\renewcommand{\a}{\alpha}
\renewcommand{\b}{\beta}
\newcommand{\g}{\gamma}
\renewcommand{\d}{\delta}
\newcommand{\D}{\Delta}
\renewcommand{\l}{\lambda}
\newcommand{\na}{\nabla}
\newcommand{\om}{\omega}
\newcommand{\OM}{\Omega}
\renewcommand{\r}{\rho}
\renewcommand{\t}{\tau}
\newcommand{\e}{\varepsilon}
\newcommand{\f}{\varphi}
\newcommand{\F}{\Phi}
\newcommand{\p}{\psi}
\renewcommand{\P}{\Psi}
\newcommand{\N}{{\mathbb N}}
\newcommand{\R}{{\mathbb R}}
\newcommand{\real}{\mathbb R}
\newcommand{\M}{\mathcal{BM}}
\newcommand{\mc}[1]{\mathcal{#1}}
\newcommand{\mbf}[1]{\mathbf{#1}}
\newcommand{\curl}{{\rm curl}\,}
\renewcommand{\div}{{\rm div}\,}
\newcommand{\pd}{\partial}
\newcommand{\supp}{\operatorname{supp\,}}
\newcommand{\loc}{\operatorname{{loc}}}
\newtheorem{theorem}{Theorem}[section]
\newtheorem{proposition}[theorem]{Proposition}
\newtheorem{lemma}[theorem]{Lemma}
\newtheorem{definition}[theorem]{Definition}
\theoremstyle{remark}
\newtheorem{remark}[theorem]{Remark}
\numberwithin{equation}{section}
\begin{document}


\title[Ideal flow around small disks]{Asymptotic behavior of 2D incompressible ideal flow around small disks}
\author[Lacave, Lopes Filho $\&$ Nussenzveig Lopes]{C. Lacave, M. C. Lopes Filho and H. J. Nussenzveig Lopes}

\address[C. Lacave]{Univ Paris Diderot, Sorbonne Paris Cit\'e, Institut de Math\'ematiques de Jussieu-Paris Rive Gauche,
UMR 7586, CNRS, Sorbonne Universit\'es, UPMC Univ Paris 06, F-75013, Paris, France.
$\&$
Univ. Grenoble Alpes, IF, F-38000 Grenoble, France\\
CNRS, IF, F-38000 Grenoble, France.}
\email{christophe.lacave@imj-prg.fr}

\address[M. C. Lopes Filho]{Instituto de Matem\'atica\\Universidade Federal do Rio de Janeiro\\Cidade Universit\'aria -- Ilha do Fund\~ao\\Caixa Postal 68530\\21941-909 Rio de Janeiro, RJ -- BRAZIL.}
\email{mlopes@im.ufrj.br}

\address[H. J. Nussenzveig Lopes]{Instituto de Matem\'atica\\Universidade Federal do Rio de Janeiro\\Cidade Universit\'aria -- Ilha do Fund\~ao\\Caixa Postal 68530\\21941-909 Rio de Janeiro, RJ -- BRAZIL.}
\email{hlopes@im.ufrj.br}

\date{\today}

\begin{abstract}
In this article, we study the homogenization limit of a family of solutions to the incompressible 2D Euler equations in the exterior of a family of $n_k$ disjoint disks with centers $\{z^k_i\}$ and radii $\varepsilon_k$. We assume that the initial velocities $u_0^k$ are smooth, divergence-free, tangent to the boundary and that they vanish at infinity. We allow, but we do not require, $n_k \to \infty$, and we assume $\varepsilon_k \to 0$ as $k\to \infty$.

Let $\gamma^k_i$ be the circulation of $u_0^k$ around the circle $\{|x-z^k_i|=\varepsilon_k\}$. We prove that the homogenization limit retains information on the circulations as a time-independent coefficient. More precisely, we assume that: (1) $\omega_0^k = \mbox{ curl }u_0^k$ has a uniform compact support and converges weakly in $L^{p_0}$, for some $p_0>2$, to $\omega_0 \in L^{p_0}_{c}(\real^2)$, (2) $\sum_{i=1}^{n_k} \gamma^k_i \delta_{z^k_i} \rightharpoonup \mu$ weak-$\ast$ in $\mathcal{BM}(\real^2)$ for some bounded Radon measure $\mu$, and (3) the radii $\varepsilon_k$ are sufficiently small. Then the corresponding solutions $u^k$ converge strongly to a weak solution $u$ of a modified Euler system in the full plane. This modified Euler system is given, in vorticity formulation, by an active scalar transport equation for the quantity $\omega=\curl u$, with initial data $\omega_0$, where the transporting velocity field is generated from $\omega$ so that its curl is $\omega + \mu$. As a byproduct, we obtain a new existence result for this modified Euler system.
\end{abstract}

\maketitle

\tableofcontents

\section{Introduction}

In this article we are concerned with the asymptotic behavior of solutions of the two-dimensional incompressible Euler equations in the exterior of a finite number of vanishingly small disks while, possibly, the number of disks simultaneously tends to infinity. For example, let us say we were interested in approximating fluid flow in the exterior of a rigid wall by a flow in the exterior of a row of small disks following the shape of the wall. This does not work very well. Fluid flow in the exterior of a rigid wall was studied by one of the authors in \cite{lac_euler}; it can be described as the flow due to a vortex sheet whose position is stuck at the wall, but with a vortex sheet strength which is time-dependent. On the other hand, as we will see, flow in the exterior of a finite number of obstacles preserves circulation around each obstacle, according to Kelvin's circulation theorem. Under appropriate hypothesis, as the size of each obstacle vanishes and the number of obstacles increases to approximate the wall, the flows converge to a vortex sheet flow whose position is stuck at the wall but with a time-independent sheet strength, which is, therefore given by the initial data. In other words, conservation of circulation around the small obstacles implies that the homogenization limit does not yield solutions of the Euler equations in the exterior of the wall, but, instead, produces solutions of a suitably modified Euler system, in the full plane. In its simplest form this modified Euler system, in vorticity formulation, is given by a transport equation for the vorticity $\omega$ by a velocity whose curl is the sum of $\omega$ and a Dirac delta supported on the wall. In fact, we will prove that, given any bounded, compactly supported, time-independent, bounded Radon measure $\mu$, the vorticity flow whose velocity is modified by $\mu$ can be approximated by an exterior domain flow, exterior to a finite number of small disks.

More precisely, for each $k = 1,2,\ldots$ we consider a family of $n_k$ disjoint disks with centers $z^k_i$ and radii $\varepsilon_k$, $i=1,\ldots,n_k$.
We denote the disks by $B(z_i^k, \e_k) := \{|x - z^k_i| < \varepsilon_k\}$ and the fluid domain by:
\[
\Omega^k := \real^2 \setminus\Big( \bigcup_{i=1}^{n_k} \overline{ B(z_i^k, \e_k)}\Big).
\]
Let $v$ be smooth, divergence-free vector field in $\Omega^k$, tangent to the boundary, with vorticity,
$w := \mbox{ curl } v$, having bounded support. In non-simply connected domains there are infinitely many vector fields whose curl is $w$. To recover the velocity $v$ from its vorticity $w$ it is necessary to introduce the circulation around each disk:
\begin{equation} \label{circulationdef}
 \g_i^k[v]:= \oint_{\partial B(z_i^k, \e_k)} v\cdot \hat\t ds.
\end{equation}
The above integral is considered in the counter-clockwise sense, hence $\hat\tau =-\hat n^\perp$, where $\hat n$ denotes the outward unit normal vector at $\partial\Omega^k$. Vorticity together with the circulations uniquely determine the velocity, see, for example, \cite{ift_lop_sueur} for a full discussion.

Given $\om_{0}^k\in C^\infty_c(\overline{\Omega^k})$ and $\{\g_i^k\}_{i=1}^{n_k}\subset \R$, the corresponding initial velocity $u_{0}^k$ is the unique smooth solution of
\begin{equation}\label{ini elliptic}
\left\lbrace\begin{aligned}
&\div u_{0}^k=0 &\text{ in }\Omega^k\\
& \curl u_{0}^k=\omega_{0}^k &\text{ in } \Omega^k\\
&u_{0}^k\cdot \hat n =0 &\text{ on }\partial \Omega^k\\
&\g^k_{i}[u_{0}^k]=\g_i^k &\text{ for }i=1,\dots,n_{k}\\
&\lim |u_0^k(x)|=0 &\text{ as }x\to \infty.
\end{aligned}\right.
\end{equation}

 The IBVP for the Euler equations in $\Omega^k$, in velocity formulation, takes the form:
\begin{equation}\label{euler-velocity}
\left\lbrace\begin{aligned}
&\pd_t u^k+u^k \cdot \na u^k=-\na p^k &\text{ in }(0,\infty) \times {\Omega^k}\\
&\div u^k =0 &\text{ in } [0,\infty) \times {\Omega^k} \\
&u^k\cdot \hat{n} =0 &\text{ in }[0,\infty) \times \partial \Omega^k\\
&\lim_{|x|\to\infty}|u^k|=0 & \text{ for }t\in[0,\infty)\\
&u^k(0,x)=u_0^k(x) &\text{ in } \Omega^k,
\end{aligned}\right.
\end{equation}
 where $p^k=p^k(t,x)$ is the pressure.
Existence and uniqueness of a smooth solution $u^k = u^k(t,x), p^k = p^k(t,x)$ of system \eqref{euler-velocity} is due to
K. Kikuchi in \cite{kiku}. We note that, by Kelvin's circulation theorem, the circulations $\gamma_{i}^k[u^k(t,\cdot)]$, $i=1,\dots,n_{k}$, as defined through \eqref{circulationdef}, are conserved quantities.

Taking the curl of the momentum equation in \eqref{euler-velocity} yields a transport equation for the corresponding vorticity $\om^k:=\curl u^k=\pd_1 u_2^k - \pd_2 u_1^k$. This transport equation, together with an elliptic system relating vorticity to velocity and (initial) circulations, comprises the vorticity formulation of the 2D Euler equations:

\begin{equation}\label{vort eq}
\left\lbrace\begin{aligned}
&\pd_t \om^k+u^k \cdot \na \om^k=0 &\text{ in }(0,\infty) \times {\Omega^k}\\
&\div u^k =0 &\text{ in } [0,\infty) \times {\Omega^k} \\
& \curl u^k=\omega^k &\text{ in } [0,\infty) \times\Omega^k\\
&u^k\cdot \hat n =0 &\text{ on }[0,\infty) \times \partial \Omega^k \\
&\g^k_{i}[u^k(t,\cdot)]=\g_i^k &\text{ for }i=1,\dots,n_{k}, \text{ for }t\in[0,\infty)\\
&\lim |u^k(t,x)|=0 &\text{ as }x\to \infty, \text{ for }t\in[0,\infty)\\
&\om^k(0,\cdot)=\om_0^k &\text{ in } \Omega^k.
\end{aligned}\right.
\end{equation}

We emphasize that $\g_i^k$ and $\omega_{0}^k$ are initial data.
We also note, in passing, that, as the transporting velocity is divergence-free, the $L^p$ norm of vorticity is a conserved quantity, for any $p\geq 1$.

Throughout we will adopt the convention that, whenever it is needed to extend $u^k(t,\cdot)$ or $\om^k(t,\cdot)$ to all of $\real^2$, they will be set to vanish in the interior of the disks $B(z_i^k, \e_k)$.

The main purpose of this article is to study the asymptotic behavior of the sequence $\{u^k\}$ when the radii of the disks tend to zero,
both when the number of disks $n_k$ is finite and independent of $k$ and when $n_k \to \infty$ as $k\to \infty$.

It is convenient to fix a scale with respect to which we consider this asymptotic limit. To this end we fix an initial compactly supported vorticity, or eddy, $\omega_0$.

We denote the space of all bounded Radon real measures on $\R^2$ by $\mathcal{BM}(\R^2)$. If $\mu\in \mathcal{BM}(\R^2)$, the total variation of $\mu$ is defined by
\[
\| \mu \|_{\mathcal{BM}}=\sup \Big\{ \int_{\R^2} \varphi \, d\mu \ \vert\ \varphi\in C_{0}(\R^2),\ \| \varphi\|_{L^\infty}\leq 1 \Big\},
\]
where $C_{0}(\R^2)$ is the set of all real-valued continuous functions on $\R^2$ vanishing at infinity. We recall that $\mathcal{BM}(\R^2)$ equipped with the total variation norm is a Banach space. We say that a sequence $(\mu_{n})$ in $\mathcal{BM}(\R^2)$ converges weak-$*$ to $\mu$ if􏰉 $\int_{\R^2}\varphi \,d\mu_{n} \to \int_{\R^2} \varphi\, d\mu$ as $n\to \infty$ for all $\varphi\in C_{0}(\R^2)$.

We assume that the sequence of measures $\sum_{i=1}^{n_k} \gamma_i^k \delta_{z_i^k}$ converges, weak-$\ast$ in the sense of measures, to a compactly supported bounded Radon measure $\mu$. This measure $\mu$ represents the homogenized limit of the circulations $\gamma_i^k$. We assume, without loss of generality, that, for every $k \in \mathbb{N}$, $\{z_i^k\}_{i=1}^{n_k}$ is a set of distinct points.

Our main theorem reads:
\begin{theorem} \label{main}
Fix $\om_0\in L^{p_0}_{c}(\R^2)$ for some $p_0 \in (2,\infty]$. Set $R_0>0$ so that $\supp \omega_{0}\subset B(0,R_{0})$. Let $\{\om_{0}^k\} \subset C^\infty_c(B(0,R_{0}))$ be such that
\[\om_{0}^k \rightharpoonup \om_{0} \text{ weakly in } L^{p_{0}}(\R^2).\]
Consider $\{ \g_i^{k}\}_{i=1\dots n_k}\subset \R$, $\{ z_i^{k}\}_{i=1\dots n_k}\subset B(0,R_{0})$ and suppose that
\[ \sum_{i=1}^{n_k} \g_i^{k} \d_{z_i^k} \rightharpoonup \mu \text{ weak-$*$ in }\ \M(\R^2), \text{ as } k \to \infty,\]
for some $\mu\in \M_c(\R^2)$.

Then there exists a subsequence, still denoted by $k$, a sequence $ \{\varepsilon_k\}\subset \R^*_+$, with $\varepsilon_k \to 0$, together with a vector field $u\in L^p_{\loc}(\R_+\times\R^2)$, for any $p\in [1,2)$, and a function $\om \in L^\infty(\R_+; L^1\cap L^{p_0}(\R^2))$,
such that the solutions $(u^{k} ,\,\om^{k})$ of the Euler equations in
\[\Omega^{k}= \R^2\setminus \Bigl(\bigcup_{i=1}^{n_k} \overline{B(z_i^k,\varepsilon_k)}\Bigl),\]
with initial vorticity $\om_{0}^k\vert_{\Omega^k}$ and initial circulations $(\g_i^{k})$ around the disks, verify
\begin{itemize}
\item[(a)] $ u^{k} \to u$ strongly in $L^p_{\loc}(\R_+\times\R^2)$ for any $p\in [1,2)$;
\item[(b)] $ {\om}^{k} \rightharpoonup {\om}$ weak $*$ in $L^\infty(\R_+; L^{q}(\R^2))$ for any $q\in [1,p_{0}]$;
\item[(c)] the limit pair $(u,{\om})$ verifies, in the sense of distributions:
\begin{equation*}
\left\lbrace\begin{aligned}
&\pd_t {\om}+u\cdot \na {\om}=0 &\text{ in } (0,\infty) \times \R^2 \\
&\div u =0 &\text{ in } [0,\infty) \times \R^2 \\
&\curl u ={\om} + \mu &\text{ in } [0,\infty) \times \R^2 \\
&\lim_{|x|\to\infty}|u|=0 & \text{ for }t\in[0,\infty)\\
&{\om}(0,x)={\om}_0(x) &\text{ in } \R^2.
\end{aligned}\right.
\end{equation*}
\end{itemize}
\end{theorem}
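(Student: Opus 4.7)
The plan is to combine a Hodge-type velocity decomposition (separating vorticity-driven and circulation-driven parts), uniform estimates adapted to the fact that the circulation-driven part is only integrable to power $p<2$, and an Aubin--Lions-style compactness argument that gives strong $L^p_{\loc}$ convergence of the velocities, enough to pass to the limit in the transport equation.

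For each $k$ and each $t$, I would write $u^k(t,\cdot) = v^k(t,\cdot) + H^k$, where $v^k(t,\cdot)$ is the unique divergence-free vector field in $\Omega^k$, tangent to $\pd\Omega^k$, vanishing at infinity, with $\curl v^k = \om^k(t,\cdot)$ and zero circulation around every disk, while $H^k$ is the time-independent harmonic field with prescribed circulation $\g_i^k$ around each $\pd B(z_i^k,\e_k)$. Time-independence of $H^k$ is exactly Kelvin's theorem. Using an explicit Green's function representation, $H^k$ can be written, up to a correction due to the multiple connectivity, as a linear combination of the elementary stream functions $(2\pi)^{-1}\log|x-z_i^k|$. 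A smallness condition on $\e_k$ (depending on $n_k$ and on the geometry of $\{z_i^k\}$) will be chosen so that, away from the disks, $v^k$ is close in $L^p_{\loc}$ to the full-plane Biot--Savart field of $\om^k$ extended by zero, and $H^k$ is close to $K_{\R^2} * \sum_i \g_i^k \d_{z_i^k}$.

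The vorticity transport equation gives $\|\om^k(t,\cdot)\|_{L^q(\Omega^k)} = \|\om_0^k\|_{L^q(\Omega^k)}$, uniformly bounded in $k$ for every $q\in[1,p_0]$ since $\om_0^k$ converges weakly in $L^{p_0}$. Hardy--Littlewood--Sobolev on the full-plane kernel bounds $v^k$ in $L^\infty_t L^\infty_{\loc,x}$ (indeed Hölder continuous in $x$, since $p_0>2$). Weak-$*$ convergence of $\sum_i \g_i^k \d_{z_i^k}$ gives a uniform total-variation bound, so $H^k$ is bounded in $L^p_{\loc}$ for every $p<2$. Time regularity is extracted from the momentum equation in a space of negative regularity, and Aubin--Lions then yields a subsequence along which $u^k \to u$ strongly in $L^p_{\loc}(\R_+\times\R^2)$ for $p<2$, $\om^k \rightharpoonup \om$ weak-$*$ in $L^\infty_t L^q_x$ for $q\in[1,p_0]$, and $H^k \to H$ in $L^p_{\loc}$. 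With $p$ slightly below $2$ and $q$ slightly below $p_0$, we have $1/p+1/q<1$, so $u^k\om^k \rightharpoonup u\om$ in distributions, and I can pass to the limit in $\pd_t \om^k + \div(u^k \om^k) = 0$ against test functions which eventually avoid the shrinking disks.

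To identify $\curl u = \om + \mu$, I extend $u^k$ by zero inside the disks, so that its distributional curl on $\R^2$ is $\om^k \mathbf{1}_{\Omega^k} + \sum_{i=1}^{n_k} \g_i^k \sigma_i^k$, where $\sigma_i^k$ is normalized arclength on $\pd B(z_i^k,\e_k)$. As $\e_k\to 0$, each $\sigma_i^k$ collapses to $\d_{z_i^k}$, and by hypothesis the full sum converges weak-$*$ to $\mu$; thus $\curl u = \om + \mu$, while divergence-free and the decay at infinity pass to the limit trivially. The most delicate step, which I expect to be the main obstacle, is the quantitative smallness condition on $\e_k$: one needs simultaneously that the exterior Biot--Savart kernel of $\Omega^k$ approximate the full-plane kernel uniformly on compacts avoiding $\bigcup_i \{z_i^k\}$, and that the multi-disk harmonic corrections to $H^k$ be negligible in the same sense. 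When $n_k\to\infty$ this is nontrivial because all $n_k$ disks contribute simultaneously, and one must control the resulting period-matrix-type coupling as the disks become numerous and small; this is presumably what forces $\e_k$ to be chosen after $n_k$ and the $\{z_i^k\}$, and is the reason why the theorem asserts only the existence of a suitable sequence $\e_k\to 0$ rather than prescribing it.
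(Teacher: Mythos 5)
Your architecture matches the paper's: the splitting of $u^k$ into a zero-circulation part driven by $\om^k$ plus the time-independent harmonic field carrying the circulations $\g_i^k$ is exactly the grouping behind the paper's decomposition into the three error terms ($\mu$ versus $\sum\g_i^k\d_{z_i^k}$; the harmonic fields $X_i^k$ versus the point-vortex fields; the exterior zero-circulation Biot--Savart operator versus the full-plane one), the conserved $L^q$ vorticity norms and the diagonal choice of $\e_k$ after $n_k$ and $\{z_i^k\}$ are the same, and the weak-strong passage to the limit in the transport equation is the same. However, the proposal leaves unproved precisely the two statements that constitute the technical core of the result, and which you yourself flag as ``the main obstacle'' without supplying an argument. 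First, the convergence $X_i^{\e}\to \frac{(x-z_i)^\perp}{2\pi|x-z_i|^2}$ in $L^p$, $p<2$: the paper proves this by building an explicit corrector $v_i^\e$ from image points $z_i^{\ast j,\e}=z_j+\e^2(z_i-z_j)/|z_i-z_j|^2$ inside the other disks, then estimating the residual $\hat v_i^\e=v_i^\e-X_i^\e$ in $L^2$ through an integration by parts against its stream function, with the stream function itself controlled by mapping the exterior domain to a bounded one via the inversion $x\mapsto x/|x|^2$ and applying the maximum principle; this yields $\|\hat v_i^\e\|_{L^2}^2\le C\e^2|\ln\e|$. Your appeal to ``an explicit Green's function representation, up to a correction due to the multiple connectivity'' names the object but gives no mechanism for showing the multiply-connected correction is small. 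Second, the comparison of the zero-circulation exterior Biot--Savart field with $K_{\R^2}[E\om^k(t,\cdot)]$ must be \emph{uniform over all} $f$ with $\|f\|_{L^1\cap L^{p_0}}\le M_0$ and compact support, because $\e_k$ is chosen before the evolved vorticity $\om^k(t,\cdot)$ is known; the paper achieves this with a second explicit corrector $v^\e[f]$ and the observation that the residual is the Leray projection of a field already shown to be $O(\e|\ln\e|)$ in $L^2$, so that orthogonality of the projection closes the estimate. Your phrase ``$v^k$ is close in $L^p_{\loc}$ to the full-plane Biot--Savart field of $\om^k$'' asserts the conclusion of this proposition rather than deriving it, and in particular the required uniformity in $t$ is not addressed.

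Two smaller points. The distributional curl of $Eu^k$ on $\pd B(z_i^k,\e_k)$ is the measure $(u^k\cdot\hat\tau)\,ds$, whose total mass is $\g_i^k$ but which is \emph{not} $\g_i^k$ times normalized arclength, and showing that the full sum of these boundary measures converges weak-$*$ to $\mu$ when $n_k\to\infty$ simultaneously with $\e_k\to 0$ again requires the quantitative estimates above; the paper sidesteps this by identifying the limit directly as $u=K_{\R^2}[\om+\mu]$ from the strong convergence of each term of the decomposition. Likewise, applying Aubin--Lions to $u^k$ via the momentum equation drags in the pressure on a $k$-dependent exterior domain; the paper instead obtains temporal compactness for $E\om^k$ from the transport equation (via Lions' lemma, in $C([0,T];L^{p_1}-w)$) and transfers it to the velocity through the Biot--Savart kernel, which avoids the pressure entirely. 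Neither of these is fatal, but the missing proofs of the two approximation propositions are a genuine gap: without them the uniform $L^p$ velocity bounds, the $L^p_{\loc}$ bound on the harmonic part, and the identification of the limit equation all remain unsubstantiated.
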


In view of this theorem, the limit behavior of $(u^k,\,\om^k)$ is described by solutions of a modification of the incompressible Euler equations: namely, the vorticity $\omega$ is transported by a divergence-free vector field $u$ which satisfies $\curl u = \om + \mu$. The additional term $\mu$ is reminiscent of the circulation around the evanescent obstacles. We emphasize that the measure $\mu$ above does not depend on time and, also, it is not necessary single-signed, differently from the work of Delort \cite{Delort}.

As a byproduct, Theorem~\ref{main} gives the existence of a global weak solution to this modified Euler problem, where the velocity is obtained by the standard Biot-Savart law of the plane applied to $\omega + \mu$, for $\mu$ a fixed, compactly supported, bounded Radon measure. Therefore, our result may be understood as an extension of the work of Marchioro \cite{marchioro}, in which the author obtained existence of global weak solution when the velocity is the standard Biot-Savart law of the plane applied to $\omega + \alpha \delta_{0}$. Indeed, given any $\mu\in\M_c(\R^2)$, it is easy to find an approximation $\sum_{i=1}^{n_k} \g_i^{k} \d_{z_i^k}$, for example by discretization on a grid.

 Uniqueness for weak solutions of this modified Euler system is an interesting open problem. In the special case where $\mu$ is a finite sum of Dirac masses, $\sum_{i=1}^{n} \gamma_{i} \delta_{z_{i}}$, and when $\omega_{0}\in L^\infty_{c}(\R^2)$, uniqueness was proved by Lacave and Miot in \cite{lac_miot} under the assumption that $\om_{0}$ is initially constant around each of the vortex points $z_{i}$. A key point of their argument was to prove that the non-constant part of $\om$ never meets the trajectories of the vortex points. Uniqueness for general bounded vorticity $\om_{0}$, not necessarily constant around $z_{i}$, is already a challenging open question. For more general $\mu$, it is not clear that a vorticity which initially vanishes around the support of $\mu$ does not intersect this support in finite time. However, we have local uniqueness, up to the time of collision (see Section~\ref{sect final}).

The asymptotic behavior of an inviscid fluid around obstacles shrinking to points was first studied by Iftimie, Lopes Filho and Nussenzveig Lopes in \cite{ift_lop_euler}, where the authors considered only one obstacle which shrinks to one point. Their result can be viewed as a special case of Theorem~\ref{main}: $n_k=1$, $z_1^k=0$, $\mu = \gamma\delta_{0}$, $\omega^k_0=\omega_0 \in C^\infty_c (\R^2\setminus \{0\})$. It was crucial, in \cite{ift_lop_euler}, that there be only one obstacle, because the authors used the explicit form of the Biot-Savart law (giving the velocity in terms of the vorticity) in the exterior of a simply connected compact set. Later, Lopes Filho \cite{milton} treated the case of several obstacles where one of them shrinks to a point. However, the fluid domain was assumed to be bounded: the use of the explicit Biot-Savart law was replaced by standard elliptic ideas in bounded domains. The initial motivation
for the present work was to understand how, and whether it was possible, to approximate ideal flow around a curve by flow outside a finite number of islands approaching the curve. This would be the special case in which $\mu$ is a Dirac supported on a curve. As it turned out, the result we obtained is much more general than we originally sought and, in particular, our original problem is not possible, as we have explained above (see Sections~\ref{sect:curve} and \ref{sect:rate} for more discussions).

From a technical point of view, our main difficulties are how to treat several obstacles without an explicit formula for the Biot-Savart law, together with issues related to unbounded domains (for example: integrability at infinity and analysis of boundary terms which arise when integrating by parts). In particular, we develop in Subsection~\ref{sect : B} (Step 2) an argument based on the inversion map $\mathbf{i}(x):=x/|x|^2$, which sends exterior domains to bounded sets, allowing us to use elliptic tools, such as the maximum principle. For simplicity, we have decided to present all the details in the case where the obstacles are disks, but it is possible to substitute the disks for a more general shape. Let $\mathcal{K}$ be a smooth, simply connected compact set containing zero and consider obstacles of the form $z_i^k+ \varepsilon_k \mathcal{K}$. Then, in \cite{BLM}, these results have been extended to these kinds of obstacles and it was shown that the limit does not depend on the shape of $\mathcal{K}$.

The remainder of this article is organized in four sections. In the next section, we recall precisely how to recover velocity from vorticity and circulation around boundary components and we introduce basic notation, to be used throughout the paper.
Section~\ref{sect u0} is dedicated to deriving uniform estimates on velocity in $L^p$, for all $p\in [1,2)$. The method we use is to construct an explicit correction of the Biot-Savart law from the formulas in the exterior of a single disk, and then to compare this with $u^k$ and with the standard Biot-Savard law in the full plane, applied to $\omega+\mu$. Comparing this correction with the Biot-Savart formula in the full plane (Step 1) will not be too hard, because we have explicit expressions. However, the comparison with $u^k$ (Step 2) will be more complicated, and will justify the use of the inversion mapping. The additional difficulty comes from the fact that we are considering non-zero circulations. Indeed, when an obstacle shrinks to a point, a Dirac mass in the curl of $u$ appears as an asymptotic limit, and the velocity associated to $\gamma \delta_z$ is of the form $\gamma \frac{(x-z)^\perp}{2\pi|x-z|^2}$, which only belongs to $L^p_{\loc}(\R^2)$ for $p<2$, but not to $L^2_{\loc}(\R^2)$. The stability of the Euler equations under Hausdorff approximation of the fluid domain is a recent result of G\'erard-Varet and Lacave \cite{GV-Lac,GV-Lac2}, but the authors considered obstacles with $H^1$ positive capacity, in order to use $L^2$ arguments (the Sobolev capacity of a material point is zero, see \cite{GV-Lac,GV-Lac2} for details). This difficulty also explains why the authors in \cite{BLM,LM} treat the case of zero circulations (see Section~\ref{sect:rate}).

In Section~\ref{sect time}, we use these $L^p$ estimates to prove our main theorem.

Finally, we collect in the last section some final comments and remarks. In particular, we prove a couple of uniqueness results for the limit problem, as mentioned above (the case where $\mu$ is a finite number of Dirac masses, and local uniqueness if the initial vorticity is supported far from $\supp \mu$). We will also discuss the case where the balls are uniformly distributed on a curve. In particular, we make rigorous the observation that the asymptotic behavior of solutions, as $n_k\to \infty$, is not a solution of the Euler equations around a curve, as described in \cite{lac_euler}. This disparity can be attributed to the fact that there is no control on the disk radii $\varepsilon_k$ in Theorem~\ref{main}. Recently, such control was investigated in \cite{BLM,LM}, in the particular case of zero circulations.

We conclude this introduction by recalling that the study of the flow through a porous medium has a long history in the homogenization framework. We refer to Cioranescu and Murat \cite{CM82} for the Laplace problem, and to Tartar \cite{Tartar80} and Allaire \cite{Allaire90a,Allaire90b} for the Stokes and Navier-Stokes equations. There are also many works concerning viscous flow through a sieve, see, e.g., \cite{conca1,conca2,Diaz99,SP1} and references therein. For a modified Euler equations (weakly non-linear), Mikeli\'c-Paoli \cite{MikelicPaoli} and Lions-Masmoudi \cite{LionsMasmoudi} also obtained a homogenized limit problem.

\medskip

{\it Notation:} we adopt the convention $(a,b)^\perp =(-b,a)$ and $\nabla^\perp f = (\nabla f)^\perp$. In the sequel, C will denote a constant independent of the underlying parameter, the value of which can possibly change from a line to another.

\section{Preliminaries about the Biot-Savart law}\label{sect : biot}

 The purpose of this section is to introduce basic notation and recall facts about the Biot-Savart law, which expresses the velocity in terms of the vorticity and circulations. The details can be found, for example, in \cite{kiku,milton,ift_lop_sueur}.

Let $\Omega$ be the exterior of $n$ disjoint disks:
\[ \Omega= \R^2 \setminus \Bigl( \bigcup_{i=1}^n \overline{ B(z_i, r_i)}\Bigl).\]

Consider also a disk with $n$ (circular) holes:
\[ \widetilde\Omega= B(z_0,r_0) \setminus \Bigl( \bigcup_{i=1}^n \overline{B(z_i,r_i)}\Bigl)\]
where $\bigcup_{i=1}^n \overline{B(z_i,r_i)}\subset B(z_0,r_0)$.

Let $U \subseteq \real^2$ be a smooth domain, which can be either $\Omega$ or $\widetilde\OM$. Given a smooth function $f$, compactly supported in $U$, and $(\g_i)\in \R^n$, we consider the following elliptic system:

\begin{equation}\label{elliptic}
\left\lbrace\begin{aligned}
&\div u=0 &\text{ in } U \\
&\curl u =f &\text{ in } U \\
&u\cdot \hat{n} =0 &\text{ on } \partial U \\
&\g_i[u]=\g_i& \text{ for }i=1,\dots,n ,
\end{aligned}\right.
\end{equation}
where $\g_i[u]$ was defined in \eqref{circulationdef}.

When $U= \Omega$ we will assume an additional condition at infinity, namely, $\lim_{|x|\to\infty}|u|=0 $.

\subsection{Harmonic part}\

We introduce families of harmonic functions called harmonic measures.
\begin{itemize}
 \item in the unbounded domains, the harmonic measures are the functions $\{\F_i\}_{i=1\dots n}$, the unique solutions of
\begin{equation*}
\left\lbrace\begin{aligned}
&\D \F_i= 0 &\text{ in }{\Omega} \\
&\F_i = \d_{i,j} &\text{ on } \pd B(z_j,r_j)\\
&\F_i \text{ has a finite limit at }\infty,&
\end{aligned}\right.
\end{equation*}(where $\d_{i,j}$ is the Kronecker delta) ;
\item in bounded domains, we introduce the functions $\{\tilde\F_i\}_{i=0\dots n}$ as the unique solutions of
\begin{equation}\label{tildePhi}
\left\lbrace\begin{aligned}
&\D \tilde\F_i= 0 &\text{ in }{\widetilde\Omega} \\
&\tilde\F_i = \d_{i,j} &\text{ on } \pd B(z_j,r_j).
\end{aligned}\right.
\end{equation}
\end{itemize}

By uniqueness of the 	Dirichlet problem for the Laplacian, we have:	
$$
\sum_{i=1}^n \F_i \equiv 1 \text{ in }\Omega, \qquad \sum_{i=0}^n \tilde\F_i \equiv 1 \text{ in }\widetilde\Omega,
$$
and for any $i$
$$ \F_i (x)\in[0,1]\ \forall x\in \Omega , \qquad \tilde\F_i (x)\in[0,1]\ \forall x\in \widetilde\Omega.
$$
Indeed,
\begin{itemize}
 \item the bounded case follows directly from the maximum principle ;
 \item in the unbounded case, let us assume that one of the disks is centered at the origin, let say $z_1=0$. Next, we use the inversion $\mathbf{i}(x):= \frac{x}{|x|^2}$ (see Lemma~\ref{disk_inversion} for its properties), which maps $\Omega$ to a bounded domain (included in $B(0,1/r_1)$) with $n-1$ obstacles. Next, we introduce $\tilde \F_{i}(x):= \F_{i}\circ \mbf{i}^{-1}(x)$ which verifies \eqref{tildePhi} in $\mbf{i}(\Omega)$ (see Lemma~\ref{disk_inversion}). Hence, the result follows from the bounded case.
\end{itemize}

In the bounded case, we will use later that $\{\nabla^\perp \tilde\F_i\}_{i=1\dots n}$ is a basis for the harmonic vector fields (i.e, vector fields which are both solenoidal and irrotational and which are tangent to the boundary). Indeed, let $H$ be a harmonic vector field, then:
\begin{itemize}
\item the divergence-free condition reads as $H(x)=\nabla^\perp \p(x)$ for some stream function $\p$;
\item the tangency condition means $\p$ is constant on each $\pd B(z_i,r_i)$ for $i=0\dots n$. We denote by $c_{i}$ these constants. As $\p$ is defined up to a constant, we can determine uniquely $\p$ by assuming $c_{0}=0$;
\item the curl-free condition implies $\Delta \p=0$ in $\widetilde \Omega$.
\end{itemize}
By uniqueness, it follows that
$$
\p = \sum_{i=1}^n c_{i} \tilde\F_{i}, \quad H = \sum_{i=1}^n c_{i} \nabla^\perp \tilde\F_{i}.
$$

\bigskip

Next, we seek a family of harmonic vector fields such that the circulation around $B(z_j,r_j)$ is $\delta_{i,j}$.

In bounded domains, we introduce the vector field $\{\tilde X_i\}_{i=1\dots n}$ as the unique solution of
\begin{equation*}
\left\lbrace\begin{aligned}
&\div \tilde X_{i} = \curl \tilde X_{i}= 0 &\text{ in }{\widetilde\Omega} \\
& \tilde X_{i}\cdot \hat n =0 &\text{ on } \partial {\widetilde\Omega}\\
&\oint_{\pd B(z_j,r_j)} \tilde X_{i} \cdot \hat \tau \, ds = \delta_{i,j} &\text{ for } j=1\dots n.
\end{aligned}\right.
\end{equation*}
It is easy to see that the family $\{ \tilde X_i\}_{i=1\dots n}$ is also a basis for the harmonic vector fields, since this family is clearly linearly
independent and we already know that the dimension of the space of harmonic vector fields is precisely $n$. Alternatively, we can also express the
$\tilde X_i$'s in terms of stream functions $\tilde X_{i}=\nabla^\perp \tilde\P_{i}$, where $\tilde\Psi_{i}$ is the unique solution of
\begin{equation*}
\left\lbrace\begin{aligned}
&\Delta \tilde\Psi_{i} = 0 &\text{ in }{\widetilde\Omega} \\
& \partial_{\tau} \tilde\Psi_{i} = 0 &\text{ on } \partial {\widetilde\Omega}\\
&\tilde \Psi_{i} = 0 &\text{ on } \partial B(z_0,r_0)\\
&\oint_{\pd B(z_j,r_j)} \partial_{n} \tilde\Psi_{i} \, ds = -\delta_{i,j} &\text{ for } j=1\dots n.
\end{aligned}\right.
\end{equation*}
The condition $\partial_{\tau} \tilde\Psi_{i} = 0$ means that $\tilde\Psi_{i}$ is constant on each $\pd B(z_i,r_i)$ for $i=0\dots n$. Then, we can uniquely determine a change of basis matrix, i.e. constants $c_{i,j}$ such that
$$
\tilde \P_{i} = \sum_{j=1}^n c_{i,j} \tilde\F_{j}, \quad \tilde X_{i} = \sum_{j=1}^n c_{i,j} \nabla^\perp \tilde\F_{j}.
$$

In unbounded domains, we introduce the vector fields $\{X_i\}_{i=1\dots n}$ as the unique solutions of
\begin{equation*}
\left\lbrace\begin{aligned}
&\div X_{i} = \curl X_{i}= 0 &\text{ in }{\Omega} \\
& X_{i}\cdot \hat n =0 &\text{ on } \partial {\Omega}\\
& |X_{i}(x)|\to 0 & \text{ when } |x|\to \infty\\
&\oint_{\pd B(z_j,r_j)} X_{i} \cdot \hat \tau \, ds = \delta_{i,j} &\text{ for } j=1\dots n.
\end{aligned}\right.
\end{equation*}
Clearly, the family $\{ X_i\}_{i=1\dots n}$ is a basis for the harmonic vector fields. In addition, we have the following asymptotic expansion by Laurent series:
\begin{equation}\label{Xi infini}
X_i (x) = \frac{1}{2\pi} \frac{x^\perp}{|x|^2}+ \mc{O}\left(\frac{1}{|x|^2}\right),\text{ as } |x|\to \infty.
\end{equation}

To reformulate this discussion in terms of stream functions, we first observe that we cannot assume that $\Psi_{i}$ goes to zero at infinity. One way to choose $\Psi_{i}$ uniquely is to assume that $c_{0,i}=0$ in the Laurent expansion
$$\Psi_{i}=\frac1{2\pi}\ln|x| + c_{0,i}+\mc{O}\Big(\frac{1}{|x|}\Big).$$
Consequently, we can reformulate our description of the space of harmonic vector fields in terms of the stream functions $X_{i}=\nabla^\perp \P_{i}$,
where, for each $i=1,\ldots,n$, $\Psi_{i}$ is the unique solution of
\begin{equation*}
\left\lbrace\begin{aligned}
&\Delta \Psi_{i} = 0 &\text{ in }{\Omega} \\
& \partial_{\tau} \Psi_{i} = 0 &\text{ on } \partial {\Omega}\\
& |\P_{i}(x) - \frac1{2\pi} \ln |x| |\to 0 & \text{ when } |x|\to \infty\\
&\oint_{\pd B(z_j,r_j)} \partial_{n} \Psi_{i} \, ds = -\delta_{i,j} &\text{ for } j=1\dots n.
\end{aligned}\right.
\end{equation*}
The stream functions $\Psi_{i}$ above satisfy:
\begin{equation}\label{Psi infini}
\P_i(x) = \frac{1}{2\pi} \ln |x|+ \mc{O}\Big(\frac{1}{|x|}\Big),\text{ as } |x|\to \infty.
\end{equation}

We remark that the precise form of the term $\mc{O}(|x|^{-1})$ above depends on the shape of the domain, so the expansions \eqref{Xi infini} and \eqref{Psi infini}, will not be uniform in $k$, when we go back to our original problem. However, these estimates will be only used to justify certain integrations by parts. Moreover, we note that $\Psi_{i}$ and $\F_{i}$ do not have the same behavior at infinity, and that $\{\nabla^\perp \F_i\}_{i=1\dots n}$ is not a basis for the harmonic vector fields in unbounded domains.

\subsection{The Laplacian-inverse}\

In this subsection, we focus on the solution of $\D \P= f$ with Dirichlet boundary condition. We denote by $\P_D = \P_D[f]$ the solution operator
of the Dirichlet Laplacian in $\Omega$. More precisely, $\psi = \P_D[f]$ is the unique solution of
\begin{equation*}
\left\lbrace\begin{aligned}
&\D \psi= f &\text{ in }{\Omega} \\
&\psi = 0 &\text{ on } \pd \Omega\\
&\nabla\psi(x) \to 0 &\text{ when } |x|\to \infty ,
\end{aligned}\right.
\end{equation*}
and we denote by $\tilde\P_D = \tilde\P_D[ f]$ the corresponding solution operator in $\widetilde\Omega$, so
that $\tilde\psi = \tilde\P_D[f]$ is the unique solution of
\begin{equation*}
\left\lbrace\begin{aligned}
&\D \tilde\psi= f &\text{ in }{\widetilde\Omega} \\
&\tilde\psi = 0 &\text{ on } \pd \widetilde\Omega.
\end{aligned}\right.
\end{equation*}

We also define
\[ K[f]:= \na^\perp \P_D[f], \qquad \tilde K[ f]:= \na^\perp \tilde \P_D[ f], \]
which are divergence free, tangent to the boundary and verify
$$
\curl K[f] =f \text{ on }\Omega, \qquad \curl\tilde K[ f] = f \text{ on }\widetilde\Omega.
$$
Moreover, we have for any $i=1\dots n$:
\begin{equation}\label{circu of K}
\oint_{\pd B(z_i,r_i)} K[f] \cdot \hat \tau \, ds = -\int_{\Omega} \F_{i}(y)f(y)\, dy, \qquad \oint_{\pd B(z_i,r_i)} \tilde K[ f] \cdot \hat \tau \, ds = -\int_{\widetilde\Omega}\tilde \F_{i}(y) f(y)\, dy.
\end{equation}

One relevant issue in the remainder of this article is the behavior at infinity of $\P_D[f]$ and of $K[f]$.
We observe that if $\supp f\subset B(0,R)$, then there exists $C_{R}$ such that, for all $|x|\geq 2R$ we have
\begin{equation}\label{psi_prop}
|\P_D[f](x)| \leq C_{R} \|f\|_{L^1}, \qquad |K[f](x)| \leq \frac{C_{R} \|f\|_{L^1}}{|x|^2}.
\end{equation}

In these inequalities, the constant $C_{R}$ depends on the size of the support of $f$ and of the domain $\Omega$. As the support of the vorticity moves, the above estimates, with $f=\omega^k(t,\cdot)$, will depend on the time. But, as remarked in the previous subsection, even if the constant in \eqref{psi_prop} depends on $t$ and $k$, it will be useful to justify certain integrations by parts, at $t$ and $k$ fixed. For a proof of \eqref{psi_prop}, see, for example, \cite{ift_lop_sueur}.

\subsection{Biot-Savart law}\label{sect BS}\

Next we introduce notation for the Biot-Savart law. If $f$ is a smooth function, compactly supported in $\overline{\Omega}$ (or supported in $\widetilde\OM$) and $(\g_i)\in \R^n$, the unique solution $u$ of the elliptic system \eqref{elliptic} is given by:
\begin{eqnarray}
&u(x)=K[f](x)+\sum_{i=1}^{n} \a_i[ f ] X_i(x) &\text{ in } \Omega \label{biot}\\
&u(x)=\tilde K[f](x)+\sum_{i=1}^{n} \tilde\a_i[ f ] \tilde X_i(x) &\text{ in }\widetilde \Omega \nonumber,
\end{eqnarray}
where
\[
\a_i[ f]= \int_{\Omega} \F_i (y) f (y)\, dy +\g_i\quad\text{and}\quad \tilde\a_i[ f]= \int_{\widetilde\Omega} \tilde\F_i (y) f (y)\, dy +\g_i,
\]
see \cite{ift_lop_sueur} and \cite{milton}.

In what follows, we use the notation $m_i[f]:=\displaystyle \int_{\Omega} \F_i(y) f(y) \, dy$. We add a superscript $k$ to all the functions defined above, in order to keep in mind the dependence on the domain. For example, the unique vector field $u_0^k$ verifying \eqref{ini elliptic} is given by:
\begin{equation*}
u^k_0(x)=K^k[\om_0^k\vert_{\Omega^k}](x)+\sum_{i=1}^{n_{k}} \a_i^k[ \om_0^k\vert_{\Omega^k} ] X_i^k(x)\quad \text{with}\quad \a_i^k[ \om_0^k\vert_{\Omega^k} ]= \int_{\Omega^k} \F_i^k (y) \om_{0}^k (y)\, dy +\g_i^k.
\end{equation*}

\bigskip

Our basic strategy is to deduce strong convergence for the velocity from \eqref{biot} and from $L^p$ estimates of the vorticity.

\section{Convergence for fixed time}\label{sect u0}

For a function $v$ defined in the domain
$$
\Omega^{\e,k}:= \R^2\setminus \Bigl(\bigcup_{i=1}^{n_k} \overline{B(z_i^k,\e)}\Bigl),
$$
we denote by $Ev$ its extension of to the plane by setting $Ev$ to vanish outside $\Omega^{\e,k}$. Let $K_{\R^2}$ be the Biot-Savart operator in the full plane. For $f \in C^{\infty}_c(\overline{\Omega^{\e,k}})$ we write:
\[ K_{\R^2}[f]=\frac{1}{2\pi} \frac{x^\perp}{|x|^2}\ast f. \]

We seek to prove that the Biot-Savart formula \eqref{biot} converges to $K_{\R^2}[\mu+ f]$ when $k \to \infty$. It is natural to study the following decomposition:
\begin{equation*}\begin{split}
K_{\R^2}[\mu+ E f\vert_{\Omega^{\e,k}}](x) - E\Bigl(K^{\e,k}[f\vert_{\Omega^{\e,k}}](x)&+\sum_{i=1}^{n_k} \a_i^{\e,k}[f] X_i^{\e,k}(x)\Bigl)\\
=&\Bigl( K_{\R^2}[\mu]-K_{\R^2}\left[\sum_{i=1}^{n_k} \g_i^k \d_{z_i^k}\right]\Bigl)(x) \\
&+\sum_{i=1}^{n_k} \g_i^k \Bigl( \frac{(x-z_i^k)^\perp}{2\pi |x-z_i^k|^2} - EX_i^{\e,k}(x)\Bigl)\\
&+\Bigl( K_{\R^2}[E f\vert_{\Omega^{\e,k}}]-EK^{\e,k}[f\vert_{\Omega^{\e,k}}]-\sum_{i=1}^{n_k} m_i^{\e,k}[f] EX_i^{\e,k}\Bigl)(x)\\
=:& A_k(x) + B_{\e,k}(x) + C_{\e,k}[f](x) .
\end{split}\end{equation*}

The goal of this section is to prove a serie of three propositions associated to this decomposition.

The first proposition is concerned with the first term, where we convert the weak-$*$ convergence in $\M_c(\R^2)$ to strong convergence of the associated velocities.
\begin{proposition} \label{A_n}
Let $R_{0}>0$ fixed. If a sequence $\{ \g_i^{k}\}_{i=1\dots n_k}\subset \R$ and $\{ z_i^{k}\}_{i=1\dots n_k}\subset B(0,R_{0})$ verifies 
\[ \sum_{i=1}^{n_k} \g_i^{k} \d_{z_i^k} \rightharpoonup \mu \text{ weak-$*$ in }\ \M(\R^2) \ \text{ as } k\to \infty\]
for some $\mu\in \M_c(\R^2)$, then for any $p\in (1,2)$ we can decompose $A_{k}$ in two parts $A_{k}=A_{k,1}+A_{k,2}$ such that
\[
\| A_{k,1}\|_{L^p(\R^2)} + \| A_{k,2}\|_{L^{p^*}(\R^2)} \to 0 \text{ as } k\to \infty,
\]
where $p^*=(\frac1p-\frac12)^{-1}\in (2,\infty)$.
\end{proposition}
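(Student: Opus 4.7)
My plan is to decompose not the output $A_k$ but the Biot--Savart kernel $K(x):=x^\perp/(2\pi|x|^2)$ according to its two incompatible integrability regimes: near the origin $K$ is locally $L^p$ only for $p<2$, while at infinity it decays like $1/|x|$ and lies in $L^{p^*}$ only for $p^*>2$. Set $\mu_k:=\sum_{i=1}^{n_k}\g_i^k\d_{z_i^k}$, so that $A_k = K*(\mu-\mu_k)$, and choose a smooth cutoff $\chi\in C^\infty_c(\R^2)$ with $\chi\equiv 1$ on $B(0,1)$ and $\supp\chi\subset B(0,2)$. Define $K^{\mathrm{n}}:=\chi K$ and $K^{\mathrm{f}}:=(1-\chi)K$. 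A direct computation in polar coordinates shows that $K^{\mathrm{n}}\in L^p(\R^2)$ for $p<2$ and $K^{\mathrm{f}}\in C^\infty(\R^2)\cap L^{p^*}(\R^2)$ for $p^*>2$, the cutoff removing the origin singularity of $K^{\mathrm{f}}$. Accordingly I set $A_{k,1}:=K^{\mathrm{n}}*(\mu-\mu_k)$ and $A_{k,2}:=K^{\mathrm{f}}*(\mu-\mu_k)$, so $A_k=A_{k,1}+A_{k,2}$. By Banach--Steinhaus applied to the weak-$\ast$ convergent sequence, $M:=\sup_k\|\mu_k\|_{\M}+\|\mu\|_{\M}<\infty$.

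The far part $A_{k,2}$ is essentially immediate. For each fixed $x$ the map $y\mapsto K^{\mathrm{f}}(x-y)$ is continuous and vanishes at infinity, hence lies in $C_0(\R^2)$, so weak-$\ast$ convergence gives $A_{k,2}(x)\to 0$ pointwise. Since $\supp\mu\cup\supp\mu_k\subset\overline{B(0,R_0)}$ and $|K^{\mathrm{f}}(z)|\leq 1/(2\pi)$ together with $|K^{\mathrm{f}}(z)|\leq 1/(2\pi|z|)$ for $|z|\geq 1$, one obtains the integrable majorant $|A_{k,2}(x)|\leq C(\1_{|x|\leq 2R_0}+|x|^{-1}\1_{|x|>2R_0})$, which belongs to $L^{p^*}(\R^2)$ for any $p^*>2$. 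Dominated convergence then yields $\|A_{k,2}\|_{L^{p^*}(\R^2)}\to 0$.

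The near part $A_{k,1}$ requires more care, since $K^{\mathrm{n}}(x-\cdot)$ is not continuous at $x$ whenever $x\in\supp(\mu-\mu_k)$ and the same $C_0$ testing argument fails. The remedy is to mollify the kernel: let $\rho_\eta$ be a standard nonnegative mollifier with $\supp\rho_\eta\subset B(0,\eta)$ and $\int\rho_\eta=1$, and write
\[
A_{k,1} = (K^{\mathrm{n}}*\rho_\eta)*(\mu-\mu_k) + (K^{\mathrm{n}}-K^{\mathrm{n}}*\rho_\eta)*(\mu-\mu_k).
\]
Young's inequality for convolution with a measure bounds the second summand in $L^p(\R^2)$ by $2M\|K^{\mathrm{n}}-K^{\mathrm{n}}*\rho_\eta\|_{L^p}$, which tends to zero as $\eta\to 0$ uniformly in $k$ by $L^p$ continuity of mollification. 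For the first summand, $K^{\mathrm{n}}*\rho_\eta\in C_c^\infty(\R^2)\subset C_0(\R^2)$ (both factors are compactly supported); weak-$\ast$ convergence of $\mu_k$ forces pointwise convergence to zero, and the uniform $L^\infty$ bound from Young combined with the compact support of the convolution yields, via dominated convergence, $L^p$ convergence to zero for each fixed $\eta$. An $\e/2$ argument, choosing $\eta$ small first to kill the second summand uniformly in $k$ and then $k$ large to kill the first, concludes $\|A_{k,1}\|_{L^p(\R^2)}\to 0$.

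The main obstacle is the structural one just exploited: $K$ lies in no single $L^q(\R^2)$, so one cannot apply Young to the full kernel, and weak-$\ast$ convergence in $\M(\R^2)$ only tests against $C_0$ functions, whereas $K$ is singular at the origin. The near/far split matches the two distinct integrability ranges $p<2<p^*$ appearing in the statement, and the mollification step is exactly what reconciles the weak-$\ast$ testing with the local singularity.
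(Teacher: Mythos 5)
Your proof is correct, but it follows a genuinely different route from the paper's. The paper works at the level of the measures themselves: from the uniform bound $\sup_k\|\mu_k\|_{\M}<\infty$ it deduces, via the compact embedding of $\M(B(0,R_0+1))$ into $W^{-1,p}(B(0,R_0+1))$ (dual to $W^{1,p'}_0\hookrightarrow\hookrightarrow C_0$), that $\mu_k-\mu\to 0$ \emph{strongly} in $W^{-1,p}$; it then writes $\mu_k-\mu=f_0^k+\partial_1 f_1^k+\partial_2 f_2^k$ with $\|f_l^k\|_{L^p}\to 0$ and maps back to velocities, putting $A_{k,2}=K_{\R^2}[f_0^k]$ into $L^{p^*}$ by Hardy--Littlewood--Sobolev and $A_{k,1}=\partial_1K_{\R^2}[f_1^k]+\partial_2K_{\R^2}[f_2^k]$ into $L^p$ by Calder\'on--Zygmund. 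You instead split the \emph{kernel} into its near and far parts, handle the far part by testing $K^{\mathrm f}(x-\cdot)\in C_0$ directly against the weak-$*$ convergence plus a dominated-convergence majorant, and reconcile the local singularity with $C_0$-testing by mollifying $K^{\mathrm n}$ and using Young's inequality for measures in an $\varepsilon/2$ argument. Your argument is more elementary (no singular-integral or fractional-integration theorems, no negative Sobolev spaces) and produces a single decomposition valid for all $p\in(1,2)$ simultaneously, at the price of being purely qualitative; the paper's route yields the quantitative bound $\|A_{k,1}\|_{L^p}+\|A_{k,2}\|_{L^{p^*}}\leq C\|\mu_k-\mu\|_{W^{-1,p}}$, though nothing in the rest of the paper actually requires that extra precision. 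Both decompositions reflect the same structural fact you identify: the Biot--Savart kernel lies in no single $L^q(\R^2)$, being $L^p$ only near its singularity and $L^{p^*}$ only at infinity.
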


The result in Proposition~\ref{A_n} fixes $k$ such that $K_{\R^2}[\mu]$ is well approximated. In the following proposition, the $k$ is fixed, and we study the limit $\e\to 0$ for the harmonic vector fields.

\begin{proposition} \label{B_n}
Let $k\in \N$ fixed and $\{ z_i^{k}\}_{i=1\dots n_k}\subset \R^2$ given.
Then, for any $i=1\dots n_k$, there exists $v_{i}^\e$ such that
\[
\Bigl\| \frac{(x-z_i)^\perp}{2\pi |x-z_i|^2} -Ev_{i}^\e(x) \Bigl\|_{L^p(\R^2)}+\|Ev_{i}^\e -EX_i^{\e,k} \Bigl\|_{L^2(\R^2)}\to 0 \text{ as } \e \to 0,
\]
for any $p\in[1,2)$.
\end{proposition}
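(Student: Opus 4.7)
The natural choice is to take $v_i^\varepsilon$ to be the Biot--Savart field of a point vortex located at $z_i^k$, namely
\[
v_i^\varepsilon(x) := \frac{(x-z_i^k)^\perp}{2\pi |x-z_i^k|^2}
\]
restricted to $\Omega^{\varepsilon,k}$. This is the unique harmonic vector field in the exterior of the single disk $B(z_i^k,\varepsilon)$ which is tangent to the boundary, has unit circulation, and decays at infinity. With this definition, the first term of the estimate is the $L^p$-norm of the point-vortex field on the union of disks: a direct computation gives a bound $C\varepsilon^{(2-p)/p}$ on $B(z_i^k,\varepsilon)$ (finite exactly because $p<2$) and $C_k\varepsilon^{2/p}$ on each $B(z_j^k,\varepsilon)$ with $j\ne i$ (where the integrand is uniformly bounded for $k$ fixed and $\varepsilon$ small). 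Both tend to zero as $\varepsilon\to 0$.

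For the second term, I set $w^\varepsilon := v_i^\varepsilon - X_i^{\varepsilon,k}$ in $\Omega^{\varepsilon,k}$. Both fields are divergence-free, curl-free, tangent to $\partial B(z_i^k,\varepsilon)$, and decay at infinity; their circulations around every $\partial B(z_j^k,\varepsilon)$ coincide (both equal $\delta_{ij}$, since $v_i^\varepsilon$ is curl-free on a neighborhood of each $B_j^k$ with $j\ne i$ once $\varepsilon$ is small enough). Hence $w^\varepsilon$ is a harmonic vector field with zero circulations, so $w^\varepsilon = \nabla\phi^\varepsilon$ for a single-valued harmonic potential; from \eqref{Xi infini} and the Laurent expansion of $v_i^\varepsilon$, the leading $|x|^{-1}$ terms cancel, so $w^\varepsilon = O(|x|^{-2})$, $\nabla\phi^\varepsilon \in L^2(\Omega^{\varepsilon,k})$, and $\phi^\varepsilon$ satisfies the Neumann problem with data $\partial_n\phi^\varepsilon = 0$ on $\partial B(z_i^k,\varepsilon)$ and $\partial_n\phi^\varepsilon = v_i^\varepsilon \cdot \hat n$ on $\partial B(z_j^k,\varepsilon)$, $j\ne i$. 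Integration by parts on $\Omega^{\varepsilon,k} \cap B(0,R)$, and then $R\to\infty$, yields
\[
\|\nabla\phi^\varepsilon\|_{L^2(\Omega^{\varepsilon,k})}^2 \;=\; -\sum_{j\ne i}\int_{\partial B(z_j^k,\varepsilon)}\phi^\varepsilon\,(v_i^\varepsilon\cdot \hat n)\,ds.
\]

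Since $v_i^\varepsilon$ is divergence-free inside each $B(z_j^k,\varepsilon)$ for $j\ne i$, the Neumann datum has zero mean on each small circle, so I may subtract the average $\overline{\phi^\varepsilon_j}$ of $\phi^\varepsilon$ over the annulus $B(z_j^k,2\varepsilon)\setminus B(z_j^k,\varepsilon)$. Cauchy--Schwarz, combined with the trivial bound $\|v_i^\varepsilon\cdot\hat n\|_{L^2(\partial B_j^k)}\le C_{k,j}\sqrt\varepsilon$ and a scaled trace-plus-Poincar\'e inequality of the form $\|\phi^\varepsilon - \overline{\phi^\varepsilon_j}\|_{L^2(\partial B_j^k)} \le C\sqrt\varepsilon\,\|\nabla\phi^\varepsilon\|_{L^2(\Omega^{\varepsilon,k})}$, closes the estimate into $\|\nabla\phi^\varepsilon\|_{L^2}\le C_k\varepsilon \to 0$. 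The main obstacle is this last inequality, namely obtaining $\varepsilon$-independent trace/Poincar\'e constants on small circles inside an \emph{unbounded} domain. As flagged in the introduction, the clean way to handle this is the inversion $\mathbf{i}(x)=x/|x|^2$: after a translation that places $0$ inside one of the disks, $\mathbf{i}$ maps $\Omega^{\varepsilon,k}$ to a bounded domain with $n_k-1$ small inclusions, and is conformal, hence preserves both harmonicity and the Dirichlet integral. On this fixed ambient geometry the desired trace and Poincar\'e constants are uniform in $\varepsilon$, and the argument closes.
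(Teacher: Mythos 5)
Your proof is correct, but it takes a genuinely different route from the paper's. The paper does \emph{not} take $v_i^\e$ to be the bare point-vortex field: it corrects it with image points $z_i^{\ast j,\e}=z_j+\e^2(z_i-z_j)/|z_i-z_j|^2$ and cut-offs, so that the corrected $v_i^\e$ is tangent to \emph{every} $\pd B_j^\e$ and the error $\hat v_i^\e=v_i^\e-X_i^\e$ is a tangent field with zero circulations but nonzero curl $\hat\om^\e$ supported on the cut-off annuli, with $\|\hat\om^\e\|_{L^1}\le C\e^2$. The energy is then bounded by $\sup_{\supp\hat\om^\e}|\p_i^\e|\cdot\|\hat\om^\e\|_{L^1}$, and the hard part is estimating $\sup|\p_i^\e|$, which is where the inversion $\mathbf{i}_\e$, the maximum principle and a coupled system of inequalities over $i$ enter, giving $\|\hat v_i^\e\|_{L^2}^2\le Cn\e^2|\ln\e|$. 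You instead accept a nonzero normal trace on the other disks, so your error field is curl-free with vanishing circulations, hence an exact gradient $\na\phi^\e$ solving a Neumann problem with $O(1)$, mean-zero data on circles of radius $\e$; Cauchy--Schwarz plus the scaled trace--Poincar\'e inequality closes the estimate with the (slightly better) rate $C_k\e$. Both arguments are valid and both yield the Proposition for fixed $k$; yours is more elementary and avoids the stream-function/maximum-principle machinery, while the paper's construction of the tangent corrector is what generalizes to non-circular obstacles (Remark~\ref{rem biholo}) and is reused in Proposition~\ref{C_n}. One remark: your closing concern about the trace/Poincar\'e constant in an unbounded domain is unfounded, so the inversion detour you sketch is unnecessary --- the inequality $\|\phi^\e-\overline{\phi^\e_j}\|_{L^2(\pd B_j^\e)}\le C\sqrt\e\,\|\na\phi^\e\|_{L^2(B(z_j,2\e)\setminus B(z_j,\e))}$ is purely local to the annulus $B(z_j,2\e)\setminus B(z_j,\e)\subset\Omega^{\e,k}$, and since the two-dimensional Dirichlet integral is scale-invariant, rescaling to the fixed annulus $B(0,2)\setminus B(0,1)$ gives a universal constant.
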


Next, we are interesting about the convergence of $C_{\e,k}$. We will see later that it is important to establish it uniformly in the $(L^1\cap L^{p_0})$-norm of $f$.
\begin{proposition} \label{C_n} Let $M_0>0$, $k\in \N$, $\{ z_i^{k}\}_{i=1\dots n_k}\subset \R^2$ fixed.
We have that
\[
\Bigl\| K_{\R^2}[f\vert_{\Omega^{\e,k}}]-EK^{\e,k}[f\vert_{\Omega^{\e,k}}]-\sum_{i=1}^{n_k} m_i^{\e,k}[f] EX_i^{\e,k}\Bigl\|_{L^2(\R^2)} \to 0 \text{ as } \e\to 0,
\]
uniformly in $f$ verifying:
\[ \| f \|_{L^1\cap L^{p_0}(\R^2)} \leq M_0 \quad \text{and} \quad f \text{ is compactly supported.}
\]
\end{proposition}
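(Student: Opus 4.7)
Set $g := f|_{\Omega^{\e,k}}$ and let $v_\e$ denote the vector field whose $L^2(\R^2)$ norm we must control,
\[
v_\e := K_{\R^2}[g] - EK^{\e,k}[g] - \sum_{i=1}^{n_k} m_i^{\e,k}[g]\,EX_i^{\e,k}.
\]
The plan is to split $\R^2 = \Omega^{\e,k}\cup\bigcup_{i=1}^{n_k}\overline{B(z_i^k,\e)}$ and estimate $v_\e$ on each piece. Inside each disk the two zero extensions vanish, so $v_\e=K_{\R^2}[g]$ there. Since $p_0>2$, a standard splitting of the Biot--Savart integral at $|x-y|=1$ (H\"older on the inner part with conjugate exponent $p_0'<2$, and $L^\infty$--$L^1$ on the outer part) yields $\|K_{\R^2}[g]\|_{L^\infty(\R^2)}\le C(p_0)\|g\|_{L^1\cap L^{p_0}}\le CM_0$, depending only on $M_0$ and $p_0$. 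Since $k$ (and hence $n_k$) is fixed, the total area of the holes is $n_k\pi\e^2$, so
\[
\|v_\e\|_{L^2(\R^2\setminus \Omega^{\e,k})} \le CM_0\sqrt{n_k\pi}\,\e \xrightarrow[\e\to 0]{} 0
\]
uniformly in $f$.

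The main work is on $\Omega^{\e,k}$. A direct computation shows that $v_\e|_{\Omega^{\e,k}}$ is divergence-free, curl-free, has normal trace $K_{\R^2}[g]\cdot\hat n$ on $\pd\Omega^{\e,k}$ (since $K^{\e,k}[g]$ and the $X_i^{\e,k}$ are tangent), and has all circulations $\g_i[v_\e]$ equal to zero: the contribution of $K_{\R^2}[g]$ gives $\int_{B(z_i^k,\e)}g = 0$ because $g$ vanishes on the disks, while the other two terms cancel thanks to \eqref{circu of K} and $\g_j[X_i^{\e,k}]=\d_{ij}$. Since all $n_k$ circulations vanish, $v_\e = \na\phi_\e$ for some $\phi_\e$ harmonic in $\Omega^{\e,k}$, decaying to zero at infinity, and satisfying the Neumann condition $\pd_n\phi_\e = K_{\R^2}[g]\cdot\hat n$ on $\pd\Omega^{\e,k}$. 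The Neumann compatibility on each circle follows from $\div K_{\R^2}[g]=0$ via the divergence theorem.

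Integration by parts then yields the energy identity
\[
\|\na\phi_\e\|_{L^2(\Omega^{\e,k})}^2 = \int_{\pd\Omega^{\e,k}}\phi_\e\,(K_{\R^2}[g]\cdot\hat n)\,ds,
\]
which I would bound disk by disk. In the exterior of a single disk of radius $\e$, a Fourier decomposition in polar coordinates solves the Neumann problem explicitly (the series for $\phi_\e$ carries an extra factor $\e^2/n$ relative to the series for $\pd_n\phi_\e$) and yields $\|\na\phi_\e\|_{L^2}^2\le \e\,\|K_{\R^2}[g]\cdot\hat n\|_{L^2(\pd B)}^2\le CM_0^2\e^2$, using the uniform bound on $\|K_{\R^2}[g]\|_{L^\infty}$ established above. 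For the multi-disk geometry, once $k$ is fixed the minimal separation $d_k:=\min_{i\ne j}|z_i^k-z_j^k|>0$ is fixed, so for $\e<d_k/4$ a localisation/cut-off argument (or alternatively the inversion $\mbf{i}$ described in the introduction, which sends $\Omega^{\e,k}$ to a bounded domain where elliptic tools such as the maximum principle apply) decouples the contributions of different disks and produces $\|\na\phi_\e\|_{L^2(\Omega^{\e,k})}^2\le C_k M_0^2\e^2$, with $C_k$ depending only on $n_k$ and $d_k$. Combining with the disk contribution proves the proposition; every constant depends on $f$ only through $M_0$ and $p_0$, giving the stated uniformity.

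The main obstacle is this final Neumann energy estimate. Two features make it nontrivial: the unbounded geometry rules out a naive Poincar\'e inequality for $\phi_\e$, and the multiple small holes interact, so one cannot simply sum single-disk estimates without carefully controlling cross terms. The key structural inputs one must exploit are that $K_{\R^2}[g]\cdot\hat n$ has mean zero on each circle (no logarithmic mode in $\phi_\e$), that $\phi_\e$ decays at infinity, and that the centres $\{z_i^k\}_{i=1}^{n_k}$ are fixed and well-separated at each $k$.
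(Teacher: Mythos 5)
Your route is genuinely different from the paper's, and it is viable. The paper builds an explicit corrector $v^\e[f]$ out of the Biot--Savart kernel and image points $y^{\ast i,\e}$ (Lemma~\ref{lem correction2}), proves by direct computation that $\|K_{\R^2}[f\vert_{\Omega^\e}]-Ev^\e[f]\|_{L^2}\le CM_0\e|\ln\e|$, and then disposes of the remaining difference in one line by observing that $\hat v^\e[f]=v^\e[f]-K^\e[f\vert_{\Omega^\e}]-\sum_i m_i^\e[f]X_i^\e$ is the Leray projection of $v^\e[f]-K_{\R^2}[f\vert_{\Omega^\e}]$, so orthogonality in $L^2$ gives the same bound. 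You instead attack the difference directly: you correctly identify that it is curl-free and divergence-free in $\Omega^{\e,k}$ with all circulations zero (the cancellation $m_j^{\e,k}[f]$ against $-\int\F_j f$ from \eqref{circu of K} is exactly right), hence a single-valued harmonic potential solving an exterior Neumann problem with data $K_{\R^2}[f\vert_{\Omega^{\e,k}}]\cdot\hat n$ on small circles; you then run an energy estimate. Your approach buys a slightly better rate ($O(\e)$ versus $O(\e|\ln\e|)$) and avoids constructing the corrector, at the cost of having to estimate a Neumann problem in a perforated exterior domain; the paper's Leray-projection trick avoids all boundary-value analysis but requires the explicit corrector of Step 1. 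Both arguments are intrinsically $L^2$ (yours via the energy identity, theirs via orthogonality of the projection), which is consistent with the paper's remark that this mechanism fails for the harmonic part.

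The one place where your write-up falls short of a proof is the multi-disk energy estimate, which you assert via an unspecified ``localisation/cut-off argument.'' It can be completed cleanly, and with the ingredients you already listed: since $\int_{\pd B(z_j^k,\e)}K_{\R^2}[f\vert_{\Omega^{\e,k}}]\cdot\hat n\,ds=0$, you may replace $\phi_\e$ by $\phi_\e-\bar\phi_j$ on each circle, where $\bar\phi_j$ is the mean of $\phi_\e$ over the annulus $A_j:=B(z_j^k,2\e)\setminus B(z_j^k,\e)$. The scaled trace inequality combined with the Poincar\'e inequality on $A_j$ (both constants scale correctly in two dimensions) gives $\|\phi_\e-\bar\phi_j\|_{L^2(\pd B(z_j^k,\e))}\le C\e^{1/2}\|\na\phi_\e\|_{L^2(A_j)}$, while $\|\pd_n\phi_\e\|_{L^2(\pd B(z_j^k,\e))}\le CM_0\e^{1/2}$ by your $L^\infty$ bound. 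Summing over $j$ and using Cauchy--Schwarz yields $\|\na\phi_\e\|^2_{L^2(\Omega^{\e,k})}\le C\sqrt{n_k}\,M_0\,\e\,\|\na\phi_\e\|_{L^2(\Omega^{\e,k})}$, which can be absorbed. Two further points you should make explicit: the integration by parts at infinity is legitimate because the monopole terms cancel ($\sum_i m_i^{\e,k}[f]=\int f\vert_{\Omega^{\e,k}}$ since $\sum_i\F_i^{\e,k}\equiv1$), so the difference is $O(|x|^{-2})$ and its potential is bounded at infinity; and the uniformity in $f$ holds because every constant above depends on $f$ only through $M_0$ and $p_0$. With these additions your argument is a complete and correct alternative proof.
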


In the last part of this section, we will construct $\varepsilon_k$ such that Theorem~\ref{main} can be proved.

\subsection{The number of obstacles (proof of Proposition~\ref{A_n})}\label{number_obstacle}\

Since $\{ z_i^{k}\}_{i=1\dots n_k}\subset B(0,R_{0})$, we have that $\supp \mu \subset \overline{B(0,R_{0})}$. From the weak-$*$ convergence
\[ \sum_{i=1}^{n_k} \g_i^{k} \d_{z_i^k} \rightharpoonup \mu \text{ weak-$*$ in }\ \M(\R^2) \quad \text{when } k\to \infty,\]
we infer that $\| \sum_{i=1}^{n_k} \g_i^{k} \d_{z_i^k} \|_{\M(\R^2)}$ is uniformly bounded in $k$ (see \cite[Prop. 3.13]{Brezis}, which is a consequence of the Banach-Steinhaus theorem). We have, by duality, that $\M(B(0,R_{0}+1))$ is compactly imbedded in $W^{-1,p}(B(0,R_{0}+1))$ for any $p\in (1,2)$, since $W^{1,p'}_0(B(0,R_0+1))$ is compactly imbedded in $C_{0}(B(0,R_0+1))$, where $p'=p/(p-1)$. Hence we get that
\[ \sum_{i=1}^{n_k} \g_i^{k} \d_{z_i^k} - \mu \to 0 \text{ strongly in }\ W^{-1,p}(B(0,R_{0}+1)) \quad \text{when } k\to \infty,\]
for any $p\in (1,2)$. Let us fix $p\in (1,2)$. We recall that a distribution $h$ in $W^{-1,p}(\OM)$ can be decomposed as $h=f_{0}+ \partial_{1} f_{1}+\partial_{2} f_{2}$, where $f_0$, $f_{1}$ and $f_{2}$ are in $L^p(\OM)$ and $\max_{l=0\dots 2} \| f_{l}\|_{L^{p}(\OM)} \leq \|h\|_{W^{-1,p}(\OM)}$(see, for example, \cite[Prop. 9.20]{Brezis} for this decomposition). Consequently, there exists $f_{0}^k$, $f_{1}^k$ and $f_{2}^k$ belonging in $L^{p}(\R^2)$ such that
\[
\sum_{i=1}^{n_k} \g_i^{k} \d_{z_i^k} - \mu = f_{0}^k+\partial_{1} f_{1}^k + \partial_{2} f_{2}^k \quad \text{ in } \R^2,
\]
with $\max_{l=0,\,1,\, 2} \| f_{l}^k\|_{L^{p}(\R^2)} \to 0$.

Now, we apply the Biot-Savart law in $\R^2$:
\begin{equation}\label{BS BM}
K_{\R^2}[\sum_{i=1}^{n_k} \g_i^k \d_{z_i^k}]-K_{\R^2}[\mu]= K_{\R^2}[f_{0}^k]+ \partial_{1} K_{\R^2}[f_{1}^k] + \partial_{2} K_{\R^2}[f_{2}^k].
\end{equation}
For the first right hand side term, we apply the Hardy-Littlewood-Sobolev Theorem (see e.g. \cite[Theo. V.1]{Stein} with $\alpha=1$) where $\frac1{p}=\frac1{p^*}+\frac12$ to get:
\[
\| K_{\R^2}[f_{0}^k] \|_{L^{p^*}(\R^2)} \leq C_{p} \|f_{0}^k\|_{L^{p}(\R^2)}.
\]
This inequality holds for $p\in (1,2)$, and $p^*$ belongs to $(2,\infty)$.

Concerning the other terms in \eqref{BS BM}, we infer from the Calder\'on-Zygmund inequality that
\begin{eqnarray*}
 \| \partial_{1} K_{\R^2}[f_{1}^k] + \partial_{2} K_{\R^2}[f_{2}^k] \|_{L^p(\R^2)}
&\leq& \| \nabla K_{\R^2}[f_{1}^k] \|_{L^p(\R^2)}+ \| \nabla K_{\R^2}[f_{2}^k] \|_{L^p(\R^2)}\\
&\leq& C_{p} (\|f_{1}^k\|_{L^p(\R^2)}+\|f_{2}^k\|_{L^p(\R^2)}).
\end{eqnarray*}

Setting $A_{k,1}:=\partial_{1} K_{\R^2}[f_{1}^k] + \partial_{2} K_{\R^2}[f_{2}^k]$ and $A_{k,2}:=K_{\R^2}[f_{0}^k]$ ends the proof of Proposition~\ref{A_n}.

\subsection{The harmonic part (proof of Proposition~\ref{B_n})} \label{sect : B}\

In this subsection $k$ is fixed, so to simplify the notations we will omit the parameter $k$ in all the functions and domains. As $n_k$ is fixed, then we study the behavior of the flow around $n(=n_k)$ obstacles $\{B(z_i, \e)\}_{i=1\dots n}$ which shrink to $n$ points as $\e\to 0$. We denote by $\r=\min_{i\neq j} |z_i-z_{j}|$ and $B_i^\e := B(z_i,\e)$ with $\e<\r/2$. Let $i$ be fixed, the goal of this subsection is to compare
\[ \frac{1}{2\pi}\frac{(x-z_i)^\perp}{|x-z_i|^2} \quad \text{and} \quad EX_i^\e .
\]

The first vector field is not tangent to $B_j^\e$ for $j\neq i$, whereas $X_i^\e$ is. We introduce a vector field $v_{i}^\e$ which has an explicit form and verifies some of the properties of $X_i^\e$.
If we denote by $\f:[0,\infty)\to[0,\infty) $ a non-increasing function such that $\f(s)=1$ if $s<\r/4$ and $\f(s)=0$ if $s>\r/2$, we introduce cut-off functions: $\f_j(x):= \f(|x-z_j|)$ which verifies
\begin{equation}\label{cut_off}
\left\lbrace\begin{aligned}
&\f_j(x) \equiv 1 &\text{ in } B(z_j,\r/4) \\
&\f_j(x) \equiv 0 &\text{ in } B(z_j,\r/2)^c.
\end{aligned}\right.
\end{equation}
We use the inversion with respect to $B_j^\e$ to define, for each $i \neq j$,
\begin{equation}\label{z ast}
z_i^{\ast j,\e}:= z_j+\e^2\frac{z_i-z_j}{|z_i-z_j|^2}\in B(z_{j},\varepsilon),
\end{equation}
which allows us to introduce the vector fields $v_i^\e$ by:
\begin{equation}\label{v}
v_i^\e(x):= \frac{1}{2\pi}\frac{(x-z_i)^\perp}{|x-z_i|^2}-\na^\perp \Bigl(\sum_{j\neq i} \frac{\f_j(x)}{2\pi}\ln \frac{|x-z_i^{\ast j,\e}|}{|x-z_j|} \Bigl).
\end{equation}
The properties of such a vector field are listed here:
\begin{lemma}\label{lem correction}
We have that:
\begin{enumerate}
\item $v_{i}^\e$ is divergence free ;
\item $v_{i}^\e$ is tangent to the boundary $\partial \Omega^\varepsilon$ ;
\item the circulations of $v_{i}^\e$ around $B_j^\e$ are equal to $\delta_{i,j}$, for all $j=1\dots n$ ;
\item $\displaystyle \curl v_{i}^\e(x)= - \frac1{2\pi}\sum_{j\neq i} \ln \frac{|x-z_i^{\ast j,\e}|}{|x-z_j|} \Delta \f_j(x)- \frac1{\pi}\sum_{j\neq i} \Bigl(\frac{x-z_i^{\ast j,\e}}{|x-z_i^{\ast j,\e}|^2}- \frac{x-z_j}{|x-z_j|^2}\Bigl)\cdot \nabla \f_j(x)$ in $\Omega^\varepsilon$.
\end{enumerate}
\end{lemma}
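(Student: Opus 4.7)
The plan is to exploit the stream-function form of $v_i^\e$. Observe that the first summand $\frac1{2\pi}\frac{(x-z_i)^\perp}{|x-z_i|^2}$ equals $\nabla^\perp\bigl(\frac1{2\pi}\ln|x-z_i|\bigr)$, so $v_i^\e=\nabla^\perp\psi_i^\e$ with
\[
\psi_i^\e(x)=\frac{1}{2\pi}\ln|x-z_i|-\sum_{j\neq i}\frac{\f_j(x)}{2\pi}\ln\frac{|x-z_i^{\ast j,\e}|}{|x-z_j|}.
\]
Assertion (1) is then immediate, since $\operatorname{div}\nabla^\perp\psi_i^\e\equiv0$.

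For (2) the tangency condition $v_i^\e\cdot\hat n=0$ on $\pd B_l^\e$ is equivalent to $\psi_i^\e$ being constant on each $\pd B_l^\e$. Using $\e<\r/2$ and $\min_{l\neq j}|z_l-z_j|\geq \r$, on $\pd B_l^\e$ one has $|x-z_j|\geq\r-\e>\r/2$ for every $j\neq l$, hence $\f_j(x)=0$ for $j\neq l$ by \eqref{cut_off}. In particular, on $\pd B_i^\e$ every term of the sum vanishes and $\psi_i^\e(x)=\frac{1}{2\pi}\ln\e$ is constant. On $\pd B_j^\e$ for $j\neq i$ only the $j$-th term survives and $\f_j=1$, so
\[
\psi_i^\e(x)=\frac{1}{2\pi}\ln|x-z_i|-\frac{1}{2\pi}\ln\frac{|x-z_i^{\ast j,\e}|}{|x-z_j|}.
\]
The key identity is the standard property of inversion with respect to $\pd B_j^\e$: a direct calculation with $w=x-z_j$ and $v=z_i-z_j$ shows that when $|w|=\e$,
\[
|x-z_i^{\ast j,\e}|=\frac{\e}{|z_i-z_j|}|x-z_i|.
\]
Plugging this into the formula yields $\psi_i^\e(x)=\frac{1}{2\pi}\ln|z_i-z_j|$, again a constant. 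This is the main computational content of the lemma; everything else follows without surprises.

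For (3) I would use the previous observations to write $v_i^\e$ explicitly near each boundary component. On a neighborhood of $\pd B_i^\e$ all $\f_j$ with $j\neq i$ vanish, so $v_i^\e=\frac{1}{2\pi}\frac{(x-z_i)^\perp}{|x-z_i|^2}$; a standard parametrization gives circulation $1=\d_{i,i}$. On a neighborhood of $\pd B_l^\e$ with $l\neq i$ only the $l$-th term is active and $\f_l\equiv 1$, so
\[
v_i^\e(x)=\frac{1}{2\pi}\frac{(x-z_i)^\perp}{|x-z_i|^2}-\frac{1}{2\pi}\frac{(x-z_i^{\ast l,\e})^\perp}{|x-z_i^{\ast l,\e}|^2}+\frac{1}{2\pi}\frac{(x-z_l)^\perp}{|x-z_l|^2}.
\]
Since $|z_i^{\ast l,\e}-z_l|=\e^2/|z_i-z_l|<\e$, the point $z_i^{\ast l,\e}$ lies inside $B_l^\e$, while $z_i$ lies outside. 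Computing each elementary circulation (by Stokes' theorem on the disk $B_l^\e$ or by direct calculation on the bounding circle) gives $0-1+1=0=\d_{i,l}$.

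Finally, for (4), since $\frac{1}{2\pi}\ln|x-z_i|$ is harmonic in $\OM^\e$, we have $\curl v_i^\e=\Delta\psi_i^\e=-\sum_{j\neq i}\Delta(\f_j g_j)$, where $g_j(x):=\frac{1}{2\pi}\ln\frac{|x-z_i^{\ast j,\e}|}{|x-z_j|}$. Expanding the Leibniz formula
\[
\Delta(\f_j g_j)=g_j\,\Delta\f_j+2\nabla\f_j\cdot\nabla g_j+\f_j\,\Delta g_j,
\]
and using that $g_j$ is harmonic in $\OM^\e$ (its two logarithmic singularities $z_j$ and $z_i^{\ast j,\e}$ both lie inside the hole $B_j^\e$), together with $\nabla g_j=\frac{1}{2\pi}\bigl(\frac{x-z_i^{\ast j,\e}}{|x-z_i^{\ast j,\e}|^2}-\frac{x-z_j}{|x-z_j|^2}\bigr)$, produces exactly the stated expression for $\curl v_i^\e$.
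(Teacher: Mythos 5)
Your proof is correct and follows essentially the same route as the paper: both arguments rest on writing $v_i^\e$ as a perpendicular gradient, on the explicit local form of $v_i^\e$ near each boundary component, on the inversion identity $|x-z_i^{\ast j,\e}|=\tfrac{\e}{|z_i-z_j|}|x-z_i|$ on $\pd B_j^\e$ (which the paper states without derivation and you verify), and on counting which logarithmic singularities lie inside each hole to compute the circulations and the curl. The only cosmetic difference is that you phrase the tangency condition as constancy of the stream function on each $\pd B_l^\e$, while the paper says equivalently that the normal component of the perpendicular gradient vanishes there.
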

\begin{proof}
The first point is obvious, because this vector field is a perpendicular gradient. The last point is just a basic computation, noting that $\Delta \frac1{2\pi}\ln |x-P| = \delta_{P}$ (where $\delta_{P}$ denotes the Dirac mass at the point $P$) and that $z_i^{\ast j,\e}\in B(z_{j},\e)$, so $\Delta \ln \frac{|x-z_i^{\ast j,\e}|}{|x-z_j|}$ is equal to zero outside $B(z_{j},\e)$.

Concerning (2) and (3), we note that $v_{i}^\e$ is equal to
$$\frac1{2\pi} \frac{(x-z_i)^\perp}{|x-z_i|^2}$$
in a neighborhood of $B_{i}^\e$ (namely for any $x\in B(x_{i}, \rho/4)\setminus B(x_{i},\e)$), whereas it is equal to
$$
 \frac{1}{2\pi}\Bigl(\frac{x-z_i}{|x-z_i|^2}- \frac{x-z_i^{\ast j,\e}}{|x-z_i^{\ast j,\e}|^2} + \frac{x-z_j}{|x-z_j|^2} \Bigl)^\perp= -\frac{1}{2\pi} \na^\perp \ln \frac{|x-z_i^{\ast j,\e}|}{|x-z_i| |x-z_j|}
$$
in a neighborhood of $B_{j}^\e$ for $j\neq i$ (namely for any $x\in B(x_{j}, \rho/4)\setminus B(x_{j},\e)$). With the first form, it is clear that $v_{i}^\e$ is tangent to $B_{i}^\e$ and that
$$
\oint_{\partial B_{i}^\e} v_{i}^\e \cdot \hat \tau \, ds = 1.
$$
Thanks to the definition of $z_i^{\ast j,\e}$ \eqref{z ast}, we can easily prove that
\[
\frac{|x-z_i^{\ast j,\e}|}{|x-z_i| |x-z_j|} =\frac1{|z_{i}-z_{j}|}, \quad \forall x \in \partial B_{j}^\e,
\]
hence we deduce that the normal component of the perpendicular gradient is equal to zero, i.e. $v_{i}^\e$ is tangent to $B_{j}^\e$. Moreover, there exists $r>0$ such that $B(z_i^{\ast j,\e},r)\Subset B(z_{j},\varepsilon)$, so we get by Stokes formula that:
\begin{align*}
 \oint_{\partial B_{j}^\e} v_{i}^\e \cdot \hat \tau \, ds
&= \frac{1}{2\pi}\int_{B_{j}^\e} \curl \frac{(x-z_i)^\perp}{|x-z_i|^2} \, dx + \frac{1}{2\pi}\oint_{\partial B_{j}^\e} \frac{(x-z_j)^\perp}{|x-z_j|^2}\cdot \hat \tau \, ds + \frac{1}{2\pi}\oint_{\partial B(z_i^{\ast j,\e},r)} \frac{(x-z_i^{\ast j,\e})^\perp}{|x-z_i^{\ast j,\e}|^2}\cdot \hat \tau \, ds\\
&= 0+1-1=0,
\end{align*}
which ends the proof of the lemma.
\end{proof}

\begin{remark} \label{rem biholo}
The fact that the obstacles are disks simplifies the expression of $v_i^\e$. Otherwise, we would have to consider $T_j^\e$ the biholomorphism between $(\mathcal{K}_j^\e-z_j)^c$ and the exterior of the unit disk, and we would define:
\begin{equation*}
v_i^\e(x) := \frac{1}{2\pi}\frac{(x-z_i)^\perp}{|x-z_i|^2}+\na^\perp \Bigl(\sum_{j\neq i} \f_j\frac{1}{2\pi}\ln \frac{|T_j^\e(x-z_j)-T_j^\e(z_i-z_j)| |T_j^\e(x-z_j)|}{\frac{|x-z_{i}|}{\e} |T_j^\e(x-z_j)-\frac{T_j^\e(z_i-z_j)}{|T_j^\e(z_i-z_j)|^2}|}\Bigl).
\end{equation*}
and we would have to prove that the modified $v_i^{\e}$ behaves as in \eqref{v} because $T_j^\e(x)= \b_j^\e \dfrac{x}\e+h(\dfrac{x}\e)$ with $h'(x)=\mathcal{O}(\frac{1}{|x|^2})$ at infinity.

For the sake of simplicity, we will work with the disks (see \cite{BLM,ift_lop_euler,LM}, where an extension to more general domains was considered).
\end{remark}

To prove Proposition~\ref{B_n}, we write:
\begin{equation} \label{harm_decompo}
\frac{1}{2\pi}\frac{(x-z_i)^\perp}{|x-z_i|^2} - EX_i^\e=\Bigl(\frac{1}{2\pi}\frac{(x-z_i)^\perp}{|x-z_i|^2}-Ev_i^\e\Bigl)+E\Bigl(v_i^\e- X_i^\e\Bigl).
\end{equation}

\bigskip

{\bf Step 1: Convergence of the first term on the right hand side of \eqref{harm_decompo}.}

Let us fix $p\in [1,2)$. As this term is explicit, we can compute
\begin{eqnarray*}
\frac{1}{2\pi}\frac{(x-z_i)^\perp}{|x-z_i|^2}-Ev_i^\e(x)&=&(1-E)\frac{1}{2\pi}\frac{(x-z_i)^\perp}{|x-z_i|^2}\\
&& + \sum_{j\neq i} (\na^\perp \f_j)(x)\frac{1}{2\pi}\ln \frac{|x-z_i^{\ast j,\e}|}{|x-z_j|}\\
&&+ E\sum_{j\neq i} \f_j(x)\frac{1}{2\pi}\Bigl( \frac{(x-z_i^{\ast j,\e})^\perp}{|x-z_i^{\ast j,\e}|^2}-\frac{(x-z_j)^\perp}{|x-z_j|^2}\Bigl).
\end{eqnarray*}

This first term above tends to zero strongly in $L^p$ because the map $x\mapsto 1/|x|$ belongs to $L^q_{\loc}$ for $q\in (p,2)$ and the Lebesgue measure of the support of $1-E$ is equal to $\pi n \e^2$.

The second term is a bounded function, compactly supported, which tends to zero pointwise in $\R^2$, because $z_i^{\ast j,\e}\to z_{j}$ as $\varepsilon\to 0$. By the dominated convergence theorem, we conclude that it tends to zero in $L^p$.

Concerning the last term, we remark that it also tends to zero pointwise in $\R^2\setminus \Bigl( \cup_{j\neq i} \{ z_j\}\Bigl)$.
By the convexity of $g:\ t\mapsto |t|^p$ (for $p>1$), we use that
\[ g\left(\frac{\sum \l_i x_i}{\sum \l_i}\right) \leq \frac{\sum \l_i g(x_i)}{\sum \l_i}, \]
for $\l_i=1$ to compute
\begin{equation*}\begin{split}
\Bigl| E\sum_{j\neq i} \f_j(x)\frac{1}{2\pi}\Bigl( \frac{(x-z_i^{\ast j,\e})^\perp}{|x-z_i^{\ast j,\e}|^2}&-\frac{(x-z_j)^\perp}{|x-z_j|^2}\Bigl) \Bigl|^p \\
\leq& (2n-2)^{p-1}E\sum_{j\neq i}\frac{ \f_j^p(x)}{(2\pi)^p}\Bigl( \frac{1}{|x-z_i^{\ast j,\e}|^p}+\frac{1}{|x-z_j|^p}\Bigl)\\
\leq& (2n-2)^{p-1}\sum_{j\neq i}\frac{1}{(2\pi)^p}\Bigl( \frac{\b(|x-z_i^{\ast j,\e}|)}{|x-z_i^{\ast j,\e}|^p}+\frac{\b(|x-z_j|)}{|x-z_j|^p}\Bigl) =:g_{\e}(x)
\end{split}\end{equation*}
where $\b\in C^\infty(\R_+,\R_+)$ such that $\b(s)=1$ if $s<R$ and $\b(s)=0$ if $s>R+1$, with $R$ big enough (e.g. $R=\rho$ holds). We have that $g_{\e}\to g$ pointwise in $\R^2\setminus \Bigl(\cup_{j\neq i} \{ z_j\}\Bigl)$, where
\[g(x)= (2n-2)^{p-1}\sum_{j\neq i}\frac{2}{(2\pi)^p}\frac{\b(|x-z_j|)}{|x-z_j|^p}.\]
As $p<2$, we note that $g_{\e}$ and $g$ are integrable and
\[\int g_\e= \left(\frac{n-1}{\pi}\right)^p \int_{\R^2} \frac{\b(|x|)}{|x|^p}\, dx = \int g, \]
which allows us to apply the generalized dominated convergence theorem (see \cite{royden}) to get the convergence for the first term on the right hand side in \eqref{harm_decompo}:
\begin{equation}\label{harmonic first limit}
\frac{1}{2\pi}\frac{(x-z_i)^\perp}{|x-z_i|^2}-Ev_i^\e \to 0 \text{ strongly in } L^p(\R^2) \text{ when }\e \to 0.
\end{equation}

\bigskip

{\bf Step 2: convergence of the second term on the right hand side of \eqref{harm_decompo}.}

If we denote $\hat v_i^\e := v_i^\e-X_i^\e$, we deduce from Lemma~\ref{lem correction} that:
\begin{equation}\label{tilde_v}
\left\lbrace\begin{aligned}
&\div \hat v_i^\e = 0 &\text{ in } \Omega^\e\\
&\curl \hat v_i^\e = \hat \om^\e &\text{ in } \Omega^\e\\
&\hat v_i^\e \cdot \hat n =0 &\text{ on } \pd \Omega^\e\\
&\oint_{\pd B^\e_j} \hat v_i^\e \cdot \hat\t\, ds = 0 & \text{ for all }j=1\dots n
\end{aligned}\right.
\end{equation}
where
\[\hat \om^\e (x)= -\frac{1}{2\pi}\sum_{j\neq i} \left[ \D \f_j(x) \ln \frac{|x-z_i^{\ast j,\e}|}{|x-z_j|}+2\na \f_j(x)\cdot \Bigl( \frac{x-z_i^{\ast j,\e}}{|x-z_i^{\ast j,\e}|^2}-\frac{x-z_j}{|x-z_j|^2}\Bigl) \right].\]
Thanks to \eqref{Xi infini}, we note that $\hat v_i^\e(x)=\mathcal{O}(\frac1{|x|^2})$ at infinity, therefore,
\[ \int_{\Omega^\e} \hat \om^\e =-\sum_{j=1}^n \oint_{\partial \Omega^\e_j} \hat v_i^\e \cdot \hat\t\, ds + \lim_{R\to \infty} \oint_{\pd B(0,R)} \hat v_i^\e \cdot \hat\t\, ds= 0.\]

One of the main ideas in this proof is to perform an integration by parts in order to get the following.

\begin{lemma}\label{general_lemma}
Let $\hat v_i^\e$ be a vector field which verifies System \eqref{tilde_v} and such that $\int \hat \om^\e=0$. If we denote by $\p_i^\e$ any stream function of $\hat v_i^\e$ (i.e. $\hat v_i^\e = \na^\perp \p_i^\e$), then we have:
\[ \|\hat v_i^\e\|^2_{L^2(\Omega^\e)} \leq \left(\sup_{\supp(\hat \om^\e)} |\p_i^\e| \right)\ \|\hat \om^\e\|_{L^1(\Omega^\e)}.\]
\end{lemma}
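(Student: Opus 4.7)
The plan is to test the Poisson equation $\Delta\psi_i^\e = \hat\om^\e$ against $\psi_i^\e$ on the truncated domain $\Omega^\e \cap B(0,R)$ via Green's identity, and then send $R \to \infty$, aiming for the identity
$$\|\hat v_i^\e\|_{L^2(\Omega^\e)}^2 = -\int_{\Omega^\e} \psi_i^\e\, \hat\om^\e \, dx.$$
Once this is in hand, the bound $\bigl|\int \psi_i^\e \hat\om^\e\, dx\bigr| \le (\sup_{\supp\hat\om^\e}|\psi_i^\e|)\, \|\hat\om^\e\|_{L^1}$ and the fact that $\hat\om^\e$ is compactly supported give the lemma immediately. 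All of the substantive work lies in showing that the boundary contributions in Green's identity vanish.

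First I would handle the inner boundaries $\partial B_j^\e$. The tangency condition $\hat v_i^\e \cdot \hat n = 0$ is equivalent to $\partial_\tau \psi_i^\e = 0$, so $\psi_i^\e$ is equal to some constant $c_j^\e$ on each component. With the convention $\hat\tau = -\hat n^\perp$, a direct computation gives $\hat v_i^\e \cdot \hat\tau = -\partial_n \psi_i^\e$, so the associated boundary integral equals $-c_j^\e\oint_{\partial B_j^\e} \hat v_i^\e \cdot \hat\tau\, ds$, which is zero by the fourth line of \eqref{tilde_v}. This vanishing is insensitive to the additive constant in $\psi_i^\e$.

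Next I would treat the outer circle $\partial B(0,R)$. The excerpt already notes that $\hat v_i^\e(x) = \mathcal{O}(|x|^{-2})$; combined with $\int \hat\om^\e = 0$ and the vanishing of every circulation, this rules out a logarithmic mode in the stream function and gives the expansion $\psi_i^\e(x) = c_\infty + \mathcal{O}(|x|^{-1})$ for some constant $c_\infty$. Green's identity applied to $\Delta\psi_i^\e = \hat\om^\e$ on $\Omega^\e \cap B(0,R)$, together with the previous step, shows that $\oint_{\partial B(0,R)} \partial_n \psi_i^\e\, ds = \int \hat\om^\e = 0$ as soon as $\supp \hat\om^\e \subset B(0,R)$. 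Hence
$$\oint_{\partial B(0,R)} \psi_i^\e\, \partial_n \psi_i^\e\, ds = \oint_{\partial B(0,R)} (\psi_i^\e - c_\infty)\, \partial_n \psi_i^\e\, ds = \mathcal{O}(R^{-3}) \cdot 2\pi R \longrightarrow 0.$$

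Sending $R \to \infty$ in Green's identity then produces the identity above, and the lemma follows. Observe that the whole argument is stable under shifts $\psi_i^\e \mapsto \psi_i^\e + C$: the $L^2$ norm on the left is unaffected; the inner boundary contributions vanish regardless of the $c_j^\e$; and the right-hand side of the identity changes by $-C\int\hat\om^\e = 0$. This is precisely what makes the estimate valid for \emph{any} choice of stream function, as stated. The main technical obstacle is the decay analysis at infinity used in the last display---it depends crucially on \emph{both} the vanishing of every circulation \emph{and} of $\int\hat\om^\e$; without either hypothesis a logarithmic mode would appear in $\psi_i^\e$ and spoil the control of the outer boundary term.
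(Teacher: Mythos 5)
Your argument is correct and follows essentially the same route as the paper: integrate by parts to obtain $\|\hat v_i^\e\|_{L^2}^2=-\int_{\Omega^\e}\p_i^\e\,\hat\om^\e\,dx$, kill the inner boundary terms using that $\p_i^\e$ is constant on each $\pd B_j^\e$ together with the vanishing circulations, and kill the term at infinity using the decay $\hat v_i^\e=\mc{O}(|x|^{-2})$ and the boundedness of $\p_i^\e$ (which, exactly as you note, rests on both $\int\hat\om^\e=0$ and the zero circulations to exclude the $\ln|x|$ mode). The only cosmetic difference is that the paper justifies the boundedness of $\p_i^\e$ at infinity explicitly through the Biot--Savart decomposition $\p_i^\e=\P_D^\e[\hat\om^\e]+\sum_j\a_j[\hat\om^\e]\P_j^\e+C$ and the identity $\sum_j\F_j^\e\equiv 1$, and it only needs $\p_i^\e=\mc{O}(1)$ rather than your sharper expansion $c_\infty+\mc{O}(|x|^{-1})$.
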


\begin{proof}
We write the Biot-Savart law \eqref{biot} for $\hat v_i^\e$:
\[ \hat v_i^\e = K^{\e}[\hat \om^\e]+\sum_{j=1}^n \a_j[\hat \om^\e] X_j^{\e}\]
where $\a_j[\hat \om^\e] = \int_{\Omega^{\e}} \F_j^{\e} \hat\om^\e \, dx$. We can rewrite this equation with the stream functions:
\[ \p_i^\e = \P_D^{\e}[\hat \om^\e]+\sum_{j=1}^n \a_j[\hat \om^\e] \P_j^{\e}+ C,\]
with $C$ a constant. From \eqref{psi_prop} we recall that $\P_D^{\e}[\hat \om^\e]$ is bounded at infinity. Using the behavior at infinity of $\P_j^{\e}$ \eqref{Psi infini} we compute:
\[\sum_{j=1}^n \a_j[\hat \om^\e] \P_j^{\e}= \sum_{j=1}^n \a_j[\hat \om^\e] \Big(\frac{\ln |x|}{2\pi} + \mc{O}\Big(\frac{1}{|x|}\Big)\Big)= \frac{\ln |x|}{2\pi} \sum_{j=1}^n \int \F_j^{\e} \hat\om^\e + \mc{O}\Big(\frac{1}{|x|}\Big)=\mc{O}\Big(\frac{1}{|x|}\Big),\]
where we have used that $\sum_{j=1}^n \F_j^{\e}\equiv 1$ and $\int \hat \om^\e=0$. Then, we have obtained that $\p_i^\e$ is bounded at infinity. In the same way, we can prove that $\hat v_i^\e = \mc{O}(\frac{1}{|x|^2})$ at infinity. Recalling that the stream functions are constant on each boundary component, we infer from the vanishing circulations that
\begin{eqnarray*}
\|\hat v_i^\e\|_{L^2(\Omega^\e)}^2&=& \int_{\Omega^\e} \hat v_i^\e \cdot \hat v_i^\e\, dx= -\int_{\Omega^\e} \na \p_i^\e \cdot \hat v_i^{\e\perp} \, dx\\
&=& -\int_{\Omega^\e} \p_i^\e \hat\om^\e\, dx - \sum_{j=1}^n \oint_{\partial B_j^\e} \p_i^\e\ \hat v_i^\e\cdot\hat\t\, ds+ \lim_{R\to\infty} \oint_{\pd B(0,R)} \p_i^\e\ \hat v_i^\e\cdot\hat\t\, ds\\
&=& -\int_{\Omega^\e} \p_i^\e \hat\om^\e\, dx\\
&\leq & \left( \sup_{\supp(\hat \om^\e)} |\p_i^\e| \right) \ \|\hat \om^\e\|_{L^1(\Omega^\e)},
\end{eqnarray*}
which concludes the proof of the lemma.
\end{proof}

Using that the support of $\na \f_j$ is far from $z_j$, estimating $\|\hat \om^\e\|_{L^1}$ is not too difficult. Indeed, $x-z_i^{\ast j,\e}=x-z_j+\e^2\frac{z_i-z_j}{|z_i-z_j|^2}$ then
\begin{equation}\label{om_L1}
\|\hat \om^\e\|_{L^1(\Omega^\e)}\leq C\e^2.
\end{equation}

Now we need an estimate of $\p_i^\e$ on the support of $\hat \om^\e$, i.e. on the support of $\na \f_j$. For that, we use the explicit formula of $v_i^\e$ \eqref{v} to introduce the following stream function of $\hat v_i^\e = v_i^\e-X_i^\e$:
\[\p_i^\e(x) := \frac{1}{2\pi} \ln |x-z_i| - \sum_{j\neq i}\Bigl( \f_j\frac{1}{2\pi}\ln \frac{|x-z_i^{\ast j,\e}|}{|x-z_j|} \Bigl)-(\P_i^\e-c_{i,i}^\e),\]
where $c_{i,i}^\e$ is the value of $\P_i^\e$ on $\pd B_i^\e$. The first term on the right hand side is bounded by a constant and the second by $C\e^2$ (unifomly on the support of $\hat \om^\e$). The last term is the hardest to treat. The other main idea in this proof is to use the tools available for bounded domains, such as the maximum principle. To do this, we will use
\[
\mathbf{i}=\mathbf{i}(x):=x/|x|^2
\] the inversion with respect to the unit circle, which sends the exterior of the unit disk to the interior of the unit disk.
Without loss of generality, let us assume that the obstacle $B_i^\e$ is centered at the origin: $B_i^\e=B(0,\e)$. As the radius of $B_i^\e$ is $\e$, we should use
\[
\mathbf{i}_\e(x):=\mathbf{i}(x/\e)=\e \mathbf{i}(x).
\]
The image of $\Omega^\e$ by the last inversion, denoted by $\tilde \Omega^\e$, is the unit disk with $n-1$ holes. These holes $\tilde B^\e_j$ correspond to the image of $B_j^\e$ by $\mathbf{i}_\e$, for $j\neq i$. We give here some properties of the inversion $\mathbf{i}_\e$:
\begin{lemma}\label{disk_inversion}
We have that
\begin{itemize}
\item $\mathbf{i}_\e^{-1}(x) = \mathbf{i}_\e(x)=\e \mathbf{i}(x)$;
\item $\mathbf{i}_\e(B((d,0),\e))=B((\frac{\e}{d(1-\e^2)},0),\frac{\e^2}{d(1-\e^2)})$;
\item $\det D\mathbf{i}(x)=-1/|x|^4$.
\end{itemize}
Let $f:\Omega^{\e} \to\R$, and $g(x):= f(\mathbf{i}_\e^{-1}(x))$. If $f(x)=\mc{O}(1)$ at infinity and $\Delta f \equiv 0$ in a neighborhood of infinity, then
\begin{itemize}
\item $\na^\perp g (x) = -\e D\mathbf{i}(x)\ (\na^\perp f)(\e \mathbf{i}(x))$;
\item $\D g(x) = \e^2 |\det (D\mathbf{i}(x))|\ (\D f)(\e \mathbf{i}(x))$.
\end{itemize}
If $f(x)=\frac{\a}{2\pi}\ln|x| + \mc{O}(1)$ and $\D f = 0$, then $\D g(x) = - \a \d_0$.
\end{lemma}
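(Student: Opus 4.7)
The plan is to verify each claim by direct computation, exploiting the conformal structure of the planar inversion. First, the involution identity $\mathbf{i}_\e^{-1}=\mathbf{i}_\e$ follows by substitution: using $|\e\mathbf{i}(x)|^2=\e^2/|x|^2$, one checks $\mathbf{i}_\e(\mathbf{i}_\e(x))=x$. The explicit image of $B((d,0),\e)$ is the classical fact that planar inversion sends circles not through the center of inversion to circles; I would recover the new center and radius either by identifying the images of the two collinear boundary points $(d\pm\e,0)$ as diametrically opposite points of the image circle, or by substituting into the standard inversion formula and simplifying. For the Jacobian, I would differentiate $\mathbf{i}(x)=x/|x|^2$ componentwise to get $D\mathbf{i}(x)=\tfrac{1}{|x|^2}I-\tfrac{2}{|x|^4}xx^T$, and then apply the two-dimensional identity $\det(aI+b\,xx^T)=a^2+ab|x|^2$ with $a=|x|^{-2}$ and $b=-2|x|^{-4}$.

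For the gradient pull-back, the chain rule and symmetry of $D\mathbf{i}$ give $\nabla g(x)=\e\,D\mathbf{i}(x)(\nabla f)(\e\mathbf{i}(x))$. To convert to the form with $\nabla^\perp$, I would use the anti-commutation $J\,D\mathbf{i}(x)=-D\mathbf{i}(x)\,J$, where $J$ denotes rotation by $\pi/2$. Geometrically, this reflects the fact that $\mathbf{i}$ is orientation-reversing and conformal, so $D\mathbf{i}$ is a scalar multiple of a reflection; algebraically, it reduces to the one-line identity $Jxx^T+xx^TJ=|x|^2J$ valid in $\R^2$. For the Laplacian, the cleanest route is the conformal identity $D\mathbf{i}^T D\mathbf{i}=|x|^{-4}I$ (expand using $(xx^T)(xx^T)=|x|^2\,xx^T$), which makes the cross terms in the chain-rule expansion of $\Delta(f\circ\e\mathbf{i})$ collapse to $|x|^{-4}(\Delta f)(\e\mathbf{i}(x))$; the first-order pieces vanish since each component of $\mathbf{i}$ is harmonic on $\R^2\setminus\{0\}$, which yields the claimed $\e^2|\det D\mathbf{i}(x)|(\Delta f)(\e\mathbf{i}(x))$.

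The distributional assertion combines the previous formula (so $\Delta g\equiv 0$ classically on $\R^2\setminus\{0\}$) with an expansion of $g$ at the origin. Substituting $f(y)=\tfrac{\a}{2\pi}\ln|y|+\mc{O}(1)$ gives $g(x)=\tfrac{\a}{2\pi}\ln(\e/|x|)+\mc{O}(1)=-\tfrac{\a}{2\pi}\ln|x|+\mc{O}(1)$ as $x\to 0$; thus $g+\tfrac{\a}{2\pi}\ln|x|$ is bounded and harmonic in a punctured neighborhood of $0$, so by removable singularities it extends harmonically, and its distributional Laplacian vanishes there. Combining with $\Delta(\tfrac{1}{2\pi}\ln|x|)=\d_0$ yields $\Delta g=-\a\,\d_0$ in $\mc{D}'(\R^2)$. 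The main technical point in the whole lemma is the anti-commutation identity and the orthogonality relation $D\mathbf{i}^T D\mathbf{i}=|x|^{-4}I$; once these are in hand, the rest is essentially calculus and the standard singularity analysis of the fundamental solution.
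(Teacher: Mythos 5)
Your proposal is correct and takes essentially the same route as the paper, whose entire proof consists of the remark that the last item follows from the removable-singularity theorem for bounded harmonic functions on a punctured disk (exactly your argument, combined with $\Delta(\tfrac{1}{2\pi}\ln|x|)=\d_0$) and that ``all the other items can be shown by basic computations,'' which your chain-rule, anti-commutation and conformality identities supply in full. One incidental remark: carrying out your diametral-points computation for the second bullet actually yields center $\bigl(\tfrac{\e d}{d^2-\e^2},0\bigr)$ and radius $\tfrac{\e^2}{d^2-\e^2}$, which agree with the stated $\tfrac{\e}{d(1-\e^2)}$ and $\tfrac{\e^2}{d(1-\e^2)}$ only when $d=1$, so the statement contains a harmless typo --- only the fact that the image hole lies at distance $\mc{O}(\e)$ from the origin is used later.
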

The last item can be proved using that a harmonic function in $B(0,\r)\setminus\{0\}$ which is bounded can be extended to a harmonic function in $B(0,\r)$. All the other items can be shown by basic computations. Now, we use this lemma to prove the following

\begin{lemma} \label{lemma_inversion}
There exists a constant $C$, independent of $\e$, such that
\[ \sup_{\supp(\hat \om^\e)} |\P_i^\e-c_{i,i}^\e| \leq C(|\ln \e| + 1 + \sum_{j=1}^n \|\hat v_j^\e\|^2_{L^2(\Omega^\e)}).\]
\end{lemma}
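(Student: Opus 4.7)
My plan is to exploit the inversion $\mathbf{i}_\e$ to reduce the problem to a bounded domain where the maximum principle applies, and then estimate the resulting boundary values by studying the ``period matrix'' $\{c_{i,l}^\e\}$.

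Without loss of generality assume $z_i=0$, and set $\tilde\Omega^\e := \mathbf{i}_\e(\Omega^\e)$ and $\tilde\Psi_i^\e(y) := \Psi_i^\e(\mathbf{i}_\e(y))$. Under the inversion, $\partial B_i^\e$ is sent to $\partial B(0,1)$, while each $\partial B_l^\e$ ($l\neq i$) becomes a small circle $\partial \tilde B_l^\e$ at distance $\sim\varepsilon/|z_l|$ from the origin with diameter $\sim\varepsilon^2/|z_l|$. Since $\Psi_i^\e=\frac{1}{2\pi}\ln|x|+\mathcal O(1)$ at infinity and $\Delta\Psi_i^\e=0$ in $\Omega^\e$, the last item of Lemma~\ref{disk_inversion} gives $\Delta\tilde\Psi_i^\e=-\delta_0$ in $B(0,1)\setminus\bigcup_{l\neq i}\overline{\tilde B_l^\e}$, with $\tilde\Psi_i^\e\equiv c_{i,i}^\e$ on $\partial B(0,1)$ and $\tilde\Psi_i^\e\equiv c_{i,l}^\e$ on $\partial\tilde B_l^\e$.

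I then introduce the auxiliary function $w(y):=\tilde\Psi_i^\e(y)-c_{i,i}^\e+\frac{1}{2\pi}\ln|y|$, which is harmonic in $\tilde\Omega^\e\cup\{0\}$ (the log cancels the Dirac), vanishes on $\partial B(0,1)$, and equals $(c_{i,l}^\e-c_{i,i}^\e)+\frac{1}{2\pi}\ln|y|$ on $\partial\tilde B_l^\e$. Because $|y|$ is comparable to $\varepsilon$ on each $\partial\tilde B_l^\e$ with ratios depending only on $\rho$ and $R_0$, we get $|\frac{1}{2\pi}\ln|y||\leq C(|\ln\varepsilon|+1)$. The maximum principle yields
\[
\sup_{\tilde\Omega^\e}|w|\leq \max_{l\neq i}|c_{i,l}^\e-c_{i,i}^\e|+C(|\ln\varepsilon|+1).
\]
Since the image of $\supp(\hat\omega^\e)$ under $\mathbf{i}_\e$ lies in the region where $|y|\sim\varepsilon$, undoing the inversion gives
\[
\sup_{\supp(\hat\omega^\e)}|\Psi_i^\e-c_{i,i}^\e|\leq \max_{l\neq i}|c_{i,l}^\e-c_{i,i}^\e|+C(|\ln\varepsilon|+1).
\]

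The main remaining step is to bound $|c_{i,l}^\e-c_{i,i}^\e|$. Using the decomposition $\Psi_i^\e-c_{i,i}^\e=V_i^\e-\psi_i^\e$, where $V_i^\e$ is the explicit (single-valued) primitive of $v_i^\e$ underlying \eqref{v} and $\psi_i^\e$ is the stream function of $\hat v_i^\e$ (constant on each $\partial B_k^\e$ with value $d_k$, by \eqref{tilde_v}), integrating $\nabla\Psi_i^\e$ along any path from $P_i\in\partial B_i^\e$ to $P_l\in\partial B_l^\e$ gives $c_{i,l}^\e-c_{i,i}^\e=[V_i^\e(P_l)-V_i^\e(P_i)]-(d_l-d_i)$. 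Using the inversion identity $|P-z_i^{\ast l,\e}||z_l-z_i|=\varepsilon|P-z_i|$ valid for $P\in\partial B_l^\e$, the first bracket is $\mathcal O(|\ln\varepsilon|+1)$. For the second, the Biot-Savart formula applied to $\hat v_i^\e$ (whose circulations vanish) yields $\psi_i^\e=\Psi_D^\e[\hat\omega_i^\e]+\sum_k m_k^\e\Psi_k^\e+C$ with $m_k^\e=\int\Phi_k^\e\hat\omega_i^\e$, hence $d_l-d_i=\sum_k m_k^\e(c_{l,k}^\e-c_{i,k}^\e)$; integration by parts (justified by the decay $\hat v_i^\e=\mathcal O(1/|x|^2)$ and boundedness of $\Phi_k^\e$) converts $m_k^\e=\int\nabla^\perp\Phi_k^\e\cdot\hat v_i^\e\,dx$, so Cauchy--Schwarz combined with the logarithmic bound $|c_{l,k}^\e|\leq C(|\ln\varepsilon|+1)$ and the capacity-type estimate $\|\nabla\Phi_k^\e\|_{L^2(\Omega^\e)}^2=\mathcal O(1/|\ln\varepsilon|)$ gives, after AM--GM,
\[
|d_l-d_i|\leq C\sqrt{|\ln\varepsilon|}\,\|\hat v_i^\e\|_{L^2(\Omega^\e)}\leq C\bigl(|\ln\varepsilon|+\|\hat v_i^\e\|_{L^2(\Omega^\e)}^2\bigr).
\]
The hardest parts are (i) carefully justifying the integrations by parts at infinity against functions with logarithmic growth and (ii) the capacity-scaling bound for $\nabla\Phi_k^\e$, which requires a separate argument (one may again use the inversion). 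Assembling these pieces produces the claimed bound, with the slack $\sum_{j=1}^n\|\hat v_j^\e\|_{L^2}^2\geq\|\hat v_i^\e\|_{L^2}^2$ absorbing the quadratic term.
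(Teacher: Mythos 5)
Your first half (inversion to $\tilde\Omega^\e$, maximum principle on $w=\tilde\Psi_i^\e-c_{i,i}^\e+\frac{1}{2\pi}\ln|y|$, reduction to bounding the boundary constants $c_{i,l}^\e-c_{i,i}^\e$ up to an error $C(|\ln\e|+1)$) is correct and is essentially the paper's argument, phrased without the explicit decomposition $\tilde\Psi^\e=\tilde\Psi_D^\e[-\d_0]+\sum_{j\ne i}c_j^\e\tilde\F_j^\e$. The gap is in your treatment of the constants. After the (correct) identity $c_{i,l}^\e-c_{i,i}^\e=[V_i^\e(P_l)-V_i^\e(P_i)]-(d_l-d_i)$, the boundary values of $\psi_i^\e=\P_D^\e[\hat\om^\e]+\sum_k m_k^\e\P_k^\e+C$ give $d_l-d_i=\sum_k m_k^\e\,(c_{k,l}^\e-c_{k,i}^\e)$ (note the indices: these are values of $\P_k^\e$ on $\pd B_l^\e$ and $\pd B_i^\e$, not $c_{l,k}^\e-c_{i,k}^\e$). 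You then invoke ``the logarithmic bound $|c_{l,k}^\e|\le C(|\ln\e|+1)$'' as an input. But a bound on differences of boundary values of the $\P_k^\e$ is exactly the quantity the lemma is designed to control; you are assuming, for all indices $k$, the estimate you are in the middle of proving for the index $i$. As written the argument is circular, and on top of that it leans on a second unproven ingredient, the capacity estimate $\|\na\F_k^\e\|_{L^2(\Omega^\e)}^2=\mc{O}(1/|\ln\e|)$, which you defer. The paper avoids both issues by a different mechanism: the circulation conditions give a linear system $P\,(c_j^\e)=-(\tilde\F_j^\e(0))$ for the constants, and the identity $P^{-1}=-M$ (with $M$ the Gram matrix of the $\tilde X_k^\e$) converts the problem into bounding $\|\tilde X_k^\e\|_{L^2(\tilde\Omega^\e)}^2\le\|X_k^\e-X_i^\e\|_{L^2(\Omega^\e)}^2$, which is then compared to $\|\hat v_k^\e\|_{L^2}+\|\hat v_i^\e\|_{L^2}+C(1+|\ln\e|)^{1/2}$; no a priori knowledge of the $c$'s and no capacity estimate is needed, and this is where the term $\sum_j\|\hat v_j^\e\|_{L^2}^2$ in the statement comes from.

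Your route could in fact be repaired, and more cheaply than you propose: instead of integrating by parts and using Cauchy--Schwarz, use $0\le\F_k^\e\le 1$ together with \eqref{om_L1} to get $|m_k^\e|=|\int\F_k^\e\hat\om^\e|\le\|\hat\om^\e\|_{L^1}\le C\e^2$. Setting $D:=\max_{k,l,m}|c_{k,l}^\e-c_{k,m}^\e|$ and running your identity for every index simultaneously yields $D\le C(|\ln\e|+1)+nC\e^2 D$, which closes for $\e$ small and gives $D\le C(|\ln\e|+1)$ without any reference to $\|\hat v_i^\e\|_{L^2}$. But this self-consistent absorption step (or the paper's matrix inversion) is precisely what is missing from your write-up; without it, the ``logarithmic bound'' on the $c$'s has no source.
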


\begin{proof}
By the definition of $\P_i^\e$, we know that there exists some constants $c_{i,j}^\e$ such that
\[
\P_i^\e \equiv c_{i,j}^\e \text{ on } \partial B_j^\e, \text{ for all } j=1\dots n.
\]
Let us denote the inversion of $\P_i^\e$ by $\tilde \P^\e(x):= \P_i^\e(\mathbf{i}_\e^{-1}(x))-c_{i,i}^\e=\P_i^\e(\e \mathbf{i}(x))-c_{i,i}^\e$. Then it verifies:
\begin{equation*}
\left\lbrace\begin{aligned}
&\D \tilde \P^\e =- \d_0 &\text{ in } \tilde \Omega^\e\\
&\tilde \P^\e=0 &\text{ on } \pd B(0,1)\\
&\tilde \P^\e=c_j^\e:=c_{i,j}^\e-c_{i,i}^\e &\text{ on } \pd \tilde B^\e_j \text{, for all } j\neq i\\
&\oint_{\pd B(0,1)} \na^\perp\tilde \P^\e \cdot \hat\t\, ds = -1 & \\
&\oint_{\pd \tilde B^\e_j} \na^\perp\tilde \P^\e \cdot \hat\t\, ds = 0 & \text{ for all }j=1\dots n, \ j\neq i,
\end{aligned}\right.
\end{equation*}
after remarking, by a simple calculation, that the circulation changes of sign by the inversion.

In bounded domain, we deduce from Section~\ref{sect : biot} the following decomposition:
\begin{equation}\label{dirichlet_decompo}
\tilde \P^\e = \tilde \P_D^\e[-\d_0]+ \sum_{j\neq i} c_j^\e \tilde \F_j^\e.
\end{equation}
We have already proved in Section~\ref{sect : biot} that $0\leq \tilde \F_j^\e \leq 1$.

If we denote $f:=\tilde \P_D^\e[-\d_0] + \frac{1}{2\pi} \ln |x|$, we have:
\[ \D f=0,\ f=0 \text{ on } \pd B(0,1), \text{ and } f= \frac{1}{2\pi} \ln |x| \text{ on }\pd \tilde B^\e_j.\]
Using that the distance between the origin and any hole is $\mc{O}(\e)$ (see Lemma~\ref{disk_inversion}), we can apply the maximum principle to $f$ to say that $|f(x)|\leq C |\ln{\e}|$. Then we have
\[|\tilde \P_D^\e[-\d_0](x)| \leq C(|\ln \e| + |\ln |x||)\]
which verifies:
\begin{equation*}
|\tilde \P_D^\e[-\d_0]| \leq C |\ln \e| \text{ on }\mathbf{i}_\e( \supp(\hat \om)).
\end{equation*}

To finish the estimate of $\tilde \P^\e$ in \eqref{dirichlet_decompo}, we need to estimate the constants $c_j^\e$. We compute the circulation of $\na^\perp\tilde \P^\e$
\[0 = \oint_{\pd \tilde B^\e_k} \na^\perp\tilde \P^\e \cdot \hat\t\, ds = \oint_{\pd \tilde B^\e_k} \na^\perp\tilde \P_D^\e[-\d_0] \cdot \hat\t\, ds + \sum_{j\neq i} c_j^\e \oint_{\pd \tilde B^\e_k} \na^\perp\tilde \F_j^\e \cdot \hat\t\, ds,\]
for any $k\neq i$. By \eqref{circu of K}, we compute the circulation of $\na^\perp \tilde \P_D^\e$
\[\oint_{\pd \tilde B^\e_k} \na^\perp\tilde \P_D^\e[-\d_0] \cdot \hat\t\, ds = -\int_{\tilde \Omega^\e} \tilde \F_k^\e \D \tilde \P_D^\e[-\d_0]=\tilde\F_k^\e(0).\]
If we denote by $P$ the matrix with coefficients $p_{k,j}=\oint_{\pd \tilde B^\e_k} \na^\perp\tilde \F_j^\e \cdot \hat\t\, ds=-\int_{\tilde \Omega^\e} \nabla\tilde \F_j^\e \cdot \nabla \tilde \F_k^\e$ for $k,j \neq i$, then we have
\[P\begin{pmatrix} c_1^\e \\ \vdots \\ c_n^\e \end{pmatrix}= - \begin{pmatrix} \tilde \F_1^\e(0) \\ \vdots \\ \tilde \F^\e_n(0) \end{pmatrix},\]
where we know that $|\tilde \F_j^\e| \leq 1$. Moreover, if we expand the harmonic vector field $\na^\perp\tilde \F_j^\e$ in the basis $\{\tilde \P_l^\e\}$, we get
\begin{eqnarray}
\na^\perp\tilde \F_j^\e&=& \sum_{l\neq i}\Bigl(\oint_{\pd \tilde B^\e_l} \na^\perp\tilde \F_j^\e \cdot \hat\t\, ds \Bigl) \tilde X_l^\e \nonumber\\
\int_{\tilde \Omega^\e} \na^\perp\tilde \F_j^\e \cdot \tilde X_k^\e &=& \sum_{l\neq i}\Bigl(\int_{\tilde \Omega^\e} \tilde X_k^\e \cdot \tilde X_l^\e \Bigl) \Bigl( \oint_{\pd \tilde B^\e_l} \na^\perp\tilde \F_j^\e \cdot \hat\t\, ds \Bigl).\label{phi_X}
\end{eqnarray}
Concerning the term on the left hand side, we integrate by parts:
\[\int_{\tilde \Omega^\e} \na^\perp\tilde \F_j^\e \cdot \tilde X_k^\e=-\int_{\tilde \Omega^\e} \tilde \F_j \curl \tilde X_k^\e-\sum_{l\neq i} \tilde \F_j^\e|_{\pd \tilde B^\e_l} \oint_{\pd \tilde B^\e_l} \tilde X_k^\e \cdot \hat\t\, ds =-\d_{j,k}.\]
Then, if we denote $M$ by the matrix $m_{l,k}=\int_{\tilde \Omega^\e} \tilde X_k^\e \cdot \tilde X_l^\e$, identity \eqref{phi_X} can be written:
\[-I_{n-1} = P\ M\]
which means that $P^{-1}=-M$ and that $|c_j^\e|\leq \sum_{k\neq i} |m_{k,j} |\leq C\sum_{k\neq i} \|\tilde X_k^\e\|_{L^2(\tilde \Omega^\e)}^2$.

The last step of this proof is to evaluate $\|\tilde X_k^\e\|_{L^2(\tilde \Omega^\e)}^2$. Actually, we remark that
\[\tilde \P_k^\e(x)=-(\P_k^\e-c_{k,i}^\e-\P_i^\e+c_{i,i}^\e)(\mathbf{i}_\e^{-1}(x)).\]
Using that
\[\tilde X_k^\e=\na^\perp\tilde \P_k^\e(x)= -D(\mathbf{i}_\e^{-1})^t(x)(X_k^\e-X_i^\e)(\mathbf{i}_\e^{-1}(x)),\]
by Lemma~\ref{disk_inversion}, we compute
\begin{eqnarray*}
\|\tilde X_k^\e\|_{L^2(\tilde \Omega^\e)}^2 &\leq & \int_{\tilde \Omega^\e} |D(\mathbf{i}_\e^{-1}(x))|^2 |X_k^\e-X_i^\e|^2(\mathbf{i}_\e^{-1}(x))\, dx\\
&\leq& \int_{\Omega^\e} |X_k^\e-X_i^\e|^2(y) \, dy\\
&\leq& \| X_k^\e-X_i^\e\|_{L^2(\Omega^\e)}^2.
\end{eqnarray*}
Recalling the definition of $\hat v_i^\e$, we write
\begin{eqnarray*}
\| X_k^\e-X_i^\e\|_{L^2(\Omega^\e)} &\leq & \|\hat v_k^\e\|_{L^2(\Omega^\e)} + \|\hat v_i^\e\|_{L^2(\Omega^\e)} \\
&&+\frac{1}{2\pi} \left\|\frac{(x-z_k)^\perp}{|x-z_k|^2} -\frac{(x-z_i)^\perp}{|x-z_i|^2}\right\|_{L^2(\Omega^\e)} \\
&& +\left\| \na^\perp \Bigl(\sum_{j\neq k} \f_j\frac{1}{2\pi}\ln \frac{|x-z_k^{\ast j,\e}|}{|x-z_j|}-\sum_{j\neq i} \f_j\frac{1}{2\pi}\ln \frac{|x-z_i^{\ast j,\e}|}{|x-z_j|} \Bigl)\right\|_{L^2(\Omega^\e)}.
\end{eqnarray*}
As $\int_\e^C\frac{1}{s}\, ds= \ln C - \ln \e$, the last term in the right hand side is bounded by $C(1+|\ln \e|)^{1/2}$. Adding that $\frac{(x-z_k)^\perp}{|x-z_k|^2} -\frac{(x-z_i)^\perp}{|x-z_i|^2}$ is square integrable at infinity, the third term in the right hand side is also bounded by $C(1+|\ln \e|)^{1/2}$.

This concludes the proof of the lemma.
\end{proof}

Putting together all the results of this subsection, we conclude that
\[ \|\hat v_i^\e\|^2_{L^2(\Omega^\e)} \leq C \Bigl(|\ln \e| + \sum_{j=1}^n \|\hat v_j^\e\|^2_{L^2(\Omega^\e)}\Bigl)\e^2, \]
which implies, by summing on $i$:
\begin{equation*}
 \sum_{i=1}^n\|\hat v_i^\e\|^2_{L^2(\Omega^\e)} \leq \frac{nC |\ln \e|\e^2}{1-nC\e^2}.
 \end{equation*}
In particular, this proves that $\|v_i^\e-X_i^\e\|_{L^2(\Omega^\e)}\to 0$. Bringing together with \eqref{harmonic first limit}, Proposition~\ref{B_n} is proved.

\subsection{The Laplacian-inverse (proof of Proposition~\ref{C_n})} \label{sect : C}\

As in the previous subsection, $k$ is fixed, so that to simplify notation, we will omit the parameter $k$ in all the functions and domains. Let $M_0$, and $\{ z_i\}_{i=1\dots n}\in (\R^2)^{n}$ be fixed. The goal of this section is to prove that the part with zero circulation around each $B_i^\e$ in the Biot-Savart law:
\[EK^{\e}[f\vert_{\Omega^{\e}}]+\sum_{i=1}^{n} m_i^{\e}[f] EX_i^{\e}\]
minus the Biot-Savart formula in $\R^2$:
\[K_{\R^2}[f\vert_{\Omega^{\e}}] \]
converges strongly in $L^2(\R^2)$ as $\e\to 0$, uniformly in $f$, for those $f$ verifying:
\[ \| f \|_{L^1\cap L^{p_0}(\R^2)} \leq M_{0}\quad \text{and} \quad f \text{ is compactly supported.}
\]

As before, we use the cut-off function $\f_j$ defined in \eqref{cut_off}, and the notation:
\begin{equation}\label{y ast}
y^{\ast j,\e}:= z_j+\e^2\frac{y-z_j}{|y-z_j|^2},
\end{equation}
 but now we introduce
\begin{equation*}
v^\e[f](x):= \frac{1}{2\pi} \int_{\Omega^\e} \frac{(x-y)^\perp}{|x-y|^2} f(y) \, dy -\na^\perp \Bigl(\sum_{i=1}^n \frac{\f_i(x)}{2\pi} \int_{\Omega^\e}\ln \frac{|x-y^{\ast i,\e}|}{|x-z_i|} f(y)\, dy \Bigl).
\end{equation*}
The properties of such a vector field are listed here:
\begin{lemma}\label{lem correction2}
For any $f\in L^{p_{0}}_{c}(\R^2)$ and $\e\in(0,\rho/2)$, we have that:
\begin{enumerate}
\item $v^\e[f]$ is divergence free ;
\item $v^\e[f]$ is tangent to the boundary ;
\item the circulations of $v^\e[f]$ around $B_i^\e$ are equal to zero, for all $i=1\dots n$ ;
\item the curl of $v^\e[f]$ is equal to:
\begin{equation*}
 f\vert_{\Omega^{\e}}(x) -\sum_{i=1}^n \frac{\D\f_i(x)}{2\pi} \int_{\Omega^\e}\ln \frac{|x-y^{\ast i,\e}|}{|x-z_i|} f(y) \, dy
 -\sum_{i=1}^n \frac{\na\f_i(x)}{\pi}\cdot \int_{\Omega^\e}\Bigl( \frac{x-y^{\ast i,\e}}{|x-y^{\ast i,\e}|^2}-\frac{x-z_i}{|x-z_i|^2} \Bigl) f(y) \, dy.
\end{equation*}
\end{enumerate}
\end{lemma}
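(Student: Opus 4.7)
I would mirror the proof of Lemma~\ref{lem correction}, treating $v^\e[f]$ as the continuous analogue of $v_i^\e$ with point vortices replaced by a smoothly distributed source. Using the identity $\frac{(x-y)^\perp}{2\pi|x-y|^2} = \na_x^\perp\bigl(\frac{1}{2\pi}\ln|x-y|\bigr)$, the whole vector field $v^\e[f]$ can be written as a perpendicular gradient $\na^\perp\p^\e[f]$, with
\[
\p^\e[f](x) := \frac{1}{2\pi}\int_{\Omega^\e}\ln|x-y|\,f(y)\,dy - \sum_{i=1}^n\frac{\f_i(x)}{2\pi}\int_{\Omega^\e}\ln\frac{|x-y^{\ast i,\e}|}{|x-z_i|}f(y)\,dy.
\]
This gives (1) immediately, since a perpendicular gradient is always divergence free.

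For (2) and (3) I would restrict attention to a neighborhood of a fixed boundary circle $\pd B_j^\e$, where (for $\e$ small enough) $\f_j \equiv 1$ and $\f_i \equiv 0$ for $i\neq j$, so that the stream function collapses to
\[
\p^\e[f](x) = \frac{1}{2\pi}\int_{\Omega^\e}\ln\frac{|x-y|\,|x-z_j|}{|x-y^{\ast j,\e}|}\,f(y)\,dy.
\]
The classical circle-inversion identity $|x-y^{\ast j,\e}|\,|y-z_j| = \e\,|x-y|$, valid whenever $|x-z_j|=\e$ and $y\neq z_j$, reduces the logarithm to $\ln|y-z_j|$, so $\p^\e[f]$ is constant on $\pd B_j^\e$. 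Since $v^\e[f]\cdot\hat n = \pd_\t\p^\e[f]$, tangency (2) follows. For (3), Stokes' theorem gives $\oint_{\pd B_j^\e} v^\e[f]\cdot\hat\t\,ds = \int_{B_j^\e}\D\p^\e[f]\,dx$; using $\D\ln|x-a| = 2\pi\d_a$ and noting that $\D\ln|x-y|$ vanishes inside $B_j^\e$ when $y\in\Omega^\e$, the right-hand side reduces to
\[
\int_{\Omega^\e} f(y)\,dy\ -\ \int_{\Omega^\e} \1_{B_j^\e}(y^{\ast j,\e})\,f(y)\,dy\ =\ 0,
\]
since $z_j\in B_j^\e$ and, crucially, the inversion $y\mapsto y^{\ast j,\e}$ sends every $y\in\Omega^\e$ into $B_j^\e$.

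For (4) I would differentiate $v^\e[f]$ on $\Omega^\e$ term by term. The first integral contributes $f\vert_{\Omega^\e}$ to the curl via $\D\ln|x-y| = 2\pi\d_y$. For each correction $\f_i\p_{i,f}$, with $\p_{i,f}(x) := \frac{1}{2\pi}\int_{\Omega^\e}\ln\frac{|x-y^{\ast i,\e}|}{|x-z_i|}f(y)\,dy$, one observes that $\p_{i,f}$ is harmonic on $\Omega^\e$ because both singular points $z_i$ and $y^{\ast i,\e}$ lie in $B_i^\e$. Hence $\D(\f_i\p_{i,f}) = (\D\f_i)\p_{i,f} + 2\na\f_i\cdot\na\p_{i,f}$, and differentiating $\p_{i,f}$ and $\na\p_{i,f}$ under the integral sign reproduces exactly the two remainder terms announced in (4).

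The only step requiring genuine care is the delta accounting in (3): one must pair off the $y$-independent mass $\d_{z_j}\int f$ against the $y$-dependent family $\{\d_{y^{\ast j,\e}}\}_{y\in\Omega^\e}$, which works because circle inversion is a bijection between $\R^2\setminus\overline{B_j^\e}$ and $B_j^\e\setminus\{z_j\}$; no Jacobian enters because the two $\delta$-integrals are evaluated as set indicators of points, not through a change of variables. Everything else is a routine exercise in differentiation under the integral sign.
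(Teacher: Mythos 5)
Your proposal is correct and follows essentially the same route as the paper: items (1), (2) and (4) are exactly the stream-function computation that the paper imports from Lemma~\ref{lem correction}, and your circle-inversion identity on $\pd B_j^\e$ is the same mechanism used there for tangency. The one place where the paper is more careful than you are is item (3). You apply Green's theorem over $B_j^\e$ to a stream function whose distributional Laplacian there is a measure --- the pushforward of $f\,dy$ under $y\mapsto y^{\ast j,\e}$ --- which accumulates at $\pd B_j^\e$, and whose Biot--Savart part is only $W^{1,p_0}_{\loc}$; both facts need a word of justification before ``circulation equals enclosed vorticity'' can be invoked. The paper sidesteps this by treating the two pieces separately: it first shows $K_{\R^2}[f\vert_{\Omega^\e}]$ is continuous (via $W^{1,p_0}_{\loc}(\R^2)\hookrightarrow \mathcal{C}(\R^2)$ for $p_0>2$) and curl-free in $B_i^\e$, so its circulation vanishes; then, for the correction term, it swaps the $y$-integral with the contour integral and applies Stokes for each fixed $y$ on $B_i^\e\setminus B(y^{\ast i,\e},r_y)$, where the two poles $z_i$ and $y^{\ast i,\e}$ are strictly interior, giving $1-1=0$. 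Your cancellation $\int f - \int \1_{B_j^\e}(y^{\ast j,\e})f=0$ is the same one; the Fubini-then-Stokes order is simply what makes it rigorous without extra regularity arguments.
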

\begin{proof}
Concerning items (1),(2) and (4), the proof is exactly the same as that of Lemma~\ref{lem correction}. The only small difference is that to prove (3). For any $y\in \OM^\e$ fixed, we can consider $r_{y}>0$ such that $B(y^{\ast i,\e},r_{y})\Subset B(z_{i},\varepsilon)$, and we deduce by Stokes formula that:
\begin{align*}
\frac{1}{2\pi}\oint_{\partial B_{i}^\e} \nabla^\perp\Bigl(\ln\frac{|x-y^{\ast i,\e}|}{|x-z_i|}\Bigl) \cdot \hat \tau \, ds =
&= \frac{1}{2\pi}\oint_{\partial B_{i}^\e}\frac{(x-z_i)^\perp}{|x-z_i|^2}\cdot \hat \tau \, ds - \frac{1}{2\pi}\oint_{\partial B(y^{\ast j,\e},r_{y})}\frac{(x-y^{\ast j,\e})^\perp}{|x-y^{\ast j,\e}|^2}\cdot \hat \tau \, ds\\
&= 1-1=0.
\end{align*}
Moreover, as $f\vert_{\Omega^{\e}}$ belongs to $L^1\cap L^{p_{0}}(\R^2)$ with $p_{0}>2$, we claim that $x\mapsto K_{\R^2}[f\vert_{\Omega^{\e}}](x)$ is continuous on $\R^2$. Indeed, by standard estimates, we know that it is bounded (see \cite{Dragos}, for example), and, by the Calder\'on-Zygmund inequality we may infer that its gradient belongs to $L^{p_{0}}$. So $K_{\R^2}[f\vert_{\Omega^{\e}}]$ belongs to $W^{1,p_{0}}_{\loc}(\R^2)$ which is embedded in $\mathcal{C}(\R^2)$. Moreover, $K_{\R^2}[f\vert_{\Omega^{\e}}]$ is curl free on $B_{i}^\e$, and therefore,
$$\frac{1}{2\pi}\oint_{\partial B_{i}^\e}K_{\R^2}[f\vert_{\Omega^{\e}}](x)\cdot \hat \tau \, ds =0,$$
which concludes the proof of the lemma.
\end{proof}

To prove Proposition~\ref{C_n}, we decompose as follows:
\[K_{\R^2}[f\vert_{\Omega^{\e}}]-E\Bigl(K^\e[f\vert_{\Omega^{\e}}]+\sum_{i=1}^n m_i^\e[f] X_i^\e\Bigl) =\Bigl(K_{\R^2}[f\vert_{\Omega^{\e}}]-Ev^\e[f]\Bigl)+E\Bigl(v^\e[f]-K^\e[f\vert_{\Omega^{\e}}]-\sum_{i=1}^n m_i^\e[f] X_i^\e \Bigl).\]

\bigskip

{\bf Step 1: Convergence of the first term on the right hand side.}

We compute
\begin{eqnarray*}
K_{\R^2}[f\vert_{\Omega^{\e}}]-Ev^\e[f]&=& (1-E) K_{\R^2}[f\vert_{\Omega^{\e}}]\\
&&+\sum_{i=1}^n \frac{\na^\perp \f_i(x)}{2\pi}\int_{\Omega^\e}\ln \frac{|x-y^{\ast i,\e}|}{|x-z_i|}f(y)\, dy\\
&&+ E\sum_{i=1}^n \frac{\f_i(x)}{2\pi} \int_{\Omega^\e}\Bigl(\frac{x-y^{\ast i,\e}}{|x-y^{\ast i,\e}|^2}-\frac{x-z_i}{|x-z_i|^2}\Bigl)^\perp f(y)\, dy.
\end{eqnarray*}

As $K_{\R^2}[f\vert_{\Omega^{\e}}]$ is uniformly bounded by $C\|f\|_{L^1(\R^2)}^{\alpha}\|f\|_{L^{p_{0}}(\R^2)}^{1-\alpha}\leq CM_0$ (with $\alpha \in (0,1)$ which depends only on $p_{0}$, see \cite{Dragos}, for example $\alpha_{\infty}=1/2$), it is obvious that the first term on the right hand side tends to zero in $L^2(\R^2)$ uniformly in $f$ verifying $\| f \|_{L^1\cap L^{p_0}(\R^2)} \leq M_{0}$.

For the second term, as $y^{\ast i,\e} \in B_{i}^\varepsilon$ and $x \in \supp \nabla \varphi_{i}$, we remark that
\begin{eqnarray*}
\Bigl| \sum_{i=1}^n \frac{\na^\perp \f_i(x)}{2\pi}\int_{\Omega^\e}\ln \frac{|x-y^{\ast i,\e}|}{|x-z_i|}f(y)\, dy \Bigl| &\leq&
C \varepsilon \sum_{i=1}^n \frac{|\na^\perp \f_i(x)|}{2\pi} \| f \|_{L^{1}(\R^2)}
\end{eqnarray*}
which tends to zero in $L^2(\R^2)$ uniformly in $f$ verifying $\| f \|_{L^1(\R^2)} \leq M_{0}$.

For the last term, we decompose the integrals in two parts:
\begin{multline*}
\Big|\frac{\f_i(x)}{2\pi} \int_{\Omega^\e}\Bigl(\frac{x-y^{\ast i,\e}}{|x-y^{\ast i,\e}|^2}-\frac{x-z_i}{|x-z_i|^2}\Bigl)^\perp f(y)\, dy\Big|
\leq \frac{|\f_i(x)|}{2\pi} \Big( \int_{B(z_{i},2\varepsilon)\setminus B_{i}^\varepsilon} \Big(\frac1{|x-y^{\ast i,\e}|}+\frac{1}{|x-z_i|}\Bigl) |f(y)|\, dy\\
+ \int_{\Omega^\varepsilon\setminus B(z_{i},2\varepsilon)} \Big| \frac{x-y^{\ast i,\e}}{|x-y^{\ast i,\e}|^2}-\frac{x-z_i}{|x-z_i|^2}\Bigl| |f(y)|\, dy\Big).
\end{multline*}
For the first integral, we can verify that $|x-y^{\ast i,\e}|^2 |y-z_{i}|^2 = |y-x^{\ast i,\e}|^2 |x-z_{i}|^2$, hence we have
\begin{align*}
 \int_{B(z_{i},2\varepsilon)\setminus B_{i}^\varepsilon} \Big(\frac1{|x-y^{\ast i,\e}|}+\frac{1}{|x-z_i|}\Bigl) |f(y)|\, dy
 &\leq
 \frac{1}{|x-z_i|} \Big( \int_{B(z_{i},2\varepsilon)\setminus B_{i}^\varepsilon} \frac{2\varepsilon}{|y-x^{\ast i,\e}|} |f(y)|\, dy + \|f \mathds{1}_{B(z_{i},2\varepsilon)}\|_{L^1}\big)\\
 &\leq \frac{1}{|x-z_i|} \Big( C \varepsilon\|f\|_{L^1(\R^2)}^{\alpha}\|f\|_{L^{p_{0}}(\R^2)}^{1-\alpha} + \|f\|_{L^{p_{0}}(\R^2)} (\pi 4\varepsilon^2)^{1/p_{0}'}\Big)\\
 &\leq \frac{CM_{0}\varepsilon }{|x-z_i|},
\end{align*}
where we have used again the estimate on $K_{\R^2}[f]$ \cite{Dragos}. Concerning the second integral, we use the relation $|\frac{a}{|a|^2}-\frac{b}{|b|^2}|=\frac{|a-b|}{|a||b|}$ and that for $y\in B(z_{i},2\varepsilon)^c$ and $x\in \Omega^\varepsilon$, we have $|x-y^{\ast i,\e}| \geq |x-z_{i}| - \frac{\varepsilon^2}{|y-z_{i}|} \geq \frac{\varepsilon}2$. So we compute:
\begin{align*}
 \int_{\Omega^\varepsilon\setminus B(z_{i},2\varepsilon)} \Big| \frac{x-y^{\ast i,\e}}{|x-y^{\ast i,\e}|^2}-\frac{x-z_i}{|x-z_i|^2}\Bigl| |f(y)|\, dy
&= \int_{\Omega^\varepsilon\setminus B(z_{i},2\varepsilon)} \frac{\varepsilon^2 |y-z_{i}|^{-1}}{|x-y^{\ast i,\e}||x-z_i|} |f(y)|\, dy\\
&\leq \frac{2\varepsilon}{|x-z_i|} \int_{\Omega^\varepsilon} \frac{|f(y)|}{|y-z_i|}\, dy\leq \frac{CM_{0}\varepsilon }{|x-z_i|}.
\end{align*}
Therefore, the $L^2(\R^2)$ norm of the last term can be estimated as
\begin{align*}
 \Big\| E\sum_{i=1}^n \frac{\f_i(x)}{2\pi} \int_{\Omega^\e}\Bigl(\frac{x-y^{\ast i,\e}}{|x-y^{\ast i,\e}|^2}-\frac{x-z_i}{|x-z_i|^2}\Bigl)^\perp f(y)\, dy \Big\|_{L^2}
&\leq \Big\| E\sum_{i=1}^n |\f_i(x)| \frac{CM_{0}\varepsilon }{|x-z_i|}\Big\|_{L^2}\\
&\leq CM_{0}\varepsilon n \Big\| \frac{1}{|z|}\Big\|_{L^2(B(0,\rho/2)\setminus B(0,\varepsilon))}\\
& \leq C M_{0} \varepsilon |\ln \varepsilon|,
\end{align*}
which also tends to zero as $\varepsilon\to 0$, uniformly in $f$ verifying $\| f \|_{L^1\cap L^{p_0}(\R^2)} \leq M_{0}$.

Therefore, we have established that $\|K_{\R^2}[f\vert_{\Omega^{\e}}]-Ev^\e[f] \|_{L^2(\R^2)}\to 0$ as $\varepsilon\to 0$, uniformly in $f$ verifying $\| f \|_{L^1\cap L^{p_0}(\R^2)} \leq M_{0}$.

\bigskip

{\bf Step 2: Convergence of the second term on the right hand side.}

We define $\hat v^\e[f]=v^\e[f]-K^\e[f\vert_{\Omega^{\e}}]-\sum_{i=1}^n m_i^\e[f] X_i^\e$. By Lemma~\ref{lem correction2}, this vector field verifies
\begin{equation*}
\left\lbrace\begin{aligned}
&\div \hat v^\e[f] = 0 = \div \Big(v^\e[f] -K_{\R^2}[f\vert_{\Omega^{\e}}]\Big) &\text{ in } \Omega^\e\\
&\curl \hat v^\e[f] =\curl v^\e[f] -f = \curl \Big(v^\e[f] -K_{\R^2}[f\vert_{\Omega^{\e}}]\Big) &\text{ in } \Omega^\e\\
&\oint_{B^\e_i} \hat v^\e[f] \cdot \hat\t\, ds = 0 =\oint_{B^\e_i} \Big(v^\e[f] -K_{\R^2}[f\vert_{\Omega^{\e}}]\Big)\cdot \hat\t\, ds& \text{ for all }i=1\dots n\\
& \lim \hat v^\e[f] = 0 =\lim \Big(v^\e[f] -K_{\R^2}[f\vert_{\Omega^{\e}}]\Big) &\text{ as } |x|\to \infty\\
&\hat v^\e[f] \cdot n =0 &\text{ on } \pd \Omega^\e.
\end{aligned}\right.
\end{equation*}
This implies that $\hat v^\e[f]$ is the Leray projection\footnote{projection on divergence free vector fields which are tangent to the boundary.} of $v^\e[f] -K_{\R^2}[f\vert_{\Omega^{\e}}]$. Therefore, by orthogonality of this projection in $L^2$, we have
\[
\| \hat v^\e[f] \|_{L^2(\Omega^\varepsilon)} \leq \| v^\e[f] -K_{\R^2}[f\vert_{\Omega^{\e}}] \|_{L^2(\Omega^\varepsilon)} \leq C M_{0} \varepsilon |\ln \varepsilon|.
\]

This concludes the proof of Proposition~\ref{C_n}.

\begin{remark}
 This argument is already present in \cite{BLM}, and it explains why $L^2$ plays a special role: for every $\varepsilon$, the Leray projector is an operator in $L^2$ bounded by $1$. This argument cannot be used for the harmonic part, because we only proved estimates in $L^p$ for $p<2$, and it is not clear that the Leray projection is uniformly continuous in $L^p$ (see \cite[Section 5.3]{LM}). The Step 2 in Section~\ref{sect : B} avoids such a consideration.
\end{remark}

\subsection{Construction of $\varepsilon_k$} \

The goal of this section is to construct suitable $\varepsilon_k$, so that Theorem~\ref{main} will hold true.

So let us fix $\om_0\in L^{p_0}_{c}(\R^2)$ for some $p_0 \in (2,\infty]$. Let us also fix $R_0>0$ so that $\supp \omega_{0}\subset B(0,R_{0})$. We consider a sequence $\om_{0}^k \in C^\infty_c(B(0,R_{0}))$, $\{ \g_i^{k}\}_{i=1\dots n_k}\subset \R$, $\{ z_i^{k}\}_{i=1\dots n_k}\subset B(0,R_{0})$ such that
\[\om_{0}^k \rightharpoonup \om_{0} \text{ weakly in } L^{p_{0}}(\R^2)\]
and
\[ \sum_{i=1}^{n_k} \g_i^{k} \d_{z_i^k} \rightharpoonup \mu \text{ weak-$*$ in }\ \M(\R^2),\]
for some $\mu\in \M_c(\R^2)$. The first limit allows us to define $M_{0}\in \R_+$ as following:
\begin{equation}\label{defi M0}
M_{0}:= \sup_{k\in \N} \{ \| \om_{0}^k \|_{L^1\cap L^{p_{0}}(\R^2)}\}.
\end{equation}
The second limit implies that $\| \sum_{i=1}^{n_k} \g_i^{k} \d_{z_i^k} \|_{\M(\R^2)}$ is uniformly bounded in $k$, so there exists $M_{1} \in \R_+$ such that:
\begin{equation}\label{defi gM}
|\g_i^{k}| \leq M_{1}, \quad \forall k\in \N, \ \forall i=1\dots n_{k}.
\end{equation}

Now, we fix $k\in \N^*$, and we are looking for a definition of $\varepsilon_k$.

By Proposition~\ref{B_n}, there exists $\e_{1}^k$ such that for any $\e\in (0,\e_{1}^k)$, there is $v_{i}^\e$ such that
\[
\Bigl\| \frac{(x-z_i)^\perp}{2\pi |x-z_i|^2} -Ev_{i}^\e(x) \Bigl\|_{L^1\cap L^{2-\frac1k}(\R^2)}+\|Ev_{i}^\e -EX_i^{\e,k} \Bigl\|_{L^2(\R^2)}\leq \frac{1}{n_{k}M_{1} k},\]
for any $i=1\dots n_k$.

By Proposition~\ref{C_n}, with $M_{0}$ defined in \eqref{defi M0}, there exists $\e_{2}^k$ such that for any $\e\in (0,\e_{2}^k)$ and $f$ verifying:
\[ \| f \|_{L^1\cap L^{p_0}(\R^2)} \leq M_{0} \quad \text{and} \quad f \text{ is compactly supported,}
\]
we have
\[
\Bigl\| K_{\R^2}[f\vert_{\Omega^{\e,k}}]-EK^{\e,k}[f\vert_{\Omega^{\e,k}}]-\sum_{i=1}^{n_k} m_i^{\e,k}[f] EX_i^{\e,k}\Bigl\|_{L^{2}(\R^2)} \leq \frac{1}{k}.
\]

We recall that $\e_{1}^k$ and $\e_{2}^k$ are chosen small enough such that the disks are disjoints. We finally choose:
\begin{equation}\label{defi ek}
\varepsilon_k:=\min\{ \e_{1}^k,\e_{2}^k,\frac1k \}.
\end{equation}


\section{Time evolution}\label{sect time}

The goal of this section is to prove Theorem~\ref{main}, using Propositions~\ref{A_n}, \ref{B_n} and \ref{C_n}.

So let us fix $\om_0\in L^{p_0}_{c}(\R^2)$ for some $p_0 \in (2,\infty]$. Let us also fix $R_0>0$ so that $\supp \omega_{0}\subset B(0,R_{0})$. We consider a sequence $\om_{0}^k \in C^\infty_c(B(0,R_{0}))$, $\{ \g_i^{k}\}_{i=1\dots n_k}\subset \R$, $\{ z_i^{k}\}_{i=1\dots n_k}\subset B(0,R_{0})$ such that
\[\om_{0}^k \rightharpoonup \om_{0} \text{ weakly in } L^{p_{0}}(\R^2)\]
and
\[ \sum_{i=1}^{n_k} \g_i^{k} \d_{z_i^k} \rightharpoonup \mu \text{ weak-$*$ in }\ \M(\R^2),\]
for some $\mu\in \M_c(\R^2)$.
As in the previous subsection, we introduce:
\begin{equation*}
M_{0}:= \sup_{k\in \N} \{ \| \om_{0}^k \|_{L^1\cap L^{p_{0}}(\R^2)}\}.
\end{equation*}
and $M_{1} \in \R_+$ by
\begin{equation*}
M_{1}:= \sup_{k\in \N,\ i=1\dots n_{k}} \{ |\g_i^{k}| \}.
\end{equation*}
For the sequence of radii $\varepsilon_k$ chosen in \eqref{defi ek}, we consider the domains
\[\Omega^{k}:= \R^2\setminus \Bigl(\overline{\bigcup_{i=1}^{n_k} B(z_i^k,\varepsilon_k)}\Bigl).\]

\subsection{Euler equations and vorticity estimates}\

As mentioned in Subsection~\ref{sect BS}, there exists a unique smooth vector field $u_0^k$ verifying \eqref{ini elliptic}. For such initial data, Kikuchi \cite{kiku} established existence and uniqueness of a global strong solution $u^k$ of \eqref{euler-velocity} in $\OM^k$. Moreover, he proved that this solution verifies \eqref{vort eq} in the sense of distributions. Thanks to the regularity of $u^k$, the method of characteristics implies that
\begin{itemize}
\item $\om^k(t,\cdot)$ is compactly supported for any $t \in \R_+$ (not uniformly);
 \item $\int_{\Omega^k}\omega^k(t,\cdot)$ is a conserved quantity;
 \item the $L^q$ norms of the vorticity are conserved for any $q\in [1,\infty]$.
\end{itemize}
Moreover, by Kelvin's Circulation Theorem,
\begin{itemize}
\item the circulation around each disk $B(z_{i}^k,\varepsilon_k)$ is conserved:
$$\oint_{\partial B_i^k} u^k(t,\cdot) \cdot \hat\t ds= \g_i^k,\ \forall t\geq 0.$$
\end{itemize}

Therefore, we infer that
\begin{equation}\label{vorticity est}
\| E \om^k(t,\cdot) \|_{L^1\cap L^{p_{0}}(\R^2)} = \| E\om^k_{0} \|_{L^1\cap L^{p_{0}}(\R^2)} \leq M_{0},\quad \forall t\in \R_+,\ \forall k\in \N^*.
\end{equation}

\subsection{Velocity estimates}\label{sect : estimates}\

We use the Biot-Savart law:
\begin{equation*}
u^k(t,x)=K^k[\om^k(t,\cdot)](x)+\sum_{i=1}^{n_{k}} \a_i^k[\om^k(t,\cdot)] X_i^k(x) \text{ with }\a_i^k[\om^k(t,\cdot)] = \int_{\Omega^k} \F_i^k(y) \om^k(t,y) \, dy +\g_i^k,
\end{equation*}
 together with Propositions~\ref{A_n}, \ref{B_n} and \ref{C_n} in order to get $L^p$ estimates for the velocity on compact sets.

\begin{proposition} \label{velocity estimate} With the above definitions, $E u^k$ is uniformly bounded in $L^{\infty}(\R_+, L^p_{\loc}(\R^2))$, for any $p\in [1,2)$. More precisely, for any $p\in (1,2)$, we can decompose the velocity as $E u^k=u_{1}^k+u_{2}^k+u_{3}^k$ such that
\[
\| u_{1}^k \|_{L^{\infty}(\R_+, L^p(\R^2))}, \ \| u_{2}^k \|_{L^{\infty}(\R_+, L^2(\R^2))},\ \| u_{3}^k \|_{L^{\infty}(\R_+, L^{p^*}(\R^2))} \text{ are uniformly bounded,}
\]
with
$p^*=(\frac1p-\frac12)^{-1}\in (2,\infty)$.
\end{proposition}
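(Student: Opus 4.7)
The plan is to apply the algebraic identity set up just before Proposition~\ref{A_n}, evaluated at $f=\om^k(t,\cdot)$ and $\e=\e_k$. Writing $\a_i^k[\om^k]=m_i^k[\om^k]+\g_i^k$ in the Biot--Savart formula for $u^k$, that identity becomes
\[
Eu^k(t,\cdot) = K_{\R^2}[\mu] + K_{\R^2}[E\om^k(t,\cdot)] - A_k - B_{\e_k,k} - C_{\e_k,k}[\om^k(t,\cdot)].
\]
I would then sort the five terms on the right into an $L^p$ piece ($u_1^k$), an $L^2$ piece ($u_2^k$) and an $L^{p^*}$ piece ($u_3^k$), each uniformly bounded in $t\in\R_+$ and $k\in\N$. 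The two reductive facts are: the vorticity bound \eqref{vorticity est}, giving $\|E\om^k(t,\cdot)\|_{L^1\cap L^{p_0}}\leq M_0$ uniformly in $t,k$; and the three decay estimates of Propositions~\ref{A_n}, \ref{B_n}, \ref{C_n}, calibrated by the definition \eqref{defi ek} of $\e_k$ against $n_k$ and $M_1$ so as to absorb all sums over $i=1,\dots,n_k$.

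The two full-plane terms are handled by standard inequalities. By interpolation, $\|E\om^k(t,\cdot)\|_{L^p}\leq M_0$, and then Hardy--Littlewood--Sobolev (cf.\ Subsection~\ref{number_obstacle}) yields $\|K_{\R^2}[E\om^k(t,\cdot)]\|_{L^{p^*}}\leq C_p M_0$, which joins $u_3^k$. For $K_{\R^2}[\mu]$, I split the Biot--Savart kernel as $\frac{x^\perp}{2\pi|x|^2}=G_1(x)+G_2(x)$, with $G_1$ the restriction to $\{|x|\leq 1\}$ and $G_2$ its complement, so that $G_1\in L^p(\R^2)$ and $G_2\in L^{p^*}(\R^2)$ (since $1<p<2<p^*$). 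Minkowski's integral inequality then gives $\|G_l*\mu\|_{L^{p_l}}\leq \|G_l\|_{L^{p_l}}\|\mu\|_{\M}$, so $K_{\R^2}[\mu]$ contributes one time- and $k$-independent term to $u_1^k$ and one to $u_3^k$.

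For the three error terms, Proposition~\ref{A_n} directly writes $A_k=A_{k,1}+A_{k,2}$ with $A_{k,1}\to 0$ in $L^p$ and $A_{k,2}\to 0$ in $L^{p^*}$, so $A_{k,1}$ joins $u_1^k$ and $A_{k,2}$ joins $u_3^k$. For $B_{\e_k,k}=\sum_{i=1}^{n_k}\g_i^k\bigl(\tfrac{(x-z_i^k)^\perp}{2\pi|x-z_i^k|^2}-EX_i^{\e_k,k}\bigr)$, I insert the intermediate $Ev_i^{\e_k}$ produced by Proposition~\ref{B_n}: the calibration \eqref{defi ek} gives, for every $k$ and every $i$, the bounds $\|\tfrac{(x-z_i^k)^\perp}{2\pi|x-z_i^k|^2}-Ev_i^{\e_k}\|_{L^1\cap L^{2-1/k}}\leq \tfrac{1}{n_k M_1 k}$ and $\|Ev_i^{\e_k}-EX_i^{\e_k,k}\|_{L^2}\leq \tfrac{1}{n_k M_1 k}$. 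Summing with weights $|\g_i^k|\leq M_1$ (and using the embedding $L^1\cap L^{2-1/k}\hookrightarrow L^p$ once $2-1/k\geq p$) delivers an $L^p$-contribution to $u_1^k$ of size $\leq 1/k$ and an $L^2$-contribution to $u_2^k$ of size $\leq 1/k$. Finally, Proposition~\ref{C_n} applies to $f=\om^k(t,\cdot)$ uniformly in $(t,k)$ by \eqref{vorticity est}, and combined with \eqref{defi ek} puts an $L^2$-contribution of size $\leq 1/k$ into $u_2^k$. The local $L^p$ bound for $p=1$ then follows from any of the $L^p$ bounds with $p\in(1,2)$ via H\"older's inequality.

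I do not expect a genuine obstacle: the three propositions contain all the analytic work, and \eqref{defi ek} was chosen precisely so that the sums over $i=1,\dots,n_k$ can be absorbed. The one subtle point worth writing out is that the Proposition~\ref{B_n} estimate is cast in $L^1\cap L^{2-1/k}$, which only embeds into the fixed $L^p$ once $k\geq k_0(p)$; since uniform boundedness past some $k_0(p)$ suffices, the finitely many early terms are absorbed into the constant, and the argument goes through cleanly.
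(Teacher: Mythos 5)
Your proposal is correct and follows essentially the same route as the paper: the same decomposition of $Eu^k$ against $K_{\R^2}[E\om^k(t,\cdot)+\mu]$ into $A_k+B_k+C_k$, the same use of \eqref{vorticity est}, \eqref{defi ek} and Propositions~\ref{A_n}--\ref{C_n} to absorb the sums over $i=1,\dots,n_k$, and the same sorting of terms into $L^p$, $L^2$ and $L^{p^*}$ pieces. The only (harmless) variation is your treatment of $K_{\R^2}[\mu]$ by splitting the Biot--Savart kernel and applying Young's inequality for measures, where the paper instead reuses the $W^{-1,p}$ decomposition from the proof of Proposition~\ref{A_n}; both give the same $L^p+L^{p^*}$ splitting.
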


\begin{proof} Fix $p\in [1,2)$. We choose $k_1$, such that, for any $k\geq k_1$, we have:
\begin{eqnarray}
2-\frac1k\geq p\label{k2 cond2}.
\end{eqnarray}

We compare $Eu^k$ with
\[\mathbf{w}^k(t,x)=K_{\R^2}[E\om^k(t,\cdot)+\mu](x),\]
by decomposing
\begin{eqnarray*}
(\mathbf{w}^k-Eu^k)(t,x) &=& \Bigl( K_{\R^2}[\mu]-K_{\R^2}[\sum_{i=1}^{n_k} \g_i^k \d_{z_i^k}]\Bigl)(x) \\
&&+\sum_{i=1}^{n_k} \g_i^{k }\Bigl( \frac{(x-z_i^k)^\perp}{2\pi |x-z_i^k|^2} - EX_i^k(x)\Bigl)\\
&&+ \Bigl( K_{\R^2}[E\om^k(t,\cdot)]-EK^k[\om^k(t,\cdot)]-\sum_{i=1}^{n_k} m_i^k[\om^k(t,\cdot)] EX_i^k\Bigl)(x)\\
&=:& A_k(x) + B_k(x) + C_k[\om^k(t,\cdot)](x).
\end{eqnarray*}

By Proposition~\ref{A_n}, we have that
\[
\| A_{k,1} \|_{L^p(\R^2)},\ \| A_{k,2} \|_{L^{p^*}(\R^2)}\text{ are uniformly bounded},
\]
with $A_{k}=A_{k,1}+A_{k,2}$.

Thanks to \eqref{k2 cond2}, we can interpolate $L^p$ between $L^1$ and $L^{2-\frac1k}$; hence we deduce from the definition of $\e_{k}$ \eqref{defi ek} that
\[
\| B_{k,1} \|_{L^p(\R^2)} + \| B_{k,2} \|_{L^{2}(\R^2)} \leq \frac1k,
\]
with $B_{k,1}:=\sum_{i=1}^{n_k} \g_i^{k }( \frac{(x-z_i^k)^\perp}{2\pi |x-z_i^k|^2} - E v_i^k(x))$ and $B_{k,2}:=\sum_{i=1}^{n_k} \g_i^{k }( E v_i^k(x))-EX_{i}^k)$.

We already know from \eqref{vorticity est} that
\[ \| \ E\om^{k}(t,\cdot)\|_{L^1\cap L^{p_0}(\R^2)} \leq M_0, \forall t, \ \forall k.\]
As $E\om^{k}$ is compactly supported for all $t,k$, we deduce from the definition of $\e_{k}$ that
\[
\| C_k[\om^k(t,\cdot)] \|_{L^{2}(\R^2)} \leq \frac1k, \forall t, \ \forall k.
\]

By Hardy-Littlewood-Sobolev Theorem, we have
\[ \|K_{\R^2}[E\om^k(t,\cdot)] \|_{L^{p^*}(\R^2)} \leq C_{p}\|E\om^k(t,\cdot) \|_{L^{p}(\R^2)} \leq C_{p} M_{0}, \quad \forall t\in \R_+, \ \forall k\in \N^*,\]
hence, $\| K_{\R^2}[E\om^k(t,\cdot)] \|_{L^\infty(\R_+;L^{p^*}(\R^2))} $ is uniformly bounded.

It is clear from Subsection~\ref{number_obstacle} (which is the proof of Proposition~\ref{A_n}) that
$K_{\R^2}[\mu]$ is the sum of a function belonging to $L^{p}(\R^2)$ and a function belonging to $L^{p^*}(\R^2)$.

Finally, we can put together all the estimates to obtain that
\[
Eu^k(t,x)=K_{\R^2}[\mu](x)+K_{\R^2}[E\om^k(t,\cdot)](x) -A_k(x) - B_k(x) - C_k[\om^k(t,\cdot)](x)
\]
verifies the statement of the proposition.
\end{proof}

\subsection{Strong convergence for the velocities}\

First, we use standard compactness argument to extract a subsequence such that $K_{\R^2}[E\om^k(t,\cdot)]$ converges strongly.

\begin{proposition}\label{comp-vort}
There exists a subsequence, still denoted by $k$, and a function $\om \in L^\infty(\R_+; L^1\cap L^{p_0}(\R^2))$, such that
\begin{itemize}
\item ${\om}^{k} \rightharpoonup {\om}$ weak $*$ in $L^\infty(\R_+; L^{q}(\R^2))$ for any $q\in [1,p_{0}]$;
\item $K_{\R^2}[E\om^k] \to K_{\R^2}[\om]$ strongly in $L^2_{\loc}(\R_+\times\R^2)$.
\end{itemize}
\end{proposition}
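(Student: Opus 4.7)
The strategy is the classical weak-compactness plus Aubin-Lions scheme, using the vorticity bound \eqref{vorticity est} and the velocity bounds from Proposition~\ref{velocity estimate}.

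For the first conclusion, the uniform bound \eqref{vorticity est} places $E\om^k$ in a bounded subset of $L^\infty(\R_+; L^1 \cap L^{p_0}(\R^2))$, and, by interpolation, of $L^\infty(\R_+; L^q(\R^2))$ for every $q \in [1, p_0]$. For $q \in (1, p_0]$, $L^\infty(\R_+; L^q(\R^2))$ is the dual of the separable Banach space $L^1(\R_+; L^{q'}(\R^2))$, so Banach-Alaoglu yields weak-$*$ convergent subsequences. Choosing a countable dense family of exponents in $(1, p_0]$ and diagonally extracting produces a single subsequence and a limit $\om$, with the limit being the same function for every $q$ since its action on smooth compactly supported test functions is determined uniquely. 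The endpoint $q = 1$ is then recovered from Fatou applied to the uniform mass bound.

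For the second conclusion, the plan is an Aubin-Lions argument applied to $K_{\R^2}[E\om^k]$ on cylinders $(0,T) \times B_R$. On the spatial side, Hardy-Littlewood-Sobolev bounds $K_{\R^2}[E\om^k]$ uniformly in $L^\infty(\R_+; L^{p^*}(\R^2))$, while Calder\'on-Zygmund bounds $\nabla K_{\R^2}[E\om^k]$ in $L^\infty(\R_+; L^{p_0}(\R^2))$; hence $K_{\R^2}[E\om^k]$ is uniformly bounded in $L^\infty(\R_+; W^{1,p_0}(B_R))$, which embeds compactly into $L^2(B_R)$. For the temporal estimate, I would start from the transport equation $\pd_t \om^k + u^k \cdot \na \om^k = 0$, use $\div u^k = 0$ together with the tangency $u^k \cdot \hat n = 0$ on $\pd \Omega^k$ to integrate by parts against test functions, and conclude that
\[
\pd_t E\om^k + \div E(u^k \om^k) = 0 \quad \text{in } \mathcal{D}'(\R_+ \times \R^2).
\]
By Proposition~\ref{velocity estimate}, $Eu^k$ is uniformly bounded in $L^\infty(\R_+; L^p_{\loc}(\R^2))$ for any $p \in [1, 2)$; combined with $E\om^k \in L^\infty(\R_+; L^{p_0})$ and $p_0 > 2$, H\"older gives $E(u^k \om^k)$ uniformly in $L^\infty(\R_+; L^r_{\loc}(\R^2))$ for some $r > 1$. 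Applying $K_{\R^2}$, which commutes with $\pd_t$, and observing that $K_{\R^2} \circ \div$ is a matrix of Calder\'on-Zygmund-type operators bounded on $L^r$, I obtain a uniform bound on $\pd_t K_{\R^2}[E\om^k]$ in $L^\infty(\R_+; L^r_{\loc}(\R^2))$. Aubin-Lions then furnishes a subsequence converging strongly in $L^2((0,T) \times B_R)$, and a further diagonal extraction upgrades this to $L^2_{\loc}(\R_+ \times \R^2)$. To identify the limit, I use that the Biot-Savart kernel $(x-y)^\perp/|x-y|^2$ is locally integrable in the plane, so $K_{\R^2}$ is continuous from the weak-$*$ topology on $L^\infty(\R_+; L^{p_0})$ into distributions; this forces the strong limit to coincide with $K_{\R^2}[\om]$.

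The main technical obstacle is the temporal regularity step: one must justify carefully, exploiting the tangency condition on the moving boundary $\pd \Omega^k$, that the extension by zero of $\om^k$ really satisfies the flux-form transport equation on the whole plane, and then verify that the integrability exponent $r > 1$ produced by H\"older is enough to both apply the Calder\'on-Zygmund theorem for commuting $K_{\R^2}$ past the divergence and close the Aubin-Lions embedding chain. The narrow velocity integrability (strictly below $L^2_{\loc}$, caused by non-vanishing circulations in the limit measure $\mu$) leaves essentially no room to spare, which is why the compactness argument must be run locally on bounded cylinders rather than in global spaces.
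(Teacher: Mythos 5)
Your overall strategy is sound but genuinely different from the paper's. The paper does not run Aubin--Lions on the velocity: it works at the level of the vorticity, showing that $(E\om^k,\Phi)$ is uniformly equicontinuous in time for $\Phi\in H^3(\R^2)$ (using precisely the three-term decomposition $u^k_1+u^k_2+u^k_3$ of Proposition~\ref{velocity estimate} paired against $\om^k$ in the conjugate Lebesgue spaces), and then invokes Lemma~C.1 of \cite{PLL} to get relative compactness of $E\om^k$ in $C([0,T];L^{3/2}\cap L^{p_1}(\R^2)-w)$. The strong $L^2_{\loc}$ convergence of $K_{\R^2}[E\om^k]$ is then obtained \emph{pointwise}, by splitting the Biot--Savart kernel into a near-field piece in $L^{p_1'}$ and a far-field piece in $L^3$ and applying dominated convergence twice, using the uniform $L^\infty$ bound on $K_{\R^2}[E\om^k]$ from \cite{Dragos}. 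Your route trades this for a compact Sobolev embedding $W^{1,p_0}(B_R)\hookrightarrow\hookrightarrow L^2(B_R)$ plus a bound on $\pd_t K_{\R^2}[E\om^k]$; it is more standard-looking, while the paper's version yields the (slightly stronger, though unused) time-uniform weak continuity of the vorticity. Both arguments ultimately rest on the same velocity decomposition, since that is the only way to pair $u^k$ with $\om^k$ when $u^k$ fails to be locally square-integrable.

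The one step you must repair is the temporal estimate. You derive that $E(u^k\om^k)$ is bounded in $L^\infty(\R_+;L^r_{\loc}(\R^2))$ and then apply the Calder\'on--Zygmund boundedness of $K_{\R^2}\circ\div$ on $L^r$. But that operator is non-local: a bound on the input in $L^r_{\loc}$ gives no control whatsoever on the output in $L^r_{\loc}$ (mass of $u^k\om^k$ far away contributes at every point). You need a \emph{global} bound on $E(u^k\om^k)$ in $L^{r}(\R^2)$, and this is in fact available: Proposition~\ref{velocity estimate} provides global norms $\|u^k_1\|_{L^p(\R^2)}$, $\|u^k_2\|_{L^2(\R^2)}$, $\|u^k_3\|_{L^{p^*}(\R^2)}$, and pairing each piece with $\om^k\in L^s(\R^2)$ for a suitable $s\in(2,p_0]$ (choosing $p$ close enough to $2$ that $p'<p_0$) yields $u^k_j\om^k$ bounded in $L^{r_j}(\R^2)$ with each $r_j>1$, uniformly in $t$ and $k$. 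With that correction the Calder\'on--Zygmund step, and hence the Aubin--Lions chain, closes. Your identification of the limit is fine provided you pair against the weak-$*$ convergence in $L^\infty(\R_+;L^q)$ for some $q<2$ (not $q=p_0$), since the transposed kernel $K_{\R^2}[\f]$ decays only like $|y|^{-1}$ and thus lies in $L^{q'}(\R^2)$ only for $q'>2$.
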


\begin{proof}
From \eqref{vorticity est} and by the Banach-Alaoglu's theorem, we can extract a subsequence which converges weak $*$ in $L^\infty(\R_+; L^{1}\cap L^{p_{0}}(\R^2))$, which gives the first item.

We consider $p\in (1,2)$ such that $p'=(1+p^{-1})^{-1}\in (2,p_{0}]$ (we can take $p=p_{0}'$ if $p_{0}<\infty$, and $p=3/2$ otherwise). Let fix $T>0$, and we set $X= L^{3/2}(\R^2)$ and $Y=H^{-3}(\R^2)$. Then $X$ is a separable reflexive Banach space and $Y$ is a Banach space such that $X\hookrightarrow Y$ and $Y'$ is separable and dense in $X'$. By \eqref{vorticity est}, we already know that $\{ E \om^k \}$ is a bounded sequence in $L^\infty(0,T; X)$. Moreover, for any function $\F \in Y'=H^3(\R^2)$, and any $t\in [0,T]$, we use the equation verified by $\omega^k$ to get
\begin{eqnarray*}
(\pd_t E\om^k,\F)&=& -\int_{\OM^k} \div( u^k \om^k) \F =\int_{\OM^k} \om^k u^k \cdot \nabla \F \\
&\leq& (\| \omega^k \|_{L^{p'}} \| u^k_{1} \|_{L^{p}}+\| \omega^k \|_{L^{2}} \| u^k_{2} \|_{L^{2}}+\| \omega^k \|_{L^{(p^*)'}} \| u^k_{3} \|_{L^{p^*}} ) \| \nabla \F \|_{L^\infty}\\
& \leq& C \| \F \|_{H^3}.
\end{eqnarray*}
where we have used Proposition~\ref{velocity estimate} together with \eqref{vorticity est}. This means that $ E \om^k \in C([0,T];Y)$ and that for all $\F \in Y'$, $(E\om^k,\F)_{Y\times Y'}$ is uniformly continuous in $[0,T]$, uniformly in $k$. Therefore, Lemma C.1 of \cite{PLL} states that $\{ E \om^k \}$ is relatively compact in $C([0,T]; X-w)$.

Fixing $p_{1} \in (2,p_{0})$, this argument also holds with $X=L^{p_{1}}(\R^2)$. Therefore, we can extract a subsequence such that
\[
E\om^k \to \omega \text{ in } C([0,T]; L^{3/2}\cap L^{p_{1}}(\R^2)-w).
\]

Now, we prove that this implies that the function
\[
(t,x)\mapsto K_{\R^2}[(\om-E\om^k)(t,\cdot)](x)
\]
tends to zero in $L^2([0,T];L^{2}(K))$ when $k\to \infty$, for any $K$ compact subset of $\R^2$.

For $t\in [0,T]$ and $x\in K$ fixed, the map $y\mapsto \frac{(x-y)^\perp}{|x-y|^2}\mathds{1}_{\{y\in B(x,1)\}}$ belongs to $L^{p_{1}'}(\R^2)$ whereas $y\mapsto \frac{(x-y)^\perp}{|x-y|^2}\mathds{1}_{\{y\in B(x,1)^c\}}$ belongs to $L^{3}(\R^2)$, hence
\[
|\int_{\R^2} \frac{(x-y)^\perp}{|x-y|^2} (E\om^k-\om)(t,y)\, dy |\to 0 \text{ when } k\to\infty.
\]
Moreover, using the estimate of \cite{Dragos}, we have the uniform bound:
\[
|\int_{\R^2} \frac{(x-y)^\perp}{|x-y|^2} (E\om^k-\om)(t,y)\, dy |\leq C 2 M_{0}, \ \forall t\in [0,T],\ \forall x\in \R^2,\ \forall k\in \N^*.
\]
Hence, applying twice the dominated convergence theorem, we obtain the convergence of $K_{\R^2}[E\om^k-\om]$ in $L^2([0,T];L^{2}(K))$.

By a diagonal extraction on $T=N\in \N$, we can choose a subsequence holding for all $T$.
\end{proof}

The first item of the above proposition gives the assertion (b) in Theorem~\ref{main}.

The main result of this subsection is the following.

\begin{theorem}\label{theo : velocity} With the sequence $\{\om^k\}$ constructing in Proposition~\ref{comp-vort}, we have
that for any $p\in [1,2)$:
\[ Eu^k\longrightarrow u \text{ strongly in } L^2_{\loc}(\R_+; L^p_{\loc}(\R^2)) \text{ when } k\to \infty,\]
with $u=K_{\R^2}[\om+\mu]$.
\end{theorem}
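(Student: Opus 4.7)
The plan is to leverage the explicit decomposition established in the proof of Proposition~\ref{velocity estimate}, namely
\[
Eu^k(t,x) - u(t,x) = \bigl(K_{\R^2}[E\om^k(t,\cdot)] - K_{\R^2}[\om(t,\cdot)]\bigr)(x) - A_k(x) - B_k(x) - C_k[\om^k(t,\cdot)](x),
\]
where the cancellation of the common $K_{\R^2}[\mu]$ term uses the identity $u=K_{\R^2}[\om+\mu]$. Fixing $T>0$, a compact set $K\subset\R^2$ and $p\in[1,2)$, it then suffices to show that each of the four summands tends to zero in $L^2(0,T;L^p(K))$.

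The vorticity piece is essentially given to us: Proposition~\ref{comp-vort} already delivers strong convergence of $K_{\R^2}[E\om^k]$ to $K_{\R^2}[\om]$ in $L^2_{\loc}(\R_+\times\R^2)$, and the embedding $L^2(K)\hookrightarrow L^p(K)$ transfers this into $L^2(0,T;L^p(K))$ whenever $p\leq 2$. The harmonic corrections $A_k$ and $B_k$ do not depend on time, so I would simply invoke Proposition~\ref{A_n} to get $A_k=A_{k,1}+A_{k,2}$ with $\|A_{k,1}\|_{L^p(\R^2)}+\|A_{k,2}\|_{L^{p^*}(\R^2)}\to 0$, and the continuous embedding $L^{p^*}(K)\hookrightarrow L^p(K)$ to conclude $\|A_k\|_{L^p(K)}\to0$; the same argument applied via Proposition~\ref{B_n} together with the choice \eqref{defi ek} of $\varepsilon_k$ yields the analogous $L^p\cap L^2$ decay of $B_k$, and a fortiori $\|B_k\|_{L^p(K)}\to 0$.

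The genuinely time-dependent term is $C_k[\om^k(t,\cdot)]$, and this is where I would expect the main subtlety, since one cannot apply a pointwise-in-$f$ estimate to a time-varying vorticity without uniformity. Fortunately, Proposition~\ref{C_n} was deliberately phrased uniformly in $f$ over the class of compactly supported functions with $\|f\|_{L^1\cap L^{p_0}(\R^2)}\leq M_0$. By Kelvin's theorem and conservation of the $L^q$ norms of vorticity along the Euler flow, one has $\|\om^k(t,\cdot)\|_{L^1\cap L^{p_0}(\Omega^k)}=\|\om^k_0\|_{L^1\cap L^{p_0}}\leq M_0$ for every $t$ and $k$, and $\om^k(t,\cdot)$ remains compactly supported for each fixed $(t,k)$ by the method of characteristics. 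The choice \eqref{defi ek} therefore forces $\|C_k[\om^k(t,\cdot)]\|_{L^2(\R^2)}\leq 1/k$ uniformly in $t\in\R_+$, and a final use of $L^2(K)\hookrightarrow L^p(K)$ followed by integration in $t$ over $[0,T]$ completes the proof. The whole argument thus reduces to assembling the four convergences, the key conceptual point being that the three propositions of Section~\ref{sect u0} were engineered so that their constants accommodate the time-dependent application to $\om^k(t,\cdot)$.
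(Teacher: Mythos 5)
Your proof is correct and follows essentially the same route as the paper: the same four-term decomposition of $u-Eu^k$, with $A_k$ handled by Proposition~\ref{A_n}, $B_k$ by Proposition~\ref{B_n} through the choice \eqref{defi ek} (plus the implicit requirement $2-\frac1k\geq p$ so that the $L^1\cap L^{2-1/k}$ bound interpolates to $L^p$), $C_k$ by the uniform-in-$f$ statement of Proposition~\ref{C_n} combined with the conserved bound \eqref{vorticity est}, and the remaining vorticity term by Proposition~\ref{comp-vort}, all transferred to $L^2([0,T];L^p(K))$ via the embeddings on compact sets. You correctly identified the key point, namely that the uniformity of Proposition~\ref{C_n} in $f$ is what permits its application to the time-dependent family $\om^k(t,\cdot)$.
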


In particular, this result implies part (a) of Theorem~\ref{main}.

\begin{proof} Let us fix $p\in [1,2)$, $T>0$ and $K$ a compact subset of $\R^2$. We also fix $\rho>0$ and we are looking for $k_{\rho}$ such that for all $k\geq k_{\rho}$ we have:
\[
\| u-Eu^k \|_{L^2([0,T]; L^p(K))}\leq \rho.
\]
First, we note that there exists $k_1$, such that for any $k\geq k_1$, we have:
\begin{eqnarray}
2-\frac1k\geq p.\label{k3 cond2}
\end{eqnarray}

As usual, we write:
\begin{eqnarray*}
(u-Eu^k)(t,x) &=& \Bigl( K_{\R^2}[\mu]-K_{\R^2}[\sum_{i=1}^{n_k} \g_i^k \d_{z_i^k}]\Bigl)(x) \\
&&+\sum_{i=1}^{n_k} \g_i^{k }\Bigl( \frac{(x-z_i^k)^\perp}{2\pi |x-z_i^k|^2} - EX_i^k(x)\Bigl)\\
&&+ \Bigl( K_{\R^2}[E\om^k(t,\cdot)]-EK^k[\om^k(t,\cdot)]-\sum_{i=1}^{n_k} m_i^k[\om^k(t,\cdot)] EX_i^k\Bigl)(x)\\
&&+K_{\R^2}[(\om-E\om^k)(t,\cdot)](x)\\
&=:& A_k(x) + B_k(x) + C_k[\om^k(t,\cdot)](x)+K_{\R^2}[(\om-E\om^k)(t,\cdot)](x).
\end{eqnarray*}

By Proposition~\ref{A_n}, there exists $k_{2}$ such that for any $k\geq k_2$, we have $\| A_{k} \|_{L^2([0,T];L^{p}(K))}\leq \rho/4.$
By the definition of $\e_{k}$ (see \eqref{defi ek}) together with \eqref{k3 cond2}, we see that there exists $k_{3}\geq k_{1}$ such that for any $k\geq k_3$
\[
\| B_{k} \|_{L^2([0,T];L^{p}(K))} \leq \rho/4.
\]

Moreover, we already know from \eqref{vorticity est} that
$\| \ E\om^{k}(t,\cdot)\|_{L^1\cap L^{p_0}(\R^2)} \leq M_0,$ for every $t\in[0,T]$, and all $k$,
so we deduce from the definition of $\e_{k}$ that there exists $k_{4}$ such that for any $k\geq k_4$, we have:
\[
\| C_k[\om^k(t,\cdot)] \|_{L^2([0,T];L^{p}(K))}\leq \rho/4.
\]

By Proposition~\ref{comp-vort}, it also clear that there exists $k_{5}$ such that for any $k\geq k_5$, we have:
\[
\| K_{\R^2}[(\om-E\om^k)(t,\cdot)](x)\|_{L^2([0,T];L^{p}(K))}\leq \rho/4.
\]

Denoting $k_{\rho}=\max_{i=1\dots 5}\{k_{i}\}$, we have proved that for any $k\geq k_{\rho}$
\[
\| u-Eu^k \|_{L^2([0,T]; L^p(K))}\leq \rho,
\]
which concludes the proof.
\end{proof}

\subsection{Passing to the limit in the Euler equations}\

We have obtained the convergence of the velocity and of the vorticity as required in points (a)-(b) of Theorem~\ref{main}.

The purpose of the rest of this section is to prove the point (c), namely that $u$ and $\om$ verify, in an appropriate sense, the system:
\begin{equation*}
\left\lbrace\begin{aligned}
&\pd_t {\om}+u\cdot \na {\om}=0 &\text{ in } (0,\infty) \times \R^2 \\
&\div u =0 &\text{ in } (0,\infty) \times \R^2 \\
&\curl u ={\om} + \mu &\text{ in } (0,\infty) \times \R^2 \\
&\lim_{|x|\to\infty}|u|=0 & \text{ for }t\in[0,\infty)\\
&{\om}(0,x)={\om}_0(x) &\text{ in } \R^2
\end{aligned}\right.
\end{equation*}

\begin{definition} The pair $(u,\om)$ is a weak solution of the previous system if
\begin{itemize}
\item[(a)] for any test function $\f\in C^\infty_c([0,\infty)\times\R^2)$ we have
$$\int_0^\infty\int_{\R^2}\f_t\om \ dxdt +\int_0^\infty \int_{\R^2}\na\f\cdot u\om \ dxdt+\int_{\R^2}\f(0,x)\om_0(x)\ dx=0,$$
\item[(b)] we have $\div u=0$ and $\curl u=\om+\mu$ in the sense of distributions of $\R^2$, with $|u|\to 0$ at infinity.
\end{itemize}
\end{definition}

\begin{theorem} The pair $(u,\om)$ obtained is a weak solution of the previous system.
\end{theorem}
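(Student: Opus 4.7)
The plan is to start from the weak form of the vorticity equation on each $\Omega^k$, write it in terms of the zero-extension $E$, and pass to the limit using Theorem~\ref{theo : velocity} together with Proposition~\ref{comp-vort}. Fix $\varphi\in C^\infty_c([0,\infty)\times\R^2)$ (not required to vanish on $\partial\Omega^k$). Multiplying $\pd_t\om^k+u^k\cdot\na\om^k=0$ by $\varphi$ and integrating on $[0,\infty)\times\Omega^k$, I rewrite $u^k\cdot\na\om^k=\div(\om^k u^k)$ using $\div u^k=0$ and integrate by parts; the boundary terms on $\partial\Omega^k$ vanish because $u^k\cdot\hat n=0$, while the contributions at $t=\infty$ and at spatial infinity vanish by the compact support of $\varphi$. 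The resulting identity is
\[
\int_0^\infty\!\!\int_{\R^2}E\om^k\,\pd_t\varphi\,dx\,dt+\int_0^\infty\!\!\int_{\R^2}(E\om^k)(Eu^k)\cdot\na\varphi\,dx\,dt+\int_{\R^2}E\om^k_{0}\,\varphi(0,\cdot)\,dx=0.
\]

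The two linear terms pass to the limit directly. The initial-data term uses $E\om^k_{0}\rightharpoonup\om_{0}$ weakly in $L^{p_{0}}(\R^2)$ paired against $\varphi(0,\cdot)\in C^\infty_c\subset L^{p_{0}'}(\R^2)$. The $\pd_t\varphi$ term uses the weak-$*$ convergence $E\om^k\rightharpoonup\om$ in $L^\infty(\R_+;L^q(\R^2))$ from Proposition~\ref{comp-vort} (valid for any $q\in[1,p_{0}]$), tested against $\pd_t\varphi\in L^1(\R_+;L^{q'}(\R^2))$, which is legitimate by the compact support of $\varphi$.

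The main obstacle is the nonlinear term, where I need to combine the strong convergence of the velocity with weak convergence of the vorticity at compatible integrability. Choose $q\in(2,p_{0}]$, which is possible exactly because $p_{0}>2$, and let $p:=q/(q-1)\in(1,2)$. Decompose
\[
(Eu^k)(E\om^k)-u\om=(Eu^k-u)E\om^k+u(E\om^k-\om).
\]
On the compact space-time support of $\na\varphi$, the first piece is estimated via H\"older by $\|Eu^k-u\|_{L^p}\|E\om^k\|_{L^q}$; the uniform bound \eqref{vorticity est} together with the strong convergence $Eu^k\to u$ in $L^2_{\loc}(\R_+;L^p_{\loc}(\R^2))$ from Theorem~\ref{theo : velocity}, combined with Cauchy--Schwarz in time, forces this to $0$. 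The second piece is handled by noting that $u\,\na\varphi\in L^1(\R_+;L^p(\R^2))$ with compact support, so the weak-$*$ convergence $E\om^k\rightharpoonup\om$ in $L^\infty_tL^q_x$ gives convergence of the pairing to $0$.

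For item (b), Theorem~\ref{theo : velocity} has already identified $u=K_{\R^2}[\om+\mu]$; since $K_{\R^2}[\cdot]=\na^\perp\bigl((2\pi)^{-1}\ln|\cdot|\ast\cdot\bigr)$, the identities $\div u=0$ and $\curl u=\om+\mu$ hold in $\mathcal D'(\R^2)$ for each $t$. The vanishing of $|u|$ at infinity follows from the $\mathcal O(1/|x|)$ decay of the planar Biot--Savart kernel evaluated outside the (compact) support of $\om(t,\cdot)+\mu$; equivalently, one may pass Proposition~\ref{velocity estimate} to the limit to get $u\in L^p+L^2+L^{p^*}$. The hypothesis $p_{0}>2$ is used decisively in the matching of exponents $p=q'$ above; all other limits are routine consequences of the convergence results established in the previous sections.
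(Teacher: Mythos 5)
Your argument is correct and follows the same route as the paper: write the weak formulation on $\Omega^k$, rewrite it via the zero-extensions $E\om^k$, $Eu^k$, and pass to the limit using the weak-$*$ convergence of the vorticity against the strong $L^2_{\loc}(\R_+;L^p_{\loc})$ convergence of the velocity, with item (b) read off from $u=K_{\R^2}[\om+\mu]$. The paper simply states that the limit passage is "easy" by weak--strong convergence, whereas you supply the exponent bookkeeping ($q\in(2,p_0]$, $p=q'$) that makes the nonlinear term work; this is exactly the intended argument, just spelled out.
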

\begin{proof} The second point of the definition is directly verified thanks to the explicit form of $u$:
\[ u(t,x) = K_{\R^2}[\om(t,\cdot)+\mu](x),\]
because $\mu$ is compactly supported, and $ K_{\R^2}[\om(t,\cdot)]$ belongs to $L^\infty(\R_+;W^{1,p}(\R^2))$ for some $p\in (2,p_{0})$ (by Hardy-Littlewood-Sobolev Theorem and Calder\'on-Zygmund inequality).

For any test function $\f\in C^\infty_0([0,\infty)\times \R^2)$, we have that $(u^k,\om^k)$ verifies \eqref{vort eq}:
\begin{eqnarray*}
\int_0^\infty\int_{\OM^k} (\om^k \partial_t \f + \om^k u^k \cdot \nabla \f) (t,x) \, dxdt + \int_{\OM^k}\f(0,x)\om_0^k(x)\ dx&=&0\\
\int_0^\infty\int_{\R^2} (E\om^k \partial_t \f + E\om^k Eu^k \cdot \nabla \f) (t,x) \, dxdt + \int_{\R^2}\f(0,x)\om_0^k(x) \mathds{1}_{\OM^k}\ dx&=&0.
\end{eqnarray*}
Therefore, we can pass easily to the limit thanks to the weak-strong convergence of the pair vorticity-velocity:
\[\int_0^\infty\int_{\R^2} (\om \partial_t \f + \om u \cdot \nabla \f) (t,x) \, dxdt + \int_{\R^2}\f(0,x)\om_0(x) \ dx=0,\]
which concludes the proof of the main theorem.
\end{proof}

\section{Final remarks and comments}\label{sect final}

\subsection{Fixed Dirac masses}\

As mentioned in the introduction, one application of our result is the approximation of an arbitrary compactly supported bounded Radon measure $\mu \in \mc{BM}_c(\R^2)$ by a square
grid discretization $\sum \g_i^k \d_{z_i^k}\rightharpoonup \mu$, where $z_i^k$ is the center of a square $\mc{C}_i^k$ (the side of the square has length $1/k$) and $\g_i^k:=\int_{\mc{C}_i^k} \varphi_{i}^k\, d\mu$ for $(\varphi_{i}^k)$ a partition of unity (with $\supp \varphi_{i}^k\subset z_{i}^k+[-1/k,1/k]^2$).

However, let us present another interesting example: the case where $\mu$ is a finite sum of Dirac masses.

For $\mu = \g \d_0$, we can consider the following setting:
\[ n_k=1, \ \Omega^{k}= \R^2\setminus \overline{B(0,\e_k)} \text{ and } \g^k_1=\g.\]
In this case, Theorem~\ref{main} is precisely the main result in \cite{ift_lop_euler}. Actually, by the uniqueness result of \cite{lac_miot}, we claim that for initial vorticity constant in a small neighborhood of $0$ (bounded and compactly supported), the limit holds for any sequence $\e_k\to 0$, without extraction of a subsequence.

\bigskip

More generally, if $\mu$ is a finite sum of Dirac: $\mu=\sum_{i=1}^n \g_i \d_{z_i}$, we can choose:
\[ n_k=n, \ \Omega^{k}:= \R^2\setminus (\cup_{i=1}^n \overline{B(z_i,\e_k)}) \text{ and } \g_i^k=\g_i,\]
for $\e_k< \inf_{i\neq j} |z_i-z_j|$. Therefore, we have proved that for any sequence $\e_k \to 0$, we can extract a subsequence such that the conclusion of Theorem~\ref{main} holds true, which is an extension of \cite{ift_lop_euler} to several disks. In the same way, \cite{lac_miot} states that, in the case of initial vorticity constant around each $z_i$ and belonging to $L^\infty_c(\R^2)$, the limit system has at most one solution, which implies that the convergences hold for all the sequence, without extracting a subsequence.

\bigskip

Concerning Dirac masses, another interesting consequence is the fusion of two Dirac masses. Considering $z_1^k$ and $z_2^k$ two sequences converging to the same point $z_0$, we note that
\[ \g_1 \d_{z_1^k} +\g_2 \d_{z_2^k} \rightharpoonup (\g_1+\g_2) \d_{z_0}.\]
Therefore, we can apply our theorem in this setting.

\subsection{Local uniqueness}\

Let us make a short comment about the uniqueness in case of vorticity compactly supported outside the measure $\mu$. In \cite{lac_miot}, the key to prove global uniqueness is to show that the vorticity never meets the support of $\mu$. Such an estimate appears to be challenging for our system, but we can already infer that we have a local uniqueness result for our system if $\om_0\in L^\infty_c(\R^2\setminus \supp \mu)$. Indeed, far from the support of $\mu$, the velocity is bounded, and as the vorticity is transported by the velocity, we state that there exists a time $T_0>0$ such that $\om(t,\cdot)$ is supported outside $\supp \mu$ for all $t\in [0,T_0]$. Then following \cite[Sect. 3]{lac_miot} we prove easily that the solution of the limit system (see point c) in Theorem~\ref{main}) is unique up to the time $T_0$.

\subsection{Flow around a curve}\label{sect:curve}\

Let $\Gamma: \Gamma(s),0\leq s\leq 1$ be a $C^2$-Jordan arc. The first author has proved in \cite{lac_euler} the existence of a global solution in the exterior of a curve. This solution is obtained by a compactness argument, on the solution in the exterior of a smooth thin obstacle shrinking to the curve. Actually, a formulation in the full plane was found: for $\om_0 \in L^\infty_c(\R^2\setminus \partial \Omega)$ and $\g\in \R$ given, there exists a pair $(u,\om)$
\[
u \in L^{\infty}_{\loc}(\R_+;L^2_{\loc}(\R^2)), \quad \om \in L^\infty(\R_+\times \R^2)
\]
verifying, in the sense of distributions, the system:
\begin{equation*}
\left\lbrace\begin{aligned}
&\pd_t \om+u\cdot \na \om=0 &\text{ in } \R^2\times (0,\infty) \\
&\div u =0 &\text{ in } \R^2\times (0,\infty) \\
&\curl u =\om +g_{\om}(s) \d_\Gamma &\text{ in } \R^2\times (0,\infty) \\
&\om(x,0)=\om_0(x) &\text{ in } \R^2
\end{aligned}\right.
\end{equation*}
where $\d_\Gamma$ is the Dirac delta on $\Gamma$, and $g_{\om}$ is explicitly given in terms of $\om$, $\g$ (which can be viewed as the initial circulation) and the shape of $\Gamma$. In fact $u$ is a vector field which is tangent to $\Gamma$ (with different values on each side of the curve), vanishing at infinity, with circulation around the curve $\Gamma$ equal to $\g$. This velocity is blowing up at the endpoints of the curve $\Gamma$ as the inverse of the square root of the distance and has a jump across $\Gamma$. Moreover $g_\om =(u_{down}-u_{up})\cdot \overrightarrow{\t}$. This result was extended in \cite{GV-Lac} for any $\om_0 \in L^q_c(\R^2)$, $q\in (2,\infty]$, and without assuming any regularity for the curve.

Moreover, for any solution of the above system in the case of $C^2$ curve and $\om_0 \in L^\infty_c(\R^2)$, it was established in \cite{lac-uni} that the solution is a renormalized solution in the sense of DiPerna-Lions, hence we have the following extra properties
\begin{itemize}
\item the $L^p$ norm of the vorticity is conserved, for any $p\in [1,\infty]$;
\item the circulation around $\Gamma$ is conserved ;
\item the vorticity is always compactly supported, but this support can grow ;
\item around the curve, the velocity at times $t>0$ belongs to $L^{p}_{\loc}(\R^2)\cap W^{1,r}_{\loc}(\R^2)$ only for $p<4$ and $r<4/3$. Actually we have
\[u\in L^{\infty}(\R_+; L^p(B(0,R))) \cap L^{\infty}([0,T]; L^s(\R^2)) \cap L^{\infty}(\R_+;W^{1,r}(B(0,R)))\]
for any $R>0$, $T>0$, $p\in [1,4)$, $s\in (2,4)$ and $r\in [1,4/3)$.
\end{itemize}
If we assume that $\om_0$ belongs to $L^\infty_c(\R^2\setminus \Gamma)$ and $(\om_0,\g)$ has a definite sign (namely, either $\om_0$ non-negative and $\g_0<-\int \om_0$ or $\om_0$ non-positive and $\g_0> -\int \om_0$) then the uniqueness of the global weak solution is the main result of \cite{lac-uni}. The crucial step therein is to prove that the sign condition implies that the vorticity never meets the boundary, which is the place where the velocity is not regular.

\bigskip

The natural question is to wonder if an infinite number of material points has the same effect as a material curve on the behavior of an inviscid flow. Let us consider $k$ disjoints balls uniformly distributed on the ``imaginary'' curve $\Gamma$, and we look for the limit as $k\to \infty$. To well constructed the initial data, we choose $\{\g_i^k\}$ such that $\sum_{i=1}^k \g_i^k z_i^k$ tends to the measure $g_{\om_0}(s)\delta_{\Gamma}$ when $k\to \infty$ (typically, we set $\g_i^k$ the sum of $g_{\om_0}$ around $z_i^k$). Therefore, our main theorem states that there exists $\varepsilon_k\to 0$ such that the Euler solution on $\R^2 \setminus (\cup_{i=1 \dots k} B(z_i^k,\varepsilon_k))$ converges to a pair $(u,\om)$ verifying, in the sense of distributions, the following system
\begin{equation*}
\left\lbrace\begin{aligned}
&\pd_t \om+u\cdot \na \om=0 &\text{ in } \R^2\times (0,\infty) \\
&\div u =0 &\text{ in } \R^2\times (0,\infty) \\
&\curl u =\om +g_{\om_0}(s) \d_\Gamma &\text{ in } \R^2\times (0,\infty) \\
&\om(x,0)=\om_0(x) &\text{ in } \R^2.
\end{aligned}\right.
\end{equation*}

The difference of the two systems lies in the third equation, where the density $g$ depends on $\om(t,\cdot)$ for the exterior of the curve whereas it is independent of time in the last system. Even if this difference seems tiny, it gives an important consequence on the behavior of the flow, that we comment now. In the exterior of a curve, the presence of the additional measure $g_{\om}(s) \d_\Gamma$ is mandatory to get the tangency condition, because $K_{\R^2}[\om]$ has no reason to be tangent. The only conserved quantity in this case is the global circulation, i.e. the sum of $g_{\om}$. But for $x\in \Gamma$, $g_{\om} (t,x)$ can be not constant, in order to counterbalance the normal part of $K_{\R^2}[\om(t,\cdot)](x)$. In the case of an infinite number of point, we have decompose the vortex sheet $g_{\om_0} \d_\Gamma$ by $k$ point vortices, but there is a principal consequence of this cut. In the case of an ideal flow, the circulation of the velocity around an obstacle is conserved, which means that the densities $\g_i^k$ of the points vortices are constant. Therefore, for any time, $\sum_i \g_i^k z_i^k$ approximate $g_{\om_0} \d_\Gamma$ instead of $g_{\om}(t,\cdot) \d_\Gamma$.

To conclude, we have carefully chosen $\g_i^k$ such that the limit velocity is initially tangent to the curve, but there is no reason that it remains tangent: $u=K_{\R_2}[\om(t,\cdot) + g_{\om_0}(s) \d_\Gamma]$ in this case whereas $u=K_{\R_2}[\om(t,\cdot) + g_{\om(t,\cdot)}(s) \d_\Gamma]$ in the case of the exterior of the curve. With the lack of the tangency condition, it appears possible that the vorticity $\om$ meets the curve in finite time, which would make it difficult to get the global uniqueness (even with a sign condition).

\subsection{Rate size of the disks/space between the disks}\label{sect:rate}\

We comment in this section the fact that $\varepsilon_k$ is constructed in the main theorem, and is not initially given. In our analysis, we have constructed a family $\varepsilon_k$ such that the convergence holds true, but it could mean that the size the obstacles tends more rapidly to $0$ than the number of balls $k$ tends to $\infty$. This can explain why, in the limit of a continuous curve, the effective flow may cross the curve.

An interesting question is to study the limit when $k\to \infty$ for a family $\varepsilon_k$ given. Such a question is the purpose in \cite{BLM,LM}: let us consider $k$ obstacles regularly distributed on the unit segment. We assume that the size of the obstacles is $\varepsilon_k$ and the minimal distance is $d_k$ ($\leq 1/k$). For $\om_0\in L^\infty_c(\R^2)$ given, we consider the Euler solution $u^k$ with initial vorticity $\om_0\vert_{\OM^k}$ and zero initial circulations. Theorem~\ref{main} states that there exists $\varepsilon_k\to 0$ such that $u^k$ convergences to the Euler solution in the full plane (here $\mu=0$), i.e. we do not feel at the limit the infinite number of material points. However, we do not control the rate $\varepsilon_k$ over $d_k$ (linked to $n_{k}$ and $\varepsilon_k$), which can be very small.

Conversely, let us fix $(\varepsilon_k)$ and $(d_{k})$ two sequences tending to zero and we study the limit $k\to \infty$. The main result in \cite{BLM} reads as follows: in this particular setting (zero initial circulations and uniformly distributed on a segment), then the limit $u^k$ converges to the Euler solution in the full plane if $d_{k}=\varepsilon_{k}^\alpha$ with $\alpha<2$. If we distribute the obstacles on the unit square, then $u^k$ converges to the Euler solution in the full plane if $\alpha<1$. The main idea in that paper is to look for the critical $\alpha$ such that the Step 1 in Subsection~\ref{sect : C} could give the convergence to zero in $L^2$ of $K_{\R^2}[f\vert_{\Omega^{\e}}]-Ev^\e[f]$. Next, the authors have remarked that the term $v^\e[f]-K^\e[f\vert_{\Omega^{\e}}]-\sum_{i=1}^n m_i^\e[f] X_i^\e$ is the Leray projection of $-(K_{\R^2}[f\vert_{\Omega^{\e}}]-Ev^\e[f])$, and they concluded by the orthogonality of this projection in $L^2$. This argument allows them to skip the analysis on the harmonic part (Step 2 in Section~\ref{sect : B}), which is the hardest part in this article. However, we aware that such an argument cannot work without assuming that all the circulations vanish. As recalled previously, if we assume that $\gamma_1^k$ is constant, then $\gamma_1\delta_{z_1}$ appears at the limit, which implies that the velocity behaves like $ \gamma_1\frac{(x-z_1)^\perp}{2\pi |x-z_1|^2}$ close to $z_1$, which does not belong in $L^2$. Therefore, $L^2$ estimates are not easy in our case, and it is not clear that the Leray projection is uniformly continuous in $L^p$ for $p<2$ (see a discussion in \cite[Section 5.3]{LM}). It explains why we need the Step 2 of Section~\ref{sect : B} ($L^p$ framework) in our case.

In \cite{LM}, the authors have completed the result of \cite{BLM}: they exhibit some regimes where the limit of $u^k$ is a solution to the Euler equations in the exterior of the unit segment or the unit square. That result relies again on an $L^2$ argument, and holds only when all the initial circulations are zero. When the obstacles are distributed on the segment, the impermeable wall appears if the distance $d_{k}$ is much smaller than the size $\varepsilon_{k}$, namely such that $\frac{d_{k}}{\varepsilon_{k}^3}\to 0$ if the obstacle is a disk (see \cite{LM} for more details).

\bigskip

\noindent
 {\bf Acknowledgments.}
 C.L. is partially supported by the Agence Nationale de la Recherche, Project DYFICOLTI grant ANR-13-BS01-0003-01, and Project IFSMACS grant ANR-15.
H.J.N.L.'s research was supported in part by CNPq grant \# 307918/2014-9 and FAPERJ project \# E-26/103.197/2012. M.C.L.F.'s work was partially funded by CNPq grant \# 306886/2014-6.
 This work has been supported by the R\'eseau Franco-Br\'esilien en Math\'ematiques and by the project PICS-CNRS ``FLAME'', both of which made several visits between the authors possible.


\def\cprime{$'$}

\end{document}